\numberwithin{equation}{section}
\newtheoremstyle{numberedstyle}% name
  {9pt}%      Space above, empty = `usual value'
  {9pt}%      Space below
  {\normalfont}% Body font
  {}%         Indent amount (empty = no indent, \parindent = para indent)
  {\bfseries}% Thm head font
  {.}%        Punctuation after thm head
  {\newline}% Space after thm head: \newline = linebreak
  {}%         Thm head spec
\theoremstyle{changebreak}\usepackage[english]{babel}
\newtheorem{thm}{Theorem}[section]%
\newtheorem{lem}[thm]{Lemma}%
\newtheorem{cor}[thm]{Corollary}%
\newtheorem{observation}[thm]{Observation}%
\newtheorem{lemdef}[thm]{Lemma and Definition}
\theoremstyle{numberedstyle}
\newtheorem{defn}[thm]{Definition}%
\def\good{good}
\def\controlledset{regular set}
\newcommand{\Orbit}{\mathcal{O}}
\renewcommand{\P}{\mathcal{P}}
\newcommand{\meas}{\operatorname{meas}}
\title[Absence of line fields and Ma{\~n}\'e's theorem]{%
     Absence of line fields and Ma{\~n}\'e's theorem \\ 
      for
      non-recurrent transcendental functions}
\author{Lasse Rempe}
\address{Dept.\ of Math.\ Sciences, University of Liverpool, Liverpool L69 7ZL, UK}
\email{l.rempe@liverpool.ac.uk}
\author{Sebastian van Strien}
\address{Mathematics Institute, University of Warwick, Coventry CV4 7AL, UK}
\email{strien@maths.warwick.ac.uk}
\thanks{This research is supported by EPSRC grant EP/E017886/1.}
\subjclass[2010]{37F10 (primary), 30D05, 37D25, 37F15, 37F35}
\newcommand{\Sing}{\operatorname{sing}}
\begin{document}

\begin{abstract}
 Let $f:\C\to\Ch$ be a transcendental meromorphic function. Suppose that
  the finite part $\P(f)\cap \C$ of the 
  postsingular set of $f$ is bounded, that $f$ has no
  recurrent critical points or wandering domains, and that
  the degree of pre-poles of $f$ is uniformly bounded.
  Then we show that $f$ supports no
  invariant line fields on its Julia set. 

 We prove this by generalizing two results about rational
  functions to the transcendental setting:
  a theorem of Ma{\~n}{\'e} \cite{mane} about 
  the branching of iterated preimages of disks, and a theorem of
  McMullen \cite[Theorem 3.17]{mcmullenrenormalization}
  regarding absence of
  invariant line fields for ``measurably transitive''
  functions. Both our theorems extend results previously obtained by
  Graczyk, Kotus and \'{S}wi\k{a}tek \cite{graczykkotusswiatek}. 
\end{abstract}

\maketitle

\section{Introduction}%
Let $f:\C\to\Ch$ be a nonconstant, nonlinear
 entire or meromorphic function. The existence of \emph{invariant line
 fields} (see Section \ref{sec:linefields})
 supported on the Julia set of $f$ is an important question which is 
 related to \emph{density of hyperbolicity}, one
 of the central problems in one-dimensional real and 
 holomorphic dynamics. It is conjectured that \emph{flexible
 Latt\`es maps} (see \cite{jacklattes}) are the only rational functions
 for which such line fields exist. 

In the setting of transcendental meromorphic functions $f:\C\to\Ch$, the
 situation is less clear. 
 Indeed, it is known \cite{alexmishaexamples} that there exist
 ``pathological'' entire transcendental 
 functions that support invariant line fields on their Julia sets.
 Also, hyperbolicity --- even structural stability --- need not be dense
 in parameter spaces of entire transcendental functions if the set 
 $S(f)\subset\Ch$ of
 \emph{singular values} of $f$ (that is, the closure of the set 
 $\Sing(f^{-1})$ of
 critical and asymptotic values) is too large. (We explicitly note that
 we include $\infty$ in $S(f)$ if it is a critical or asymptotic value.
 Often $S(f)$ denotes only the \emph{finite} singular values of $f$, but
 for the purposes of this article our convention is more convenient.) 
 It is not even
 clear how ``hyperbolicity'' should be defined when $S(f)\cap\C$ 
 is unbounded. 

 However, it is expected that such phenomena can be controlled
 when the functions in question are sufficiently tame, either
 function-theoretically or dynamically. For example,
 we believe that hyperbolicity is dense in the parameter spaces $M_f$ of
 entire functions with finitely many singular values that were defined
 in \cite{alexmisha}.

In this article, we will, on the other hand, prove the
 absence of invariant line fields for meromorphic functions under some
 strong dynamical conditions, but without making many
 function-theoretic assumptions. 
 Let us say that $f$ is \emph{nonrecurrent} if the finite part
  $\P(f)\cap \C$ of the postsingular set
    \[ \P(f) := \cl{\{f^n(s): s\in S(f), n\geq 0\}} \subset\Ch \]
  is bounded, and furthermore every critical point $c$ of $f$ is
  nonrecurrent; i.e. $c\notin \cl{\{f^n(c): n>0\}}$. 

\begin{thm}[Absence of line fields for nonrecurrent maps]
   \label{thm:nonrecurrent}
 Let $f:\C\to\Ch$ be a nonlinear and nonconstant meromorphic function. 
  Suppose that $f$
  is nonrecurrent and has no wandering domains, and that
  there is a bound on the order of all pre-poles of $f$.
  Then 
  the Julia set of $f$ supports no invariant line fields. 
\end{thm}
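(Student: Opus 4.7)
The strategy, signalled in the abstract, is to generalize two classical results to the present transcendental setting and then compose them: a Ma{\~n}{\'e}-type branching theorem for inverse orbits, and a McMullen-style criterion ruling out invariant line fields under the resulting measurable transitivity of the dynamics. First I would prove the Ma{\~n}{\'e} step: for every $w\in J(f)\cap\C$ outside $\cl{\P(f)}$ there exist a disk $U\ni w$ and a constant $D=D(w)$ such that, for every $n\geq 1$ and every connected component $V$ of $f^{-n}(U)$, the proper map $f^n\colon V\to U$ has degree at most $D$. Non-recurrence of critical points controls the ramification contributed by critical orbits; the boundedness of $\P(f)\cap\C$ confines the essential dynamics to a compact region in which Koebe-type distortion applies; and the uniform bound on pre-pole orders controls the ramification contributed by poles of $f^n$, which is the genuinely transcendental obstruction absent from the rational case.

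Given the Ma{\~n}{\'e} step, I would deduce absence of invariant line fields by a McMullen-style blow-up argument. Assume for contradiction that $\mu$ is a measurable invariant line field supported on a positive-area set $E\subset J(f)$. The no-wandering-domain hypothesis, combined with the boundedness of $\P(f)\cap\C$, implies that almost every orbit in $E$ accumulates in a compact subset of $J(f)\cap\C\setminus\cl{\P(f)}$, so one may select a Lebesgue density point $z_0\in E$ and an accumulation point $w$ of $\{f^n(z_0)\}_{n\geq 0}$ lying in $J(f)\cap\C\setminus\cl{\P(f)}$. Let $f^{n_k}(z_0)\to w$. For each $k$ the component $V_k\ni z_0$ of $f^{-n_k}(U)$ satisfies $\deg(f^{n_k}\colon V_k\to U)\leq D$, so Koebe-type distortion estimates give uniform control on $Df^{n_k}$ along $V_k$. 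The line field $\mu$ is almost the constant line $L_0=\mu(z_0)$ near the density point, and the invariance relation $\mu(f^{n_k}(z_0))=Df^{n_k}(z_0)\cdot\mu(z_0)$ forces $\mu(w)$ to coincide, for every large $k$, with the rotation of $L_0$ by $\arg Df^{n_k}(z_0)\pmod{\pi}$. A Lyubich-style variational argument, exploiting expansion along returning orbits in the Julia set, shows these angles cannot all agree modulo $\pi$, yielding the contradiction.

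The main obstacle is the Ma{\~n}{\'e} step itself. Ma{\~n}{\'e}'s original proof exploits the compactness of the Riemann sphere and the finiteness of $\deg f$, neither of which is available here: $f^{-1}(U)$ has infinitely many components, local degrees at pre-poles are a priori unbounded, and in iterated pullbacks degrees can compound. Following the approach of \cite{graczykkotusswiatek}, one would decompose each pullback orbit into sub-arcs lying either in a bounded region of $\C$, handled by a classical Ma{\~n}{\'e}-style argument using non-recurrence of the postsingular set, or in a neighborhood of infinity, where a logarithmic change of coordinates together with the uniform bound on pre-pole orders controls the local degree explicitly, and then telescope the two kinds of estimates along the orbit. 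A secondary but nontrivial technical step is confirming the existence of a density point $z_0$ with the required recurrence: the no-wandering-domain hypothesis rules out orbits trapped forever in the Fatou set, while the boundedness of $\P(f)\cap\C$ rules out measure-theoretically generic escape to infinity for orbits in $J(f)$.
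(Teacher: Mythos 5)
Your Ma\~n\'e step, as stated, is restricted to basepoints $w\in J(f)\cap\C$ lying \emph{outside} $\cl{\P(f)}$. But for such $w$ the conclusion is trivial and needs no non-recurrence hypothesis at all: a disk $U$ disjoint from $\P(f)$ contains no singular value of any iterate $f^n$, so every component of $f^{-n}(U)$ is mapped univalently onto $U$ and $D(w)=1$. The genuine content of Ma\~n\'e's theorem---and the reason Section~\ref{sec:mane} of the paper is long and delicate---is precisely to control branching of pullbacks of disks centered \emph{on or near} $\P(f)$. This cannot be avoided, because your companion claim (that the no-wandering-domain hypothesis and boundedness of $\P(f)\cap\C$ force almost every orbit in the support of the line field to accumulate in a compact subset of $J(f)\cap\C\setminus\cl{\P(f)}$) is false: there are bounded Julia orbits whose $\omega$-limit sets are entirely contained in $\cl{\P(f)}$, for instance when the postsingular set is large or dense in $J(f)$. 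The paper handles exactly this case via Theorem~\ref{thm:maneexplicit} applied to a non-parabolic limit point of the orbit, followed by Lemma~\ref{lem:pullbacks} to place the point in $J_r^{\Delta}(f)$.

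Second, your assertion that boundedness of $\P(f)\cap\C$ ``rules out measure-theoretically generic escape to infinity for orbits in $J(f)$'' is not correct, and this omission is fatal. For hyperbolic exponential maps $z\mapsto\lambda e^z$ the postsingular set is a compact subset of an attracting basin, yet $J(f)$ has Hausdorff dimension two and $I(f)$ has full measure within it. The paper therefore must, and does, prove a separate and substantial statement (Theorem~\ref{thm:nolinefieldsescaping}): no invariant line field lives on $I(f)$. That argument combines the external result of \cite{boettcher} on orbits escaping through logarithmic tracts with the hypothesis that pre-pole orders are uniformly bounded (used to place the remaining escaping orbits into $J_r^{\Delta}(f)$), and it is precisely where the bound on pre-pole orders enters. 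The correct decomposition underlying the proof is $J(f)=J_r^{\Delta}(f)\cup I(f)\cup(\text{countable set})$, with each piece treated separately; your proposed argument addresses only the first piece, and only the easy part of it.
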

\begin{remark}
  If $f$ as in the statement of the
   theorem had a wandering domain, then the set of limit points of the
   iterates on this domain would have to be bounded. It is an open question
   whether an entire or meromorphic function can have a wandering domain
   with this property (see \cite[Problem 2.87]{haymanlist} and 
   \cite[Question 8]{waltermero}). 

  We also remark that the assumption on wandering domains 
   in Theorem \ref{thm:nonrecurrent} can be weakened somewhat.
   E.g.\ it is enough to assume that the grand orbit of every wandering
   domain contains at most one singular orbit 
   (see Theorem \ref{thm:nonrecurrent2} for details).
\end{remark}

There are two main ingredients in the proof of this theorem. The
 first is a generalization of Ma\~n{\'e}'s theorem 
 \cite{mane} regarding
 pullbacks of disks that are disjoint from the $\omega$-limit sets
 of recurrent critical values. 
 In particular, we prove the following.

\begin{thm}[Hyperbolic sets] \label{thm:manemain}
 Let $f:\C\to\Ch$ be a nonlinear and nonconstant meromorphic function,
  and suppose that $\P(f)\cap\C$ is bounded. 

 Furthermore, let $K\subset\C$ 
   be a compact subset of the Julia set $J(f)$ with the following
  properties: 
  \begin{enumerate}
    \item $K$ is forward-invariant, i.e.\ $f(K)\subset K$;
    \item $K$ contains neither critical points nor 
               parabolic periodic points;
    \item $K$ does not intersect the $\omega$-limit set of any
       recurrent critical points, or of any singular values
       contained in wandering domains.
  \end{enumerate}
 Then $K$ is a hyperbolic set.
\end{thm}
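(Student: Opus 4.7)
\medskip\noindent\textbf{Proof plan.} The plan is to adapt Ma\~n\'e's original argument for rational maps \cite{mane} to the meromorphic setting, with extra care for pre-poles and non-recurrent critical orbits. Exploiting the boundedness of $\P(f)\cap\C$, I would first fix $R>0$ with $K\cup(\P(f)\cap\C)\subset B(0,R)$, and let $L$ denote the union of the $\omega$-limit sets of the recurrent critical points, of the $\omega$-limit sets of singular values lying in wandering domains, and of the parabolic periodic cycles. By hypothesis $K\cap L=\emptyset$; as both sets are compact, one obtains a bounded open neighbourhood $U\supset K$ whose closure is disjoint from $L$, from $\Crit(f)$, and from $f^{-1}(\infty)$.

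The technical heart of the argument is the \emph{bounded-degree pullback} statement: there exist $r_0>0$ and $D_0<\infty$ such that for every $x\in K$, every $0<r\le r_0$, every $n\ge 1$, and every connected component $W$ of $f^{-n}(B(x,r))$ meeting $U$, the branched covering $f^n\colon W\to B(x,r)$ has degree at most $D_0$. By Riemann--Hurwitz this is controlled by the total multiplicity of critical points of $f^n$ in $W$; each such critical point is an iterated preimage of a critical point of $f$ or of a pole. I would bound the contributions via three counts: (i) the finite set of critical points of $f$ whose forward orbits ever return to $U$ (using $K\cap L=\emptyset$ together with non-recurrence, so that tails of the relevant orbits stay outside $U$); (ii) for each such critical point, the finite number of returns of its forward orbit to a small disk $B(x,r)$ (again from disjointness of $\omega$-limit sets); and (iii) the uniform bound on pole orders, which caps the local multiplicity contribution of each pre-pole encountered in the pullback chain.

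Given the bounded-degree statement, a Koebe-type distortion estimate for proper branched coverings of bounded degree yields uniform distortion control for $f^n\colon W\to B(x,r)$. Because the pullbacks $W$ meeting $U$ lie in a bounded region of $\C$, their total Euclidean area is finite; since $x\in J(f)$ forces the number of distinct pullbacks to grow without bound as $n\to\infty$, the individual pullback diameters must shrink to $0$ uniformly in $x$ and $n$. Combining uniform shrinking with bounded distortion then yields constants $C>0$ and $\lambda>1$ with $|(f^n)'(x)|\ge C\lambda^n$ for all $x\in K$, measured in any conformal metric comparable to the Euclidean one on $\cl{U}$---which is exactly the definition of a hyperbolic set.

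The principal obstacle is the bounded-degree assertion. In the rational case the analogous bound follows from a clean compactness argument on the Riemann sphere; here one must additionally track pre-poles, whose orbits escape to $\infty$ and return near $\P(f)\cap\C$, as well as non-recurrent critical points in $J(f)$ whose $\omega$-limit sets are required only to avoid $K$ and not all of $U$. I expect the paper's notion of a \controlledset{} to isolate precisely the geometric hypotheses needed to carry out the three counts above, after which the remainder of the argument proceeds essentially as in the rational setting.
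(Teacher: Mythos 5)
Your high-level strategy (bound the branching of pullbacks, then combine with a Koebe-type distortion argument and shrinking to extract uniform expansion) is morally the same as the paper's, but the execution has two genuine gaps that the paper's argument is specifically designed to avoid.

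First, the bounded-degree pullback claim as you state it is both too strong and not provable by the Riemann--Hurwitz accounting you propose. You restrict only the final component $W$ to meet $U$; the intermediate images $f^j(W)$ for $0<j<n$ are unconstrained, so they may be unbounded (they can sweep past $\infty$ near a transcendental singularity), may fail to be simply connected, and $f^n\colon W\to B(x,r)$ need not even be a proper map if the pullback passes near an asymptotic value --- in which case Riemann--Hurwitz is not available and there is no ``total multiplicity of critical points'' to count. This is exactly the issue the paper flags: in the Euclidean framework the shrinking principle (II) already fails for explicit functions such as $f(z)=1/\cos\sqrt z$, and pullbacks can escape any bounded set and return. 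The paper's fix (following Tan Lei--Shishikura) is to control only pullbacks that meet the postsingular set $\P(f)$ at every intermediate step --- not just at the end --- and to measure ``small'' in the hyperbolic metric of a backward-invariant domain $\Omega$ whose complement contains a dense forward-invariant set. In that metric, bounded degree plus bounded hyperbolic diameter of $W$ automatically propagates backward (Lemma~\ref{lem:diam}), which is precisely where the Euclidean area argument breaks down. Pullbacks of $K$-orbits that do not follow $\P(f)$ are then treated separately by Lemma~\ref{lem:pullbacks}, using that $K$ is bounded.

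Second, the shrinking-via-area step does not work as written: pullback components at different times $n$ overlap one another, so the total Euclidean area of the family $\{W\}_{n\ge1}$ is not a sum of disjoint areas, and in the transcendental setting it is not even clear that the number of components meeting $U$ grows without bound with $n$ (most preimages may lie far from $U$). The paper avoids this by deducing shrinking from the degree bound in the hyperbolic metric, then converting to the Euclidean metric only inside a bounded disk. You should also note that the hypothesis (3) only excludes accumulation by singular values in wandering domains; asymptotic values escaping through Baker domains require separate treatment (see the case analysis in the proof of Theorem~\ref{thm:maneexplicit}), and your proposal does not address them. Finally, the paper derives hyperbolicity of $K$ not directly from $|(f^n)'|$ estimates but by first showing $K\subset J_r(f)$ and then invoking Lemma~\ref{lem:hyperbolicsets}; your direct route could in principle be made to work once the pullback control is in place, but it is cleaner to factor through the radial Julia set as the paper does.
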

\begin{remark}[Remark 1]
  Recall that a \emph{hyperbolic set} is a compact, forward invariant
  set $K\subset\C$ such that, for some $\eta>1$ and some 
  $k\in\N$, we have $|(f^k)'(z)|>\eta$ for all $z\in K$.
\end{remark}
\begin{remark}[Remark 2] 
 It is clearly necessary to make some restrictions about 
  unbounded singular orbits; consider e.g.\ the case of an entire
  function that
  has a non-linearizable irrationally indifferent fixed point but 
  no critical points.  
  However, the assumption
  that $\P(f)\cap\C$ is bounded is stronger that what we need;
  we use it merely for convenience of statement.
  Compare Theorem \ref{thm:maneexplicit}.
\end{remark}

The second main result concerns the absence of invariant line fields
 for a large class of ``dynamically tame'' transcendental 
 meromorphic functions. This theorem generalizes a result of McMullen
  \cite[Theorem 3.17]{mcmullenrenormalization}.
%%% CHANGE: reference removed here and added with more detail later.
%%%  see also \cite{martinmayer}. 
 It 
 will be used in a subsequent
 paper \cite{lassesebastiandensity} to establish density of
 hyperbolicity in parameter spaces of certain \emph{real} entire 
 transcendental functions with finitely many singular values. 
 (This is the original motivation behind our study.) 

To state the result. 
 let us say that $f$ is \emph{measurably transitive} if the 
 radial Julia set $J_r(f)$ (see Section \ref{sec:ergodic}) has
 positive measure. Roughly speaking, this means that it is possible
 to pass from small scales to large scales by a univalent iterate
 near any point in $\C$, which is a natural hypothesis in our setting.
 If $f$ is measurably transitive, then $J(f)=\C$ and 
 $f$ is ergodic on the Julia set; see Theorem
 \ref{thm:ergodic} below.

We note that all functions 
  with $\limsup \dist^{\#}(f^n(z),\P(f))>0$ for a positive
  measure set of $z\in J(f)$ (where $\dist^{\#}$ denotes spherical
  distance) are measurably transitive.

\begin{thm}[Absence of line fields] \label{thm:mainlinefields}
 Let $f:\C\to\Ch$ be a nonconstant, nonlinear meromorphic
  function.

 Suppose that $f$ is measurably transitive, and that $f$ is not
  a Latt\`es map. Then $f$ supports no invariant line field on its
  Julia set.
\end{thm}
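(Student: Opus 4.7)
The plan is to adapt McMullen's proof of the analogous statement for rational maps \cite[Theorem 3.17]{mcmullenrenormalization} to the transcendental setting. Suppose for contradiction that $\mu$ is an $f$-invariant measurable line field supported on a positive-measure subset of $J(f)$. By measurable transitivity and the ergodicity conclusion of Theorem \ref{thm:ergodic}, we have $J(f)=\C$, the radial Julia set $J_r(f)$ has full Lebesgue measure, and $\operatorname{supp}(\mu)\cap J_r(f)$ has positive measure. Pick a point $x_0$ in this intersection that is simultaneously a Lebesgue density point of $\operatorname{supp}(\mu)$ and an almost-continuity point for the line field $\mu$.

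By the defining property of $J_r(f)$, there are univalent inverse branches $\phi_k$ of $f^{n_k}$ defined on round disks $B(y_k,2\delta)$ of fixed radius about $y_k:=f^{n_k}(x_0)$, with $\phi_k(y_k)=x_0$ and $\operatorname{diam}\phi_k(B(y_k,\delta))\to 0$. Koebe's distortion theorem gives uniform bounded distortion of the $\phi_k$ on $B(y_k,\delta)$. After affine pre- and post-composition, the $\phi_k$ form a normal family of univalent maps from the unit disk into $\C$, with univalent subsequential limits. By $f$-invariance, the pull-backs $\phi_k^{*}\mu$ agree almost everywhere with $\mu|_{B(y_k,\delta)}$, and since $x_0$ is an almost-continuity and density point, these normalized pull-backs converge in measure to a \emph{constant} line field on the limit disk. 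Extracting a further subsequence so that $y_k\to y_\infty\in\Ch$ (working in a spherical chart at $y_\infty$, and using the uniform bound on pre-pole orders in the case $y_\infty=\infty$), we conclude that $\mu$ agrees almost everywhere, on an open disk $U$ about $y_\infty$, with the line field of a nonvanishing holomorphic function; in other words, $\mu$ is \emph{holomorphic} on $U$.

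To finish, we propagate this local holomorphicity using the dynamics. Pulling back the holomorphic quadratic differential defining $\mu$ on $U$ along univalent inverse branches of the iterates of $f$ equips the grand orbit of $U$ with a holomorphic quadratic differential $q$ satisfying the functional equation $q(f(z))(f'(z))^{2}=q(z)$; by ergodicity, this grand orbit has full measure in $\C$. In the rational case, the classical rigidity argument shows $q$ extends to an $f$-invariant meromorphic quadratic differential on $\Ch$, which forces $f$ to be a Latt\`es map, contradicting the hypothesis. In the transcendental meromorphic case one must instead exploit the growth and singular-set structure of $f$ near $\infty$ to rule out the existence of any nonzero meromorphic solution $q$ of the functional equation on all of $\C$. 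This final rigidity step is the main obstacle of the proof and is where the transcendental generalization requires genuine new input beyond McMullen's original rational-case argument.
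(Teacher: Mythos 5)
Your first paragraph reproduces the first step of the paper's proof (for which the paper simply cites \cite[Theorem 5.16]{mcmullenrenormalization}): choose a density point $z\in J_r(f)$ of the line field, use Lemma~\ref{lem:pullbacksshrink} together with Koebe distortion and a compactness argument, and conclude that the line field agrees a.e.\ with a univalent line field on some disk $D$ meeting $J(f)=\C$. The caveat about ``using the uniform bound on pre-pole orders in the case $y_\infty=\infty$'' imports a hypothesis from Theorem~\ref{thm:nonrecurrent} that plays no role here; since $J_r(f)$ is defined with spherical disks (Definition~\ref{defn:radialjulia}), the possibility $y_\infty=\infty$ is handled without any assumption on poles.

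The proposal breaks down in the final paragraph. You correctly identify that the transcendental case needs input beyond McMullen's rational rigidity, but you then stop: writing that ``this final rigidity step is the main obstacle of the proof and is where the transcendental generalization requires genuine new input'' acknowledges the difficulty rather than resolving it. The plan you sketch is also not quite the right shape. Pulling back the univalent line field through a critical point produces a one-pronged singularity (Observation~\ref{obs:pullbacks}), so what you obtain is not a global holomorphic quadratic differential on a full-measure open set; and the desired conclusion is not that ``no nonzero meromorphic solution $q$ of the functional equation exists on $\C$'' but rather that any $f$ admitting a univalent invariant line field near $J(f)$ must be a power map, a Chebyshev map, or a Latt\`es map --- in particular, rational. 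The paper's Theorem~\ref{thm:nounivalentlinefields} reaches this via three ideas your sketch does not touch: (i) Nevanlinna's theorem on completely branched values (Theorem~\ref{thm:nevanlinna}) bounds the set where the line field fails to be univalent by the set $E_B(f)$ of branched exceptional values, which has at most four elements (Lemma~\ref{lem:unbranched}) --- this is the finiteness fact that replaces the trivial finite-exceptional-set statement for rational maps; (ii) this finiteness exhibits $\Ch$ minus the Fatou-exceptional set as a parabolic orbifold with only simple branch points, with $f$ an orbifold self-map whose universal cover must be $\C$; (iii) to lift $f$ to an affine self-map of $\C$, one must show $f$ has no asymptotic values in the relevant domain, which is done via the ``leafwise asymptotic value'' argument exploiting the horizontal and vertical foliations of the univalent line field near $\infty$. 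Without these steps --- especially (i) and (iii), which are the genuinely transcendental content --- the argument does not close.
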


Using Theorem \ref{thm:manemain} and a result of
 Bock \cite{bockthesis} (see Theorem \ref{thm:ergodic}), it follows
 that a function satisfying the hypotheses of Theorem
 \ref{thm:nonrecurrent} either is measurably transitive, or 
 almost every orbit accumulates at $\infty$.
 Hence 
 Theorem \ref{thm:nonrecurrent} will be proved
 in Section \ref{sec:proofs} by showing that the latter set
 (and, in particular, the set of \emph{escaping points}, which converge
  to infinity under iteration)
 also does not support invariant line fields.

 Graczyk, Kotus and \'{S}wi\k{a}tek
  \cite{graczykkotusswiatek} proved Theorem
  \ref{thm:manemain} under the additional assumptions that
  $\P(f)$ contains no critical points and that $J(f)=\C$.
  They also proved Theorem \ref{thm:nonrecurrent}
  for such functions, assuming that $\infty\notin\P(f)$ as well as
  an additional technical condition. 

Our proof of Theorem \ref{thm:manemain} follows quite closely the account
 of Ma\~n{\'e}'s theorem given by Tan Lei and Shishikura
 \cite{tanleishishikura}, although additional care is required to deal
 with the transcendental case. Likewise, the proof of
 Theorem \ref{thm:mainlinefields} follows McMullen's general strategy,
 but contains also some other ingredients: we use Nevanlinna's theorem on
 completely branched values, and also need to
 develop an argument to deal
 with the singularity at infinity for a transcendental meromorphic function. 
 Martin and Mayer \cite{martinmayer} have given an alternative 
 proof of Theorem \ref{thm:mainlinefields} for rational functions that can
 also be applied to transcendental entire functions. 
 This argument
 is extended in \cite{mayerrempe} to give an alternative proof of 
 Theorem~\ref{thm:mainlinefields} that also applies to Epstein's more
 general class of ``Ahlfors islands maps''.

We should mention that there are a number of other results regarding
 the absence of
 invariant line fields for transcendental entire and meromorphic
 functions which go in a somewhat different direction. In particular,
 in \cite{urbanskizdunik4}, the absence of invariant line fields is
 shown for exponential maps that satisfy a type of Collet-Eckmann
 condition. The article \cite{boettcher} establishes that an invariant
 line field for any meromorphic function cannot be supported on the
 set of points that escape to infinity through logarithmic singularities.
 (This result will be used in the proof of Theorem \ref{thm:nonrecurrent}
 above.) 

Also, our version of Ma\~n\'e's theorem has connections to
  Kisaka's study \cite{kisakasemihyperbolic} of \emph{semi-hyperbolicity} 
 (which requires that \emph{all} pullbacks of sufficiently small disks
  have only a bounded amount
  of branching, which e.g.\ excludes the existence of asymptotic values);
  see also \cite{waltermorosawa}. Okuyama 
  \cite[Theorem 5.1]{okuyamalinearization} has a certain version of 
  Ma{\~n}\'e's theorem for 
  functions with finitely many critical points and asymptotic values,
%%% CHANGE: Added reference to Kotus and Urbanski. 
  while Kotus and Urba\'nski \cite{kotusurbanskielliptic} have previously
  treated the case of elliptic functions.

\subsection*{Structure of the article}
 We begin by proving our version of
  Ma{\~n}\'e's theorem in Section \ref{sec:mane}. 
  Sections \ref{sec:ergodic} to \ref{sec:exceptional} discuss material
  that is not entirely new, but for which we know of no reference that
  presents it as we require. More precisely, Section \ref{sec:ergodic} 
  discusses radial Julia sets for general transcendental entire or
  meromorphic functions, which are important in the
  study of measurable dynamics of such functions. We also
  prove two basic results regarding these sets.  
  Section \ref{sec:linefields} reviews 
  facts about (univalent) line fields.
  In Section \ref{sec:exceptional}, we introduce and discuss the 
  concept of ``branched exceptional'' values, which may be
  of independent interest and does not seem to appear explicitly
  in the literature. The main work for Theorem
  \ref{thm:mainlinefields} is done in Section
  \ref{sec:univalent}, where we classify all meromorphic functions
  that support univalent line fields near their Julia sets. We remark
  that the discussion of Ma\~n\'e's theorem and radial Julia sets in
  Sections \ref{sec:mane} and \ref{sec:ergodic}, on the one hand, and
  of invariant line fields in 
  Sections \ref{sec:linefields} to
%%% CHANGE: Section -> Sections. 
  \ref{sec:univalent}, on the other, can be read quite independently from
  each other. 
  Finally, we tie
  up the loose strings in Section \ref{sec:proofs}.

\subsection*{Basic notation}
 We denote the complex plane by $\C$ and the
  Riemann sphere by $\Ch = \C\cup\{\infty\}$. We use a number of
  different notions of distance in this article: Euclidean distance is
  denoted $\dist$, spherical distance is denoted
  $\dist^{\#}$, and hyperbolic distance in some open set $U\subset\C$ is
  denoted $\dist_U$. We use the analogous notation for diameters;
  e.g.\ $\diam_U(X)$ denotes the diameter of $X$ in the hyperbolic 
  metric of $U$. The euclidean disk of radius 
  $\eps$ around $z$ is denoted $\D_{\eps}(z)$, while the
  corresponding spherical disk is $\D^{\#}_{\eps}(z)$.

 Throughout this article $f:\C\to\Ch$ will be a nonconstant,
  nonlinear meromorphic function, unless explicitly stated otherwise. 
  As usual, 
  the Julia and Fatou sets of $f$ are
  denoted by $J(f)$ and $F(f)$, respectively. If $z\in\C$ and
  $U\subset\Ch$ with $f^n(z)\in U$, then the
  \emph{pullback} of $U$ along the orbit of $z$ is the 
  component $W$ of $f^{-n}(U)$ containing $z$. This pullback is called
  \emph{univalent} if $f:W\to U$ is univalent. 
%%% CHANGE: Added definition of omega-limit set.
  The \emph{$\omega$-limit set} of a point $z\in\C$ is 
  the
  accumulation set of the forward orbit of $z$ under $f$. 

 We frequently deal with open covers of a
  compact set $K\subset \C$.
  The \emph{Lebesgue covering number} of such a 
  covering is
  the largest number $\delta$ such that the disk of radius $\delta$ around
  any point of $K$ is 
  contained in some element of the open cover in question
  \cite[Theorem 26]{kelleytopology}. 

 We use the following notation to conclude proofs and results: 
  $\proofsymbol$ denotes the end of a proof, $\noproofsymbol$ indicates
  a result cited without proof, and $\subproofsymbol$ completes the proof
  of an auxiliary step within a larger argument. 

\subsection*{Acknowledgments}
  We thank Walter Bergweiler, Adam Epstein, Janina Kotus, Volker Mayer,
   Phil Rippon and Mariusz Urba\'nski for interesting
   discussions.

\section{Ma\~n{\'e}'s theorem} \label{sec:mane}

 In this section, we do the main work for the proof of Theorem 
  \ref{thm:manemain}. More precisely, we establish the ``point version''
  of this theorem (Theorem \ref{thm:manegeneral}). In the rational case,
  this 
  states that a sufficiently small disk around any point of the compact set
  $K$ can be pulled back along arbitrary backwards orbits with only a 
  bounded amount of branching. The 
  ``compact set version'' of our theorem,
  stated in the introduction, is deduced from the point version 
  in
  the following section, as a special case of an observation about 
  radial Julia sets.

 To motivate what follows, we recall that the proof of Ma\~n\'e's 
  theorem essentially relies on the following two principles
  (compare \cite[Lemmas 1 and 2]{mane}).
  \begin{enumerate}
   \item[(I)] Suppose we are given a sequence 
   $V_0\leftarrow V_1 \leftarrow V_2 \leftarrow \dots \leftarrow V_n$ of 
   pullbacks of a 
   disk $V_0$ intersecting the Julia set but not intersecting any 
   recurrent 
   critical orbits. 
   Suppose furthermore that the $V_j$ are ``small'' for $j<n$. 
   Then this pullback passes through
   every non-recurrent critical point at most once, and hence the
   degree of such a pullback is at most the product over the degrees of
   non-recurrent critical points.
  \item[(II)] If a larger disk $\tilde{V_0}$ can be pulled back along the same
   orbit as $V_0$ with a bounded amount of branching, then the 
   pullback of $V_0$ will be ``small''.
  \end{enumerate}
 In the rational case, both these facts are true for any reasonable
  meaning of the word ``small'', e.g.\ in the spherical metric,
  but in the transcendental case, where the
  domain of definition of $f$ is noncompact, things
  are more complicated. 
  For example, if we interpret ``small'' in the Euclidean metric,
  then there is no reason for (II) to be  true, and indeed it can
  be checked that (II) fails e.g.\ for the functions $f(z)=1/\cos(\sqrt{z})$. 
  (Consider preimages of a disk $D_{r/2}(r)$,
   where $r>0$ is sufficiently small.) On the other hand, with respect to the
   spherical metric, there 
   are %%% CHANGE: is -> are
   a number of reasons for 
   (I) to fail, for example if the pullback passes 
   through transcendental singularities, critical points of arbitrarily high
   period, etc. 

  One might think that it is not necessary to consider
   unbounded pullbacks if we are only interested in bounded backward
   orbits. However, this is not true. It is useful to keep in
   mind the case of an exponential map $z\mapsto \exp(z)+\kappa$
   with a bounded Siegel disk $U$. In this case, $K:=\partial U$ is a compact,
   forward invariant set, but $K$ is not a hyperbolic set. Hence any proof 
   of Ma\'n\'e's theorem will break down in this case, precisely
   because it may become necessary to follow pullbacks along the (unbounded)
   singular orbit. In fact, our results imply that the singular value
   is recurrent in this setting; compare Corollary \ref{cor:certainsiegel} 
   below. 

  The main insight we
   need in order to deal with these issues effectively 
   is due to Tan Lei and Shishikura
   \cite{tanleishishikura}: they interpret ``small'' above as
   having bounded diameter in the hyperbolic metric of some suitable
   backwards-invariant open set $\Omega$. Then (II) 
    %%% CHANGE: removed ``above'' here to avoid repetition. 
   is 
   immediate (see Lemma \ref{lem:diam}), 
   and (I) can be dealt with (given suitable hypotheses) by
   only considering pullbacks that follow the postsingular set, as in the
   following definition.

 \begin{defn}[Regular points] \label{defn:regularity}
  We say that a point $z_0\in \C$ is \emph{regular} if there are
   $\delta>0$ and $\Delta\in\N$ with the following property:
   if $n\in\N$ and $U$ is a connected component of 
   $f^{-n}(\D_{\delta}(z_0))$ with $U\cap \P(f)\neq\emptyset$, then $U$
   is simply connected and $f^n:U\to \D_{\delta}(z_0)$ is a 
   branched covering of degree at most $\Delta$. 
   (If we want to be more specific,
   we also say that $z_0$ is \emph{$\Delta$-regular}.)
\end{defn}
 \begin{remark}
  This differs from the definition of regularity 
   in \cite{graczykkotusswiatek}, where
   it was required that the map is univalent; i.e. $\Delta=1$.
 \end{remark}

 Our goal now is to prove regularity of $z_0$ under suitable assumptions
  (e.g., when $z_0$ belongs to a set $K$ as in the statement of
   Theorem \ref{thm:manemain}).   
   We follow the ideas of \cite{tanleishishikura} quite closely,
   although our presentation differs somewhat.
   In particular, we use the following
   simple lemma from \cite{tanleishishikura}.

 \begin{lem}[Preimages under maps of bounded degree]
    \label{lem:diam}
  Let $N\in\N$, and let $0<\eta<1$. Then there is a constant
   $C(N,\eta)$ with the following property. Let $z_0\in\C$ and $\delta>0$.
   Suppose that $U\subset\C$
   is simply connected, and let 
    \[ g: U \to \D_{\delta}(z_0)\]
   be a proper map of degree at most $N$. Then every component
   of $g^{-1}(\D_{\eta\cdot\delta}(z_0))$ 
   has diameter at most $C(N,\eta)$ in the
   hyperbolic metric of $U$. 

  As $\eta\to 0$ for fixed $N$, the constant $C(N,\eta)$ also tends to $0$. 
    \qedd
 \end{lem}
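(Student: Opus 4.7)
My plan is to reduce to a uniform estimate for Blaschke products and then argue by normal families. Since $U$ admits a proper holomorphic map onto the bounded disk $\D_\delta(z_0)$, $U$ is not all of $\C$, so the Riemann mapping theorem provides a conformal isomorphism $\varphi\colon\D\to U$; the hyperbolic metric of $U$ pulls back to the Poincar\'e metric of $\D$. Post-composing $g\circ\varphi$ with the affine map $w\mapsto(w-z_0)/\delta$ turns it into a proper holomorphic map $B\colon\D\to\D$ of degree at most $N$, i.e.\ a finite Blaschke product; components of $g^{-1}(\D_{\eta\delta}(z_0))$ correspond to components of $B^{-1}(\D_\eta)$. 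It therefore suffices to prove the uniform statement: for every $N\in\N$ and $\eta\in(0,1)$ there is a constant $C(N,\eta)>0$, tending to $0$ as $\eta\to 0$ for fixed $N$, such that every component of $B^{-1}(\D_\eta)$ has Poincar\'e diameter at most $C(N,\eta)$ in $\D$, uniformly over all Blaschke products $B$ of degree $\leq N$.

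I would then argue by contradiction using Montel's theorem. Suppose there are Blaschke products $B_k$ of degrees $d_k\leq N$ and components $V_k$ of $B_k^{-1}(\D_\eta)$ whose Poincar\'e diameter tends to infinity. Precomposing each $B_k$ with an automorphism of $\D$ (a Poincar\'e isometry that preserves Blaschke products of degree $\leq N$), I may assume $0\in V_k$; then $|B_k(0)|<\eta$ and there are $w_k\in V_k$ with $|w_k|\to 1$. A subsequence of $B_k$ converges locally uniformly to some $B_\infty\colon\D\to\overline{\D}$. Using the explicit factorization $B_k(w)=c_k\prod_{j=1}^{d_k}(w-a_j^k)/(1-\overline{a_j^k}w)$, together with the fact that $(w-a)/(1-\overline{a}w)$ converges to a unimodular constant on compacts of $\D$ as $a\to\partial\D$, the limit $B_\infty$ is, up to a unimodular constant, a Blaschke product of some degree $d_\infty\leq N$; the constraint $|B_\infty(0)|\leq\eta<1$ prevents all zeros from escaping, so $d_\infty\geq 1$ and $B_\infty\colon\D\to\D$ is proper. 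Then $B_\infty^{-1}(\overline{\D_\eta})$ is relatively compact in $\D$, contained in some disk $\D_r$ with $r<1$, and $|B_\infty|>\eta$ on a surrounding annulus $\{r\leq|w|\leq r'\}$; by local uniform convergence the same holds for $B_k$ for all large $k$, so $V_k\subset\D_r$, contradicting $|w_k|\to 1$.

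For the decay $C(N,\eta)\to 0$ as $\eta\to 0$, I would run the same compactness machinery with a sequence $\eta_k\to 0$ in place of $\eta$: by the first part the $V_k$ are uniformly confined to some $\D_R$ with $R<1$, so after extracting subsequences they Hausdorff-converge to a continuum $V_\infty\subset\overline{\D_R}$ of positive Euclidean diameter, on which the limiting nondegenerate Blaschke product $B_\infty$ must vanish identically (since $|B_k|<\eta_k\to 0$ on $V_k$), contradicting the discreteness of zeros of a nonconstant holomorphic function. The main obstacle throughout is keeping the limit Blaschke product nondegenerate when some zeros of $B_k$ escape to $\partial\D$, because the space of Blaschke products of degree $\leq N$ is not itself compact; this is precisely what the explicit factorization combined with the lower bound on $|B_\infty(0)|$ resolves.
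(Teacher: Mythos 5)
The paper does not prove this lemma; it is stated with the ``cited without proof'' symbol and attributed to Tan Lei and Shishikura, so there is no in-text argument to compare against. Your proof is correct. The reduction to Blaschke products via a Riemann map is sound (the image being a bounded disk forces $U\neq\C$), and the compactness argument works: after precomposing with a M\"obius automorphism so that $0\in V_k$, a subsequential locally uniform limit $B_\infty$ exists by Montel, the explicit factorisation $B_k=c_k\prod_{j}(w-a_j^k)/(1-\overline{a_j^k}w)$ (with degrees and zeros taken along a further subsequence so that $d_k$ stabilises and each $a_j^k$ converges in $\overline{\D}$) shows that escaping zeros contribute unimodular constants in the limit, the bound $|B_\infty(0)|\leq\eta<1$ rules out the case where all zeros escape, and properness of the nondegenerate limit then confines $V_k$ for large $k$, a contradiction. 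The Hausdorff-limit argument for the decay of $C(N,\eta)$ as $\eta\to 0$ is likewise correct, since the positive Euclidean diameter of the limiting continuum together with $|B_k|<\eta_k\to 0$ on $V_k$ forces the nonconstant limit $B_\infty$ to vanish on a nondiscrete set.

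It is worth noting that a shorter, fully explicit argument is available and avoids normal families. Writing $B=c\prod_{j=1}^d b_{a_j}$ with $b_a(w)=(w-a)/(1-\bar a w)$ and $d\leq N$, any $w$ in a component $V$ of $B^{-1}(\D_\eta)$ satisfies $\prod_j|b_{a_j}(w)|<\eta$, hence $\min_j|b_{a_j}(w)|<\eta^{1/d}$; by Schwarz--Pick the Poincar\'e distance from $w$ to the corresponding zero $a_j$ is less than $2\tanh^{-1}(\eta^{1/d})$. Thus $V$ is covered by at most $d\leq N$ Poincar\'e balls of that radius, and since $V$ is connected a chain (or nerve) argument yields $\diam_{\D}(V)\leq 4N\tanh^{-1}\bigl(\eta^{1/N}\bigr)$, which is explicit and visibly tends to $0$ with $\eta$. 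Your softer compactness route is equally valid but yields no explicit constant; the chain argument also bypasses the one place in your proof where care is needed, namely keeping the limiting Blaschke product nondegenerate.
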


\subsection*{An abstract version of the theorem}
To clarify the structure of the proof, we begin by stating explicitly
 the hypotheses
 on the domain $\Omega$ that are needed in the proof (essentially, these
 are the requirements to make (I) above work; the reader will quickly 
 recognize (*) below as a type of nonrecurrence condition). We then deduce
 a version of 
 Ma\~n\'e's theorem under these assumptions. Afterwards, 
 we formulate some
 explicit situations in which our setup applies. 
 For the remainder of this section, let $N_0\in\N$ be the smallest 
 number such that $\D$ can be covered by $N_0$ disks of radius $1/3$
 with centers in $\D$. Recall our standing assumption that
 $f:\C\to\Ch$ is a meromorphic, nonconstant and nonlinear function. 

 \begin{defn}[Regular sets] \label{defn:manesetup}%%%\controlledset
  Let $S_0$ be a finite set of critical values
   of $f$ and let $D\in\N$ be an integer. We set
   \begin{equation}
     N := D^{\# S_0} \quad\text{and}\quad C_0 := N_0\cdot N\cdot C(N,2/3).
     \label{eqn:N}
   \end{equation}

  A non-empty open set $\Omega\subset\C$ with 
   $f^{-1}(\Omega)\subset\Omega$ is called a
   \emph{$(S_0,D)$-{\controlledset}} if the following condition holds. 
    \begin{enumerate}
      \item[(*)] 
       Let $V\subset\Omega$ be an arbitrary simply connected domain, and let
        $U$ be a component of $f^{-1}(V)$. Assume
        that there is some $W\subset\Omega$ with $V\subset W$ and 
        $\diam_{\Omega}(W)\leq 2C_0$ such that the component
        of $f^{-1}(W)$ containing $U$ intersects $\P(f)$.

       Then 
        $f:U\to V$ is a proper map of degree at most $D$ with at most
        one branched value. (In particular, $U$ is simply connected.)
        Furthermore, if there is a branched value $s$ of this map, then
        $s\in S_0$ and $f^j(s)\notin U$ for all $j\geq 0$.
    \end{enumerate}

  (A set that is $(S_0,D)$-regular for some $S_0$ and $D$ is %%%\controlledset
   called a \emph{\controlledset}.)
 \end{defn}
 \begin{remark}
   The assumptions of (*) are satisfied, in particular, if
    $\diam_{\Omega}(V)\leq 2C_0$ and $U\cap \P(f)\neq\emptyset$. 
    The slightly weaker assumption in (*)
    is introduced to allow for pullbacks that leave the postsingular set,
    but in a sense remain ``close'' to it. 
 \end{remark}
 
 Given an $(S_0,D)$-{\controlledset} $\Omega$ and a simply connected
  domain $V\subset\Omega$, let us say that a component $U$ of
  $f^{-1}(V)$ is a ``\good'' preimage component of $V$ if the
  assumption in (*) is satisfied. (This notion depends on $S_0$, $D$
  and $\Omega$. Whenever we use the term, it will be clear from
  the context what these are.) We remark that, by definition, only sets 
  $V$ with
  $\diam_{\Omega}(V)\leq 2C_0$ can have {\good} preimage components.

In this situation, principle (I) can be phrased as the following simple lemma.
\begin{lem}[Long {\good} pullbacks have bounded branching]
 Suppose that $\Omega$ is an $(S_0,D)$-{\controlledset}
     and that $V_0\subset \Omega$ is simply connected. Consider
   a pullback $V_0\leftarrow V_1 \leftarrow \dots \leftarrow V_n$ of
   $V_0$; i.e., $V_{j+1}$ is a component of $f^{-1}(V_j)$. 
   Assume that, for each $j<n$ with $V_j\cap \P(f)\neq\emptyset$, 
   $V_{j+1}$ is a {\good} preimage component of $V_j$. 

 Then all $V_j$ are simply connected, and $f^n:V_n\to V_0$ is a proper map
  of degree at most $N$, where $N$ is as in 
  (\ref{eqn:N}). \label{lem:deg}
\end{lem}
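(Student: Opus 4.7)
The plan is to prove by induction on $j \in \{0,1,\dots,n\}$ the three conclusions at once: $V_j$ is simply connected, $f^j \colon V_j \to V_0$ is proper, and $\deg(f^j \colon V_j \to V_0)$ is at most a product of one factor of size~$\leq D$ for each element of $S_0$ that has already been ``spent'' as a branched value. The base case $j=0$ is immediate from the hypothesis that $V_0$ is simply connected.

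For the inductive step, assume the claim holds at $j$, and fix $j<n$. If $V_j \cap \P(f) = \emptyset$, then $V_j$ omits every singular value of $f$ (critical and asymptotic), since $S(f) \subseteq \P(f)$. Hence $f \colon V_{j+1} \to V_j$ is an unbranched proper covering of the simply connected set $V_j$; since $V_{j+1}$ is connected, this cover is trivial, so $V_{j+1}$ is simply connected and $f \colon V_{j+1} \to V_j$ is a conformal isomorphism. If instead $V_j \cap \P(f) \neq \emptyset$, then by hypothesis $V_{j+1}$ is a {\good} preimage of $V_j$, and condition~(*) from Definition~\ref{defn:manesetup} delivers simple connectedness of $V_{j+1}$, properness of $f \colon V_{j+1} \to V_j$, degree at most $D$, and at most one branched value, which must lie in $S_0$.

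The crucial combinatorial observation is that each $s \in S_0$ can serve as a branched value of at most one step $f \colon V_{j+1} \to V_j$ along the pullback. Indeed, suppose some $s \in S_0$ is the branched value of the steps indexed by both $j_1$ and $j_2$ with $j_1 < j_2$. Then $s \in V_{j_2}$, and iterating the inclusion $f(V_i) \subseteq V_{i-1}$ (which holds because $V_i$ is a component of $f^{-1}(V_{i-1})$) yields $f^{\,j_2-j_1-1}(s) \in V_{j_1+1}$. Setting $k := j_2 - j_1 - 1 \ge 0$, this contradicts the final clause of~(*) at step $j_1$, which forbids $f^k(s)$ from lying in $V_{j_1+1}$ for any $k \ge 0$.

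Combining these ingredients, at most $\#S_0$ of the $n$ pullback steps can be branched; each such step multiplies the degree by at most $D$, while every remaining step is a conformal isomorphism. This gives the bound
\[ \deg(f^n \colon V_n \to V_0) \;\leq\; D^{\#S_0} \;=\; N, \]
and properness of $f^n$ follows from properness of each factor. There is no serious analytic obstacle here; the argument is essentially the combinatorial backbone of the {\controlledset} framework. The only subtlety worth flagging is that the univalent case $V_j \cap \P(f) = \emptyset$ must be treated by hand, since the ``{\good} preimage'' hypothesis is only supplied when $V_j$ meets $\P(f)$; it is exactly the inclusion $S(f) \subseteq \P(f)$ that makes this automatic.
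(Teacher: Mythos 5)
Your proof is correct and follows the same inductive route as the paper's: treat the two cases $V_j \cap \P(f) = \emptyset$ (conformal isomorphism) and $V_j \cap \P(f) \neq \emptyset$ (apply (*) to a {\good} preimage) to keep each $V_{j+1}$ simply connected with bounded branching, then bound the total degree by $D^{\#S_0}$. The only difference is that you spell out the contradiction argument showing each $s \in S_0$ can serve as a branched value for at most one step (by pushing $s \in V_{j_2}$ forward to $f^{\,j_2-j_1-1}(s) \in V_{j_1+1}$, violating the final clause of (*) at step $j_1$), whereas the paper states this consequence more tersely.
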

\begin{proof}
 We first prove by induction that each 
  $V_{j+1}$ is simply connected, and that
  $f:V_{j+1}\to V_j$ is a proper map, branched of degree at most $D$ 
  at most over a single point
  of $S_0$. Indeed, suppose that we have already shown that
  $V_j$ is simply connected. If $V_j\cap \P(f)=\emptyset$, 
  then $f:V_{j+1}\to V_j$ is a conformal isomorphism, and $V_{j+1}$ is
  simply connected. Otherwise, we can apply (*) to see
  that $f:V_{j+1}\to V_j$ has degree at most $D$ and is 
  branched at most over a single point of $S_0$. This implies
  that $V_{j+1}$ is simply connected.

 Furthermore, let $s\in S_0$. Then, by 
  the final statement in (*), there is at most one $j$ 
  such that $f:V_{j+1}\to V_j$ is branched over $s$. Hence it follows that
  $\deg(f^n:V_n\to V_0)\leq D^{\# S_0} = N$, as claimed.
\end{proof}

 \begin{thm}[General form of Ma\~n\'e's theorem] \label{thm:manegeneral}
  Suppose that 
  $\Omega$ is an $(N_0,D)$-{\controlledset} and let $N$ and $C_0$
  be as in (\ref{eqn:N}). Then every point of $\Omega$ is
  $N$-regular.

 More precisely, let $z_0\in\Omega$ and let
   $\delta>0$ be sufficiently small to ensure
   that $\D_{2\delta}(z_0)\subset\Omega$ and
   $\diam_{\Omega}(\D_{2\delta}(z_0))\leq C_0$. Set 
   $V_0 := \D_{\delta}(z_0)$. 

  Let $n\geq 0$, and suppose that $V$ is a component of 
   $f^{-n}(V_0)$
   with $V\cap \P(f)\neq\emptyset$. Then
  \begin{enumerate}
   \item $V$ is simply connected and $\deg(f^n:V \to D_0)\leq N$, and%
      \label{item:deg}
   \item $\diam_{\Omega}(V)\leq C_0$.  \label{item:diam}
  \end{enumerate}
 \end{thm}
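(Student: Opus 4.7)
I would prove (\ref{item:deg}) and (\ref{item:diam}) simultaneously by induction on $n$. The base case $n=0$ is immediate: $V=V_0$ is a Euclidean disk with $\deg=1$ and $\diam_\Omega(V_0)\leq\diam_\Omega(\D_{2\delta}(z_0))\leq C_0$.

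For the inductive step, consider the pullback chain $V_0\leftarrow V_1\leftarrow\cdots\leftarrow V_n=V$. Forward invariance of $\P(f)$ ensures $V_j\cap\P(f)\neq\emptyset$ for every $j\leq n$, and by the inductive hypothesis each $V_j$ with $j<n$ is simply connected with $\diam_\Omega(V_j)\leq C_0\leq 2C_0$. Taking $W:=V_j$ as the superset in condition (*) shows that $V_{j+1}$ is a {\good} preimage component of $V_j$ (the component of $f^{-1}(V_j)$ containing $V_{j+1}$ is just $V_{j+1}$ itself, which meets $\P(f)$). Lemma \ref{lem:deg} applied to the whole chain then delivers (\ref{item:deg}): $V_n$ is simply connected and $\deg(f^n:V_n\to V_0)\leq N$.

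For (\ref{item:diam}) I apply the covering dictated by the formula $C_0=N_0\cdot N\cdot C(N,2/3)$. Pick $N_0$ points $z_1,\dots,z_{N_0}\in V_0$ so that the disks $D_\alpha:=\D_{\delta/3}(z_\alpha)$ cover $V_0$; the concentric $(2/3)$-scale disks $D_\alpha':=\D_{\delta/2}(z_\alpha)$ lie in $\D_{3\delta/2}(z_0)\subset\D_{2\delta}(z_0)\subset\Omega$. Since $f^n|_{V_n}:V_n\to V_0$ is proper of degree at most $N$ by (\ref{item:deg}), the set $V_n$ is covered by at most $N\cdot N_0$ connected pieces $\tilde V$, each a component of $f^{-n}(D_\alpha)\cap V_n$, and each contained in a unique component $\tilde U\subset\Omega$ of $f^{-n}(D_\alpha')$. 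Granting the sub-claim that $f^n:\tilde U\to D_\alpha'$ is proper of degree at most $N$, Lemma \ref{lem:diam} applied with $\eta=2/3$ yields $\diam_\Omega(\tilde V)\leq\diam_{\tilde U}(\tilde V)\leq C(N,2/3)$, and stringing the $\leq N\cdot N_0$ pieces together through the connectedness of $V_n$ yields $\diam_\Omega(V_n)\leq N_0\cdot N\cdot C(N,2/3)=C_0$.

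The sub-claim---that the auxiliary pullback $\tilde U$ of $D_\alpha'$ has degree at most $N$---is the chief obstacle, since $\tilde U$ itself need not meet $\P(f)$; only the ambient $V_n$ does. I expect to handle it by applying Lemma \ref{lem:deg} to the auxiliary chain $D_\alpha'=U_0\leftarrow U_1\leftarrow\cdots\leftarrow U_n=\tilde U$ and verifying (*) at each step via the companion chain of $V_j$'s: fixing $p\in\tilde V$ and setting $p_j:=f^{n-j}(p)\in U_j\cap V_j$, the component $\hat W_{j-1}$ of $f^{-(j-1)}(V_0\cup D_\alpha')$ through $p_{j-1}$ contains both $U_{j-1}$ and $V_{j-1}$ and meets $\P(f)$ through $V_{j-1}$. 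Because $V_0\cup D_\alpha'$ is the simply connected union of two overlapping disks, sits inside $\D_{2\delta}(z_0)$, and has $\Omega$-diameter at most $C_0$, a covering argument analogous to the one just carried out bounds $\diam_\Omega(\hat W_{j-1})\leq 2C_0$; thus $W:=\hat W_{j-1}$ serves as the required witness in (*), and Lemma \ref{lem:deg} closes the sub-claim and completes the induction.
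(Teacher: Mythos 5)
Your proof of part (1) is the same as the paper's, and your covering strategy for part (2) is also the paper's (cover $V_0$ by $N_0$ disks $D_\alpha$ of radius $\delta/3$, pull back the enlarged disks $D_\alpha'$ of radius $\delta/2$, apply Lemma~\ref{lem:diam}). But your resolution of the sub-claim --- the degree bound for the auxiliary pullback $\tilde{U}=U_n$ --- has a genuine gap. You take as witness in~(*) the set $W=\hat{W}_{j-1}$, the full component of $f^{-(j-1)}(V_0\cup D_\alpha')$ through $p_{j-1}$, and then assert $\diam_\Omega(\hat{W}_{j-1})\le 2C_0$ via ``a covering argument analogous to the one just carried out.'' This is circular: that covering argument presupposes a degree bound on $f^{j-1}$ restricted to $\hat{W}_{j-1}$, which is exactly the kind of control we are in the middle of establishing, and the induction hypothesis does not supply it. The hypothesis governs pullbacks of the \emph{disks} $\D_\delta(z_0)$ (or $\D_{\delta/2}(z_\alpha)$) that meet $\P(f)$; but $V_0\cup D_\alpha'$ is not a disk, and $\hat{W}_{j-1}$ can swallow preimage components of $V_0$ or of $D_\alpha'$ that do not meet $\P(f)$ and over which you have no diameter control at all.

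The paper sidesteps this with the far simpler witness $W=\tilde{U}_k\cup V_k$, by exploiting two things your argument does not use. First, Lemma~\ref{lem:deg} only demands good-ness at the indices $k$ where $\tilde{U}_k$ \emph{itself} meets $\P(f)$, not at every step. Second, at precisely those indices the induction hypothesis can be applied with the \emph{new} centre $z_\alpha$ and half-radius $\delta/2$ (valid because $\D_{\delta}(z_\alpha)\subset\D_{2\delta}(z_0)\subset\Omega$, so $\diam_\Omega(\D_\delta(z_\alpha))\le C_0$), giving $\diam_\Omega(\tilde{U}_k)\le C_0$ directly. Combined with $\diam_\Omega(V_k)\le C_0$ from the main chain and $\tilde{U}_k\cap V_k\ne\emptyset$, this yields $\diam_\Omega(W)\le 2C_0$ with no further covering needed, and the component of $f^{-1}(W)$ containing $\tilde{U}_{k+1}$ contains $V_{k+1}$ and hence meets $\P(f)$. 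Replacing your $\hat{W}_{j-1}$ by this union, and restricting attention to the steps of the auxiliary chain that meet $\P(f)$, is what closes the argument.
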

 \begin{proof}
  The proof is by induction. Both claims are trivial for
   $n=0$.

  Let $V_0 \leftarrow V_{1} \leftarrow \dots \leftarrow V_n= V$ 
   be the pullback of $V_0$ along the orbit of $V$.
   By part (\ref{item:diam}) of 
   the induction hypothesis, we have
   $\diam_{\Omega}(V_k)\leq C_0$ for all $k<n$. Also 
   $V_{k+1}\cap \P(f)\neq \emptyset$. Hence each
   $V_{k+1}$ is a {\good} preimage component of $V_k$, and 
   (\ref{item:deg}) follows from the induction hypothesis and
   Lemma \ref{lem:deg}. 

\medskip

  So, let us prove (\ref{item:diam}). To do so, 
   cover the disk $V_0$ by
   $N_0$ disks of radius $\delta/3$ with centers in $V_0$. 
   Let $U^1,\dots,U^{\ell}$ be the collection of those preimage components
   of any of these disks that intersect $V$. So each
   $U^j$ is a component of $f^{-n}(D^j)$, where $D^j=\D_{\delta/3}(z^j)$ with
   $z^j\in V_0$, and the $U^j$ cover $V$.
   By (\ref{item:deg}), the number $\ell$ does not exceed
   $N_0\cdot N$. We claim that each $U^j$ has diameter at most
   $C(N,2/3)$ in the hyperbolic metric of $\Omega$. 

\smallskip

 To show this, let us fix $j$ for the moment, and write
   $U := U^{j}$. Consider the
   disk $\tilde{D} := \tilde{D}_j := \D_{\delta/2}(z^j)$. Observe that
   $\D_{\delta}(z^j) \subset \D_{2\delta}(z_0)$
   by construction, so 
   we will be able to apply the induction hypothesis to pullbacks of
   the disk
   $\tilde{D}$. Let
    \[ \tilde{D} =: \tilde{U}_0  \leftarrow
         \tilde{U}_1 \leftarrow \dots \leftarrow 
         \tilde{U}_n  \]
   be the pullback of $\tilde{D}$ along the orbit of $U$. Note that 
   $V_k \cap \tilde{U}_k \neq\emptyset$ for all $k$ by construction. 

 If $k<n$ is such that
   $\tilde{U}_k\cap \P(f)\neq\emptyset$, then 
   $\diam(\tilde{U}_k)\leq C_0$ by part (\ref{item:diam}) of
   the induction hypothesis. So the
   set $W := \tilde{U}_k\cup V_k$ has diameter at most $2C_0$ in
   $\Omega$. The component of
   $f^{-1}(W)$ containing $\tilde{U}_{k+1}$ 
   also contains $V_{k+1}$, and hence intersects the postsingular set.    
   So $\tilde{U}_{k+1}$ is a {\good} preimage component of 
   $\tilde{U}_k$, and  Lemma \ref{lem:deg} implies that 
   $\tilde{U}_n$ is simply connected and 
      \[ \deg(f^{n}:\tilde{U}_n \to \tilde{D}) \leq N. \]
    By Lemma \ref{lem:diam}, it follows that
    $\diam_{\Omega}(U)\leq \diam_{\tilde{U}_n}(U) \leq C(N,2/3)$, 
    as claimed.

\smallskip
  Hence
     \begin{align*}
  \diam_{\Omega}(V) &\leq 
      \diam_{\Omega}\left(\bigcup_{j=1}^{\ell} \cl{U^j}\right) \leq
            \sum_{j=1}^{\ell}
         \diam_{\Omega}(U^j) \\ 
      &\leq  \ell\cdot C(N,2/3) \leq
              N\cdot N_0 \cdot C(N,2/3) =
                 C_0.
     \end{align*}
   The proof of the induction step is complete. 
 \end{proof}

\subsection*{An explicit version of Ma{\~n}\'e's theorem}

 We will now discuss  under which hypotheses
  it is possible to construct a regular set $\Omega$. %%%\controlledset
  To do so, we introduce the following definitions.

 \begin{defn}[Recurrence]
  Let $s\in S(f)\cap\C$, 
   and let $V$ be a simply connected neighborhood of $s$.
   We say that a component $U$ of $f^{-1}(V)$ is \emph{unbranched} if 
   $f:U\to V$ is univalent. Otherwise we say that $U$ is \emph{singular}.
   More generally, we say that a component 
   $U$ of $f^{-1}(V)$ is \emph{$d$-controlled} if 
   $f:U\to V$ is proper of degree at most $d$ and has no branched points
   except possibly
   over $s$. 

  The singular value $s$ is \emph{nonrecurrent} if there is some $\delta>0$
   such that no singular
   preimage of $V=\D_{\delta}(s)$ intersects the forward
   orbit of $s$. (Otherwise $s$ is called \emph{recurrent}.)

 We also say that $s$ is
   \emph{strongly nonrecurrent} if furthermore there is $d_s\in\N$ such that
   every component of $f^{-1}(V)$ that intersects $\P(f)$
   is $d_s$-controlled. (Otherwise, we call $s$ \emph{almost recurrent}.)
 \end{defn}
 \begin{remark}
  If $\P(f)\cap\C$ is bounded, then 
   there are at most finitely many 
   recurrent singular values, and every nonrecurrent singular value
   is strongly nonrecurrent. 
 \end{remark}

  For $z_0\in J(f)\cap\C$ and $\delta>0$, let us define
   \[ 
    S_{\delta}(z_0) := 
       \cl{\left\{s\in S(f): \exists j\geq 0: |f^j(s)-z_0|\leq \delta\right\}}. 
   \]
 Let us also say that a wandering domain $G$ is ``bad'' if $f$ is not a 
 covering map from $G$ to the Fatou component containing $f(G)$ and 
 furthermore
 $G\cap \P(f)\neq\emptyset$. Note that the grand orbit of a bad wandering
 domain must contain at least two different singular values. 

\begin{thm}[Explicit hypotheses for Ma{\~n}\'e's Theorem]
   \label{thm:maneexplicit}
  Suppose that 
   $z_0\in J(f)\cap\C$ is not a parabolic periodic point. Also assume that
   $z_0$ is not a limit point of the forward orbit of a 
   bad wandering domain.

 Suppose furthermore that for some $\delta>0$, the set
  $S_1 := S_{\delta}(z_0)$ is bounded, and 
  that every $s\in S_1$ is strongly nonrecurrent. 

 Then there exist a finite subset $S_0\subset S_1$, a number
   $D\in\N$ and a
    $(S_0,D)$-{\controlledset} $\Omega\ni z_0$. 
  In particular, $z_0$ is regular by Theorem \ref{thm:manegeneral}.
\end{thm}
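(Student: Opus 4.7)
My plan is to produce the triple $(S_0, D, \Omega)$ explicitly and then verify condition (*) of Definition~\ref{defn:manesetup} directly. First, I exploit the compactness of $S_1$ together with the strong nonrecurrence of each $s\in S_1$: the latter supplies a disk $V_s=\D_{\delta_s}(s)$ and an integer $d_s$ such that every component of $f^{-1}(V_s)$ meeting $\P(f)$ is $d_s$-controlled, with the branched value (if any) equal to $s$ and avoided by all iterates of $s$. The half-radius disks $\{\D_{\delta_s/2}(s):s\in S_1\}$ cover $S_1$, so by compactness I extract a finite subcover with centers $s_1,\dots,s_k$; take $S_0:=\{s_1,\dots,s_k\}\cap\Crit(f)$ and $D:=\max_i d_{s_i}$.

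For $\Omega$ I set $\Omega:=\C\setminus X$ where $X$ is a closed, forward-invariant (modulo poles) subset of $\C$ avoiding $z_0$, built as the closure of: (i) the forward orbits of those $s\in S(f)$ with $s\notin S_1$ (which remain at distance $>\delta$ from $z_0$); (ii) forward orbits of singular values belonging to bad wandering domains (whose accumulations miss $z_0$ by hypothesis); and (iii) parabolic periodic cycles (from which $z_0$ is excluded by hypothesis). By construction $z_0\in\Omega$, $f^{-1}(\Omega)\subset\Omega$, and every accumulation point of $\P(f)\cap\Omega$ lies in $S_1$, hence inside $\bigcup_i \D_{\delta_{s_i}/2}(s_i)$.

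To verify (*), suppose $V\subset\Omega$ is simply connected, $W\subset\Omega$ contains $V$ with $\diam_\Omega(W)\le 2C_0$, and the pullback component of $W$ containing a chosen component $U$ of $f^{-1}(V)$ meets $\P(f)$ at some point $f^m(s)$. Then $W$ contains the postsingular point $f^{m+1}(s)\in\P(f)\cap\Omega$, which by construction of $\Omega$ lies in some $V_{s_i}$. Choosing $C_0$ small enough -- using Lemma~\ref{lem:diam} applied to a slight enlargement of each $V_{s_i}$ inside $\Omega$ -- the bounded-hyperbolic-diameter hypothesis on $W$ then forces $W\subset V_{s_i}$ entirely. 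The strong nonrecurrence of $s_i$ gives that $f:U\to V$ is proper of degree at most $d_{s_i}\le D$, branched at most over $s_i\in S_0$, and satisfies $f^j(s_i)\notin U$ for all $j\ge 0$. This is exactly~(*), and $z_0$ is $N$-regular by Theorem~\ref{thm:manegeneral}.

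The main obstacle is the simultaneous control of hyperbolic and Euclidean scales: ensuring that sets of bounded hyperbolic diameter in $\Omega$ which meet $\P(f)\cap\Omega$ are Euclidean-small and localized to a single $V_{s_i}$. The two-scale slack between the $\delta_{s_i}/2$- and $\delta_{s_i}$-covers provides room for this, but one must also verify that the hyperbolic metric of $\C\setminus X$ blows up enough near $X$ to prevent hyperbolically small sets from wandering. If the three forward-invariant components (i)--(iii) listed above are insufficient, I would thicken $X$ -- for instance by adjoining a preselected repelling periodic orbit in $J(f)$ disjoint from $z_0$ and from $\bigcup_i V_{s_i}$ -- so that $\Omega$ becomes sufficiently hyperbolic, and then verify the localization by a compactness/normal-families argument on the $s_i$'s one at a time.
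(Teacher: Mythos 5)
Your strategy---cover $S_1$ by strong-nonrecurrence disks, build $\Omega$ as the complement of a forward-invariant closed set avoiding $z_0$, and then verify~(*)---follows the paper's outline, but there is a genuine gap in the construction of $\Omega$ and in the verification step, precisely at the case the paper handles via its set $S_2$.

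The problem is the case of a singular value $s\in S_1$ that lies deep in the Fatou set (at Euclidean distance $\ge\eps$ from $J(f)$, say). Near such $s$, removing iterated preimages of a repelling orbit does \emph{not} make the hyperbolic metric of $\Omega$ large, so no amount of tuning forces a set $W\ni s$ with $\diam_\Omega(W)\le 2C_0$ to lie inside the Euclidean disk $\D_{\delta_{s_i}}(s_i)$. (Note also that $C_0$ is not a free parameter: it is determined by $S_0$ and $D$ through $N=D^{\#S_0}$ and $C_0=N_0\,N\,C(N,2/3)$, so ``choosing $C_0$ small enough'' is not available to you; the only knobs are $\eps$, $M$, and the sets you delete.) In the paper, singular values that are far from $J(f)$ and \emph{not} in a good wandering domain are collected into a set $S_2$, and it is shown (using the hypotheses that $z_0$ is not parabolic and is not accumulated by orbits of bad wandering domains, plus a specific argument ruling out Baker-domain accumulation via recurrence of asymptotic values) that $z_0\notin A:=\cl{\bigcup_j f^j(S_2)}$; the set $A$ is then removed from $\Omega$. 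Your list (i)--(iii) omits $A$: you never remove forward orbits of singular values that sit in Siegel disks, attracting or parabolic basins, Baker domains, or their preimages. Consequently your claim that ``every accumulation point of $\P(f)\cap\Omega$ lies in $S_1$'' is unjustified, and your verification of~(*) breaks for a $V$ containing such an $s$: you cannot conclude $W\subset V_{s_i}$, and there is no covering/good-wandering-domain structure to fall back on. This is the substantive content of the paper's proof, not a technicality. (A separate minor issue: $S_0:=\{s_1,\dots,s_k\}\cap\Crit(f)$ intersects with critical \emph{points}; you want the cover centers themselves, with the understanding that only the critical values among them can actually occur as branched values.)
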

\begin{remark}
 It is not strictly necessary to assume that $S_1$ is bounded; we could allow
  unbounded singular sets
  if we were willing to introduce additional technical
  assumptions. 

 Likewise, we could still relax the definition of bad wandering domains
  somewhat, e.g.\ allowing components that intersect the postsingular set
  to be mapped as a branched covering with
  at most one branched point and bounded degree.

 However, it seems to us that the extra generality obtained
  would not justify
  the additional technicality of assumptions.
\end{remark}
\begin{proof}
  Let $s\in S_1$. Then there are $\delta_s>0$ and $d_s\in\N$ such that 
   every singular preimage component of $\D_{\delta_s}(s)$ that intersects 
   $\P(f)$
   is $d_s$-controlled and disjoint from the forward orbit of $s$. 

  Since $S_1$ is compact, we can pick a finite set $S_0\subset S_1$
   such that the collection of disks $\D_{\delta_s}(s)$ with $s\in S_0$
   covers $S_1$. Let $\delta$ be the Lebesgue covering number of this
   covering.

  Now let $D := \max_{s\in S_0} d_s$, and set 
   $N := D^{\# S_0}$ and $C_0 := N_0\cdot N \cdot C(N,2/3)$, as in 
   (\ref{eqn:N}). 

  To construct the domain $\Omega$, let 
   $\Orbit$ be a repelling periodic orbit of period at least $3$ that
   is not contained in the orbit of $z_0$. Then the backward orbit of 
   $\Orbit$ is dense in the Julia set by the Picard and Montel theorems
   (compare Section \ref{sec:exceptional}). 

  Consequently, if we choose $\eps>0$ small enough  and
   $M>0$ sufficiently large, then the domain 
   $\Omega_1 := \C\setminus f^{-M}(\Orbit)$ will have the following
   properties. 
   If $s\in S_1$ with
   $\dist(s,J(f)) < \eps$, then 
   \begin{equation}
      \{z: \dist_{\Omega_1}(z,s) \leq 2C_0 \} \subset
            \D_{\delta}(s); \label{eqn:closetojulia}
   \end{equation}
  while for $s\in S_1$ with $\dist(s,J(f)) \geq \eps$,
   \begin{equation}
      \{z: \dist_{\Omega_1}(z,s) \leq 2C_0 \} \subset
             F(f). \label{eqn:farfromjulia}
   \end{equation}
 (This follows readily from standard estimates on the hyperbolic metric;
  compare e.g.\ \cite[Section 2.2]{mcmullenrenormalization}.) 

 Now consider the subset $S_2\subset S_1$ consisting of all
  singular values $s\in S_1$ with $\dist(s,J(f))\geq \eps$ that are
  not contained in a good (i.e., not bad) wandering domain. 
  Then $S_2$ is a compact subset of the Fatou set, and hence is contained in
  finitely many Fatou components $U_1,\dots ,U_n$. 
  Each of these Fatou components is one of the following.
   \begin{itemize}
     \item An attracting domain, a Siegel disk or a Herman ring, or 
       an iterated preimage of such a domain. In each case, the orbit of
       $S_2\cap U_j$ is compactly contained in the Fatou set.
     \item A parabolic domain, 
        an iterated
        preimage of such a domain,
        or a \emph{bad} wandering domain. In this case, the forward orbit of 
        $S_2\cap U_j$ may accumulate
        at a parabolic point, at infinity or at some other points of the 
        Julia 
        set. However, it does not
        accumulate on $z_0$ by assumption.
     \item A \emph{Baker domain}, or an iterated preimage of such a domain.
        I.e., $f$ is a transcendental meromorphic function, $U_j$ is
        a (pre-)periodic Fatou component and
        $\infty$ is a limit function of the sequence $f^n|_{U_j}$. 
        We claim that the forward orbit of $S_2\cap U_j$ cannot accumulate
        at $z_0$ in this case either. 

       To show this, let $V_0\to V_1 \to \dots \to V_p = V_0$ be the periodic
        orbit of Fatou components to which $U_j$ eventually maps, 
        and let $a_i\in\Ch$ be the limit function of
        the sequence
        $f^{pn}|_{V_i}$. Then $a_i=\infty$ for at least one $i$.
        Furthermore, if $a_i\in\C$, then $a_{i+1}=f(a_i)$, and if
        $a_{i}=\infty$, then $a_{i+1}$ is an asymptotic value of $f$;
        more precisely, there is a curve to infinity in $V_i$ whose
        image is a curve in $V_{i+1}$ ending at $a_{i+1}$.
        (See \cite[Theorem 13]{waltermero}.) 

       Since the cycle of Baker domains contains points of the postsingular
        set by assumption, such an asymptotic values $a_{i+1}$ cannot be
        strongly
        nonrecurrent. Indeed, if we take
        a preimage of a disk around $a_{i+1}$ that contains a tail of
        the aforementioned curve, then this preimage component
        is not mapped
        as a branched covering, but contains postsingular points.

       To summarize, every finite limit point of the sequence
        $f^n|_{U_j}$ lies in the forward orbit of some 
        asymptotic value that is recurrent or almost recurrent. By assumption,
        $z_0$ cannot be one of these limit points. 
   \end{itemize}
   It follows that
     \[ z_0 \notin A := \cl{\bigcup_{j\geq 0} f^j(S_2)}. \]

   We now set
     \[ \Omega := \Omega_1 \setminus \left( A \cup
                    \cl{\bigcup_{j\geq 0} f^j(S(f)\setminus S_1)}\right). \]
    Then $z_0\in \Omega$; since $\Omega$ is the complement of a forward
     invariant set, we also have $f^{-1}(\Omega)\subset\Omega$. 
     It remains to establish the condition (*) from
     Definition \ref{defn:manesetup}.

   Suppose that $U,V,W\subset\Omega$ are as in (*). 
    If $V\cap S(f) = \emptyset$, then
   (since $V$ is simply connected) $f:U\to V$ is a conformal isomorphism.
    Otherwise, $V$ contains some singular value $s$; by construction we
    must have $s\in S_1\setminus S_2$.

   If $\dist(s,J(f))<\eps$, then (\ref{eqn:closetojulia}) and the fact
    that 
    $\diam_{\Omega}(W)\leq 2C_0$ imply that
    $W\subset \D_{\delta}(s)\subset \D_{\delta_{s_0}}(s_0)$ for some
    $s_0\in S_0$. By assumption, the component of 
    $f^{-1}(\D_{\delta_{s_0}}(s_0))$ containing $U$ intersects the
    postsingular set. If this component is unbranched, then
    $f:U\to V$ is univalent. Otherwise, by construction
    the map $f:U\to V$ is branched only over $s_0$, and of degree at most $D$,
    as claimed.

   On the other hand, if $\dist(s,J(f))\geq\eps$, then by definition of
    $S_2$, the singular value
    $s$ is contained in a wandering domain $G$, and this domain
    is not bad. By (\ref{eqn:farfromjulia}), we then have
    $W\subset G$. So the component of 
    $f^{-1}(G)$ containing $U$ intersects the postsingular set.
    By the definition of bad wandering domains, this component is mapped
    onto $G$ as a holomorphic covering map. Since $V$ is simply connected,
    it follows that $f:U\to V$ is
    univalent.

   This completes the proof of the theorem.
\end{proof}

The following standard lemma will be used to apply 
 Theorem \ref{thm:maneexplicit}. 

\begin{lem}[Bounded pullbacks] \label{lem:pullbacks}
 Suppose that $z_0\in\C$ is a regular point, and let $\eps>0$ and $R>0$. 
  Then there are
  $\delta>0$ and $D\in\N$ with the following property:

 If $n\in\N$ and $V$ is a component
  of $f^{-n}(\D_{\delta}(z_0))$ with 
       \[ f^j(V)\cap \D_R(0) \neq \emptyset \]
   for $j=0,\dots , n$, then $\diam(V)\leq \eps$ and 
     \[ \deg(f:V\to \D_{\delta}(z_0)) \leq D. \]
\end{lem}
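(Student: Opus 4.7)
The plan is to combine the regularity of $z_0$ with the Koebe distortion theorem for proper maps of bounded degree, using the hypothesis that the orbit of $V$ meets $\D_R(0)$ at every stage to convert a hyperbolic diameter bound into a Euclidean one. Let $\delta_0>0$ and $\Delta\in\N$ be the regularity parameters of $z_0$ from Definition \ref{defn:regularity}. Fix $\delta\le\delta_0/2$, to be further constrained below. For $V$ as in the hypothesis, let $\tilde V\supset V$ denote the pullback component of the larger disk $\D_{\delta_0}(z_0)$ along the same backward orbit, and write $U_0=\D_{\delta_0}(z_0),\,U_1,\dots,U_n=\tilde V$ for the intermediate pullback components.

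To bound the degree, I would identify the deepest index $k^*$ at which $U_{k^*}$ meets $\P(f)$ (setting $k^*=-1$ if none does). If $k^*=-1$, then every target $U_{k-1}$ misses the critical and asymptotic values of $f$, so each $f:U_k\to U_{k-1}$ is an unbranched proper cover of a simply connected base and hence univalent, giving $\deg(f^n:\tilde V\to U_0)=1$. Otherwise, regularity applied at $U_{k^*}$ yields $\deg(f^{k^*}:U_{k^*}\to U_0)\le\Delta$ together with simple-connectivity of $U_{k^*}$, and the same covering-space argument shows that $f:U_k\to U_{k-1}$ is univalent for every $k>k^*+1$. The remaining contribution to the total degree is then the single potentially branched step $f:U_{k^*+1}\to U_{k^*}$: the orbit hypothesis $f^{n-k^*}(V)\cap\D_R(0)\ne\emptyset$ forces $U_{k^*}\cap\D_R(0)\ne\emptyset$, and a Koebe-type estimate applied to $f^{k^*}:U_{k^*}\to U_0$ confines $U_{k^*}$ to a bounded neighborhood of that intersection, in which $f$ has only finitely many critical values. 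Combining these gives a uniform bound $D$ on $\deg(f^n:V\to\D_\delta(z_0))$.

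With the degree bounded, the diameter estimate follows from Lemma \ref{lem:diam}: every component of the preimage of $\D_\delta(z_0)$ under the proper map $f^n:\tilde V\to\D_{\delta_0}(z_0)$ of degree at most $D$ has hyperbolic diameter in $\tilde V$ bounded by $C(D,\delta/\delta_0)$, tending to zero as $\delta\to 0$. The hypothesis $V\cap\D_R(0)\ne\emptyset$ supplies a base point at which the standard comparison between the hyperbolic metric of $\tilde V$ and the Euclidean metric, combined with a Koebe-type control of $\dist(z,\partial\tilde V)$, converts this into a Euclidean diameter bound; choosing $\delta$ sufficiently small then yields $\diam(V)\le\eps$. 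The main obstacle is the control of the branched step $f:U_{k^*+1}\to U_{k^*}$: everything hinges on the bounded-degree pullback $U_{k^*}$ having bounded Euclidean size near its intersection with $\D_R(0)$, which must be extracted from the interplay of Koebe distortion on $f^{k^*}:U_{k^*}\to U_0$ with the hypothesis that the full forward orbit of $V$ stays near the origin.
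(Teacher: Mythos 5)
Your high-level decomposition is exactly the paper's: split the pullback chain at the last index $k^*$ where the orbit still meets $\P(f)$, use regularity of $z_0$ to bound $\deg(f^{k^*}:U_{k^*}\to U_0)\leq\Delta$, note that the steps with $j>k^*+1$ are univalent because $U_{j-1}$ contains no singular values, and isolate the single potentially branched ``crossing'' step $f\colon U_{k^*+1}\to U_{k^*}$. The diameter estimate at the end (Lemma~\ref{lem:diam} plus a metric comparison) also matches the paper. Up to here you are on track.

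However, there is a genuine gap exactly at the point you flag. You work throughout with the pullbacks $U_0,\dots,U_n$ of the \emph{outer} disk $\D_{\delta_0}(z_0)$, where $\delta_0$ is fixed by the regularity of $z_0$, and you try to control the crossing step by arguing that Koebe applied to $f^{k^*}\colon U_{k^*}\to U_0$ confines $U_{k^*}$ to a bounded set. This cannot work: $U_{k^*}$ is the pullback of a disk of \emph{fixed} radius $\delta_0$, so no amount of shrinking $\delta$ will make $\diam(U_{k^*})$ small, and a degree-$\Delta$ proper map onto $\D_{\delta_0}(z_0)$ does not bound the Euclidean size of its domain (consider $w\mapsto w/M$ on $\D_{M\delta_0}$). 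Moreover, even if you \emph{did} confine $U_{k^*}$ to a bounded region, ``finitely many critical values'' there does not bound $\deg(f\colon U_{k^*+1}\to U_{k^*})$: several critical points, or one of high multiplicity, could still lie in $U_{k^*+1}$. The mechanism the paper uses --- and which is missing from your sketch --- introduces a \emph{third}, intermediate radius. First pick $\rho>0$ so small that for every $w\in\overline{\D_R(0)}$, every component of $f^{-1}(\D_\rho(w))$ meeting $\D_R(0)$ is simply connected, mapped properly by $f$, and contains at most one critical point, necessarily in $\overline{\D_R(0)}$; setting $d$ to be the largest local degree of a critical point in $\overline{\D_R(0)}\setminus\P(f)$, one then has $D=d\Delta$. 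Next, by Lemma~\ref{lem:diam} applied to the nested pair $\D_{\delta_1}(z_0)\subset\D_{\delta_0}(z_0)$, choose $\delta_1<\delta_0$ so small that every pullback component of $\D_{\delta_1}(z_0)$ that meets $\P(f)$ \emph{and} $\D_R(0)$ has Euclidean diameter at most $\rho$. With this in hand, the crossing step for the chain of pullbacks of $\D_{\delta_1}(z_0)$ is contained in one of those ``nice'' $\rho$-preimages, so it has degree at most $d$ (the unique critical point, if it lies in the component, is automatically outside $\P(f)$ since the component is past level $k^*$, so its degree is at most $d$). Only after the degree bound $D=d\Delta$ is established does one shrink further to $\delta<\delta_1$ to get $\diam(V)\leq\eps$ via Lemma~\ref{lem:diam} a second time. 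In short: you need the three-tier nesting $\delta<\delta_1<\delta_0$ and the auxiliary scale $\rho$; working with pullbacks of $\D_{\delta_0}(z_0)$ alone cannot bound the branched crossing step.
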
   
 \begin{proof}
   Let $\Delta$ and $\delta_0$ 
    be the constants from the regularity of $z_0$, and let
    $d$ be the largest degree of a critical point $c\in 
    \cl{\D_R(0)}\setminus \P(f)$. We set $D := d\cdot \Delta$. 

   Pick $\rho>0$ such that, for every $w\in\cl{\D_{R}(0)}$,
    every component $U$ of $f^{-1}(\D_{\rho}(w))$ with
    $U\cap\D_{R}(0)\neq\emptyset$ 
   \begin{itemize}
     \item is simply connected, 
     \item is mapped as a proper map
            by $f$  
     \item contains at most one critical point of $f$, and no 
        critical points that are outside of $\cl{\D_{R}(0)}$. 
   \end{itemize}

   By assumption, we know that every component of 
    $f^{-n}(\D_{\delta_0}(z_0))$ that intersects $\P(f)$ is simply
    connected and mapped by $f^n$ with degree at most $\Delta$. 
    By Lemma \ref{lem:diam}, it follows that, for sufficiently small
    $\delta_1<\delta_0$, every component $U$ of 
    $f^{-n}(\D_{\delta_1}(z_0))$ that intersects $\P(f)$ and
    $\D_{R}(0)$ has diameter
    at most $\rho$. 

   Now let $\tilde{V}$ be a component 
    of $f^{-n}(\D_{\delta_1}(z_0))$ with 
    $f^j(\tilde{V})\cap \D_R(0)\neq\emptyset$ for $j=0,\dots,n$. 
    Then it follows from the above that $\tilde{V}$ is simply connected and
       \[ \deg(f^n:\tilde{V} \to \D_{\delta_1}(z_0)) \leq d\cdot \Delta = D.\]
    Using Lemma \ref{lem:diam} again, we can pick a sufficiently small
     $\delta<\delta_1$ (independent of $n$)
     such that also $\diam(V)\leq \eps$. 
 \end{proof} 

\subsection*{Applications to boundaries of Siegel disks}

\begin{cor}[Recurrent singular values and Siegel disks]
   \label{cor:siegelstrongnonrecurrence}
 Let $f:\C\to\C$ be a nonconstant, nonlinear meromorphic function with
  finitely many singular values, and
  let $U$ be a Siegel disk of $f$.

 Then $\partial U$ is contained in the limit set of
  a recurrent or almost recurrent
  singular value of $f$. 
\end{cor}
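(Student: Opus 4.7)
The plan is to argue by contradiction via Theorem~\ref{thm:maneexplicit}. Suppose no recurrent or almost recurrent singular value $s$ of $f$ satisfies $\partial U \subseteq \omega(s)$, and let $\mathcal{R}$ denote the (finite) set of such singular values. Since the dynamics on $\partial U$ is recurrent (by the Siegel linearization, orbits are dense in $\partial U$), the closed forward-invariant set $\omega(s)\cap\partial U$ is either empty or all of $\partial U$; our assumption excludes the latter, so $\omega(s)\cap\partial U = \emptyset$ for every $s\in\mathcal{R}$. Choose $z_0\in\partial U$ outside the countable set $\bigcup_{s\in\mathcal{R}}\{f^j(s)\colon j\ge 0\}$, and pick $\delta>0$ so small that $S_\delta(z_0)\cap\mathcal{R}=\emptyset$; then every $s\in S_\delta(z_0)$ is strongly nonrecurrent.

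The remaining hypotheses of Theorem~\ref{thm:maneexplicit} are easily verified: $f$ has no wandering domains (by Eremenko--Lyubich, since $f$ has finitely many singular values), hence no bad ones; Siegel disks in this setting are bounded, so $z_0 \in J(f)\cap\C$; $z_0$ is not a parabolic periodic point (incompatible with lying on $\partial U$); and $S_\delta(z_0)\subseteq S(f)$ is finite, hence bounded. Theorem~\ref{thm:maneexplicit} therefore yields that $z_0$ is a regular point. Now pick $n_k\to\infty$ with $f^{n_k}(z_0)\to z_0$ (afforded by the Siegel rotation on $\partial U$), and apply Lemma~\ref{lem:pullbacks} with $R>0$ such that $\partial U\subset\D_R(0)$: there exist $\delta'>0$ and $D\in\N$ such that the pullback $V_k\ni z_0$ of $\D_{\delta'}(z_0)$ along $f^{n_k}$ satisfies $\deg(f^{n_k}|_{V_k})\le D$ and $\diam(V_k)\to 0$. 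Koebe distortion then forces $|(f^{n_k})'(z_0)|\to\infty$.

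This blow-up contradicts the rotational dynamics on $U$: the iterates $f^{n_k}|_U$ form a normal family of hyperbolic isometries of $U$ converging, along a subsequence, to a rotation of $U$, so a snail-lemma-type argument --- using the uniform Koebe distortion bounds on the shrinking pullbacks $V_k$, each of which eventually meets $U$ near $z_0$ --- rules out such derivative blow-up at the boundary point $z_0$. The main obstacle is precisely this last reconciliation: transferring the expansion at the Julia-set boundary point $z_0$ (provided by regularity) into a contradiction with the bounded isometric behavior of the iterates on the adjacent Fatou component $U$.
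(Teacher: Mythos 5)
Your proposal gets the first step right (invoke Theorem~\ref{thm:maneexplicit} at a well-chosen boundary point) but differs from the paper's proof in ways that create genuine gaps.

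First, you claim that ``the closed forward-invariant set $\omega(s)\cap\partial U$ is either empty or all of $\partial U$,'' i.e.\ that $f|_{\partial U}$ is minimal. This is not justified. The dynamics on $\partial U$ is only semiconjugate (via prime ends/harmonic measure) to a rotation and need not be minimal; a proper closed invariant subset is entirely possible. The paper does not need this dichotomy. It negates a weaker statement: it supposes there exists a \emph{single} point $z_0\in\partial U$ (not parabolic) lying outside $\omega(s)$ for every recurrent or almost recurrent $s$, derives a contradiction, and hence concludes that $\partial U$ is covered by the finitely many closed sets $\omega(s)$. Only at the very end does it upgrade this to ``one $\omega(s)$ contains all of $\partial U$,'' and it does so by \emph{ergodicity of harmonic measure} on $\partial U$, not by minimality. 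This is a weaker and provable tool; you replaced it with an unproved minimality claim applied at the wrong stage of the argument.

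Second, you write ``pick $n_k\to\infty$ with $f^{n_k}(z_0)\to z_0$ (afforded by the Siegel rotation on $\partial U$).'' There is no Siegel rotation on $\partial U$; the linearization lives on $U$. A point of $\partial U$ need not be forward-recurrent under $f$. The paper instead takes a \emph{backward} orbit $z_0, z_{-1}, z_{-2}, \dots$ in $\partial U$ — these always exist since $\partial U$ is forward invariant and $f|_{\overline U}$ is surjective — and uses regularity to show the pullbacks of a fixed disk $D\ni z_0$ along this backward orbit have spherical diameter tending to $0$. Since each such pullback contains $z_{-n}\in\partial U$ and hence meets $U$, while $f^n$ maps it onto $D$ with bounded degree, this is incompatible with the hyperbolic-isometric dynamics of $f^n|_U$. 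Your variant via forward recurrence could in principle be repaired (e.g.\ by choosing $z_0$ Poincaré-recurrent for harmonic measure), but as written the recurrence is unsupported.

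Third, you leave the final contradiction as ``a snail-lemma-type argument'' and explicitly flag it as the main obstacle. That is precisely the step that needs to be carried out, and the paper does so (via the shrinking-pullback-meets-$U$ argument) rather than deferring it.

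Minor points: a parabolic periodic point is not obviously ``incompatible with lying on $\partial U$'' — the paper simply chooses $z_0$ outside the countable set of parabolic periodic points; and boundedness of $U$ is not needed (and not obvious for meromorphic $f$) — the paper uses the spherical metric and does not require $\partial U\subset\D_R(0)$.
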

\begin{remark}
 An analogous statement holds for Herman rings, where the boundary will
  be contained in the limit sets of one or two recurrent or almost
  recurrent 
  singular values. 
\end{remark}
\begin{proof}
 Suppose that there is some $z_0\in \partial U$ 
  not contained in the limit set
  of any recurrent or almost recurrent
  singular value  and 
  that $z_0$ is not a parabolic periodic point. 
  Since functions with a finite set of singular values have no wandering
  domains \cite{bakerkotuslu_classS}, we are in a position to apply Theorem
  \ref{thm:maneexplicit}. It follows that there is a small disk 
  $D$ around $z_0$ such that any pullback of $D$ along the postsingular 
  set --- and, in particular, along $\partial D$ ---  undergoes at most a 
  finite amount of branching. Using standard distortion estimates
  \cite[Lemma 2.2]{juliajohn} and the fact that $z_0$ is in the Julia
  set, we see easily that 
  the spherical diameter of such pullbacks shrinks to zero
  along any backward orbit in $\partial D$, which contradicts the fact that
  $D$ intersects the Siegel disk $U$. 

 It follows that
  $\partial U$ is contained in the union of the limit sets of finitely
  many recurrent or
  almost recurrent singular values. That one of
  these contains the whole boundary follows, as in the rational case,
  from the fact that $f$ is ergodic with respect to harmonic measure
  on $\partial U$. 
\end{proof}

As far as we know, the following corollary is new even for exponential and
 trigonometric functions. 
\begin{cor}[Siegel disks of certain meromorphic functions]%
    \label{cor:certainsiegel}
  Suppose that $f$ is either an exponential map, $f(z) = \exp(z)+\kappa$,
   or a nonconstant, nonlinear meromorphic function with no
   asymptotic values and finitely many
   critical values. Assume also that the degrees of the critical points of
   $f$ are bounded by some constant $\Delta$. 

  Then the boundary of any Siegel disk of $f$ is contained in the
   limit set of some recurrent singular value.
\end{cor}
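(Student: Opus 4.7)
The plan is to deduce the statement directly from Corollary~\ref{cor:siegelstrongnonrecurrence}: I would first apply it to obtain $\partial U \subset \omega(s)$ for some singular value $s$ that is recurrent or almost recurrent, and then show that under either of the two extra hypotheses of the present corollary every nonrecurrent singular value is automatically strongly nonrecurrent, so the ``almost recurrent'' alternative collapses to ``recurrent.'' Both classes of $f$ have finitely many singular values and hence no wandering domains, so the hypotheses of Corollary~\ref{cor:siegelstrongnonrecurrence} are indeed satisfied.

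For the meromorphic case with no asymptotic values and critical-point multiplicities bounded by $\Delta$, I would fix a critical value $s$ and choose $\delta>0$ small enough that $V := \D_\delta(s)$ contains no other critical value and every component of $f^{-1}(V)$ contains at most one preimage of $s$. Since $f$ has no asymptotic values it has no transcendental singularities of $f^{-1}$, so every connected component $U$ of $f^{-1}(V)$ is a \emph{finite} branched covering of $V$ ramified only over $s$, whose degree equals the multiplicity of the unique critical point in $U$ (or $1$ if $U$ contains none). Hence $\deg(f\colon U \to V) \leq \Delta$ for every such $U$, and taking $d_s := \Delta$ shows that $s$ is strongly nonrecurrent as soon as it is nonrecurrent.

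For the exponential case $f(z) = \exp(z)+\kappa$ the unique singular value is the omitted asymptotic value $\kappa$, and for every $\delta>0$ the preimage $f^{-1}(\D_\delta(\kappa))$ consists of the single half-plane $H_\delta := \{\Re z < \log\delta\}$. If $\kappa$ were nonrecurrent then some $H_{\delta_0}$ would contain no iterate $f^n(\kappa)$, and hence by closure would be disjoint from $\P(f)$; no preimage component of $\D_{\delta_0}(\kappa)$ would meet $\P(f)$ at all, so the $d_\kappa$-controlled condition would hold vacuously and $\kappa$ would be strongly nonrecurrent.

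The only delicate point is the ``finite branched covering'' claim in the first case: one needs to be sure that the absence of asymptotic values really does rule out logarithmic-type, infinite-sheeted preimages of a small disk about a critical value, since any such preimage would itself exhibit a direct transcendental singularity and thus an asymptotic value. With that verified, each of the two cases reduces at once to Corollary~\ref{cor:siegelstrongnonrecurrence}.
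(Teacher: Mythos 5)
Your proposal is correct and follows essentially the same route as the paper's one-line proof, which asserts that under the given hypotheses every nonrecurrent singular value is automatically strongly nonrecurrent and then invokes Corollary~\ref{cor:siegelstrongnonrecurrence}. Your write-up simply supplies the verification of that assertion in each of the two cases (no asymptotic values forces each preimage component of a small disk about a critical value to be a finite branched cover of degree at most $\Delta$; for $\exp(z)+\kappa$ the unique preimage half-plane of a small disk about $\kappa$ is disjoint from $\P(f)$ once $\kappa$ is nonrecurrent).
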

\begin{proof}
  Under the given assumptions, any nonrecurrent singular value is also
  strongly nonrecurrent, so we can apply the previous corollary. 
\end{proof}

We remark that it would be nice to avoid the requirement of
 \emph{strong} nonrecurrence in Corollary \ref{cor:siegelstrongnonrecurrence}. For example,
 consider the maps $b z \exp(z)+a$, which have
 one asymptotic value at $a$ and one critical value at 
 $v=a-b/e$. Suppose that both singular values are non-recurrent, but
 $v$ accumulates on $a$. In such a situation, our theorem does not
 apply, but it does not seem unreasonable to expect that some
 form of Ma{\~n}\'e's theorem still holds. 

\section{Radial Julia sets} \label{sec:ergodic}
 \begin{defn}[Radial Julia sets] \label{defn:radialjulia}
  Let
   $f:\C\to\Ch$ be a nonconstant and nonlinear meromorphic function. The
   \emph{radial Julia set}
   $J_r(f)$ is the set of all points $z\in J(f)$ 
    with the following property:
    there is some $\delta>0$ such that, for infinitely many $n\in\N$, the 
    spherical 
    disk $\D_{\delta}^{\#}(f^n(z))$ can be pulled back univalently along the
    orbit of $z$.
 \end{defn}
 \begin{remark}[Remark 1]
  If $z\in J(f)$ and 
  $\limsup \dist^{\#}(z,\P(f))>0$, then $z\in J_r(f)$, since a disk that
  does not intersect $\P(f)$ can be pulled back univalently along any
  backward orbit. 
 \end{remark}
 \begin{remark}[Remark 2]
  It is essential to use the spherical metric in the above definition,
   rather than e.g.\ the Euclidean metric. For example, consider the
   function $f(z)=\sin(z)/2$, for which the origin attracts
   both critical values of $f$.
   Setting $\delta := \dist(\P(f),J(f))$, it follows
   that any disk of radius $\delta$ around some point of the Julia set
   can be pulled back univalently along any backward orbit.
   By a result of McMullen \cite{hausdorffmcmullen}, 
   the Julia set $J(f)$ has positive area. 
   However, by Theorem \ref{thm:ergodic},
   the area of the
   radial Julia set $J_r(f)$, as defined above, is zero.
 \end{remark}

\subsection*{Measurably transitive functions}

 It is well-known 
  that $J_r(f)$ has either full or zero Lebesgue measure
  and that, in the former case, almost every orbit is dense in the plane. 
  (For rational functions, this fact can be found in 
      \cite[page 608]{alexmishaanalytictransforms}, while it was
    proved for meromorphic functions by Bock \cite{bockthesis}.) 
  As mentioned in the introduction, we will call $f$
  \emph{measurably transitive} if  $J_r(f)$ 
  has positive (and hence full)
  measure.

 Since  \cite{bockthesis} is not
  widely available, let us sketch a proof of the above-mentioned result, 
  following McMullen's argument \cite[Theorem 3.9]{mcmullenrenormalization} 
  for the rational case. 
  We begin with a preliminary observation. 

 \begin{lemdef}[Pullbacks shrink] \label{lem:pullbacksshrink}
  Let $z\in J_r(f)$. Then there exists a disk 
   $D=\D_{\delta}^{\#}(\zeta)$ such
   that, for infinitely many $n$, the point
   $f^n(z)$ belongs to $D$ and the larger disk 
   $\D_{2\delta}^{\#}(\zeta)$
   pulls back univalently along the orbit of $z$.

  In particular, the pullbacks $D_n\ni z$ of $D$ have diameter tending to
   zero, and $f^n:D_n\to D$ has uniformly bounded distortion.

  (We will refer to $D$ as a \emph{disk of univalence} for $z$.) 
 \end{lemdef}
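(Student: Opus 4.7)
The plan is to select the disk $D$ by a compactness argument on the Riemann sphere, read off the distortion bound from the Koebe distortion theorem, and close the diameter-shrinking claim by contradiction using the existence of repelling periodic points of arbitrarily large period in any neighborhood of a Julia point.

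First, by definition of $J_r(f)$, pick $\delta_0>0$ and an infinite sequence $n_k$ such that $\D^{\#}_{\delta_0}(f^{n_k}(z))$ admits a univalent pullback along the orbit of $z$. Compactness of $\Ch$ lets me thin the sequence so that $f^{n_k}(z)\to\zeta$ for some $\zeta\in\Ch$. Taking $\delta:=\delta_0/3$, the spherical triangle inequality gives $\D^{\#}_{2\delta}(\zeta)\subset\D^{\#}_{\delta_0}(f^{n_k}(z))$ for all large $k$; since any subdisk of a univalently pullable disk is itself univalently pullable along the same orbit, this yields the desired $D:=\D^{\#}_{\delta}(\zeta)$ together with the claimed univalent pullback of the doubled disk, and $f^{n_k}(z)\in D$ for all large $k$. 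Writing $g_k:\D^{\#}_{2\delta}(\zeta)\to\C$ for the inverse branch of $f^{n_k}$ with $g_k(f^{n_k}(z))=z$, and $D_{n_k}:=g_k(D)$, the Koebe distortion theorem (applied after a M\"obius chart turning the spherical disks into standard Euclidean ones) gives a uniform bound on the distortion of $g_k|_D$, and hence on that of $f^{n_k}:D_{n_k}\to D$.

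The diameter-shrinking claim I would settle by contradiction. Suppose along a further subsequence that $\diam(D_{n_k})\geq c>0$. Univalent maps from a fixed disk into the compact sphere $\Ch$ form a normal family, so after thinning $g_k$ converges locally uniformly to a limit $g$ that is either constant or univalent; the lower bound on $\diam(g_k(D))$ rules out the constant case, leaving $g$ univalent. Then $f^{n_k}\to h:=g^{-1}$ locally uniformly on $V:=g(\D^{\#}_{2\delta}(\zeta))$, which is an open neighborhood of $z=g(\zeta)$. Since $z\in J(f)\cap V$, I invoke the density in $J(f)$ of repelling periodic points of arbitrarily large minimal period (the meromorphic analogue of Baker's theorem, due to Bergweiler) to find, for any prescribed $M$, a repelling periodic point $p\in V$ of minimal period $m\geq M$. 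The sequence $f^{n_k}(p)$ then lies in the finite cycle of $p$, and its convergence to $h(p)$ forces $n_k\pmod{m}$ to be eventually constant; imposing this for arbitrarily large $m$ forces $n_k$ itself to be eventually constant, contradicting $n_k\to\infty$.

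The main obstacle is this contradiction step: one must package the normal family convergence of the inverse branches correctly and then invoke a sufficiently strong density result for high-period repelling periodic points of transcendental meromorphic functions to rule out the non-constant limit $g$.
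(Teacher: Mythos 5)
Your handling of the first two claims (choice of $\zeta$ by compactness, shrinking the radius so the doubled disk sits inside the original, Koebe for the distortion bound) matches the paper's proof exactly. For the diameter-shrinking claim you take a genuinely different route: the paper simply notes that $\diam(D_n)\not\to 0$ would give a fixed disk $U\ni z$ with $f^n(U)\subset D$ for infinitely many $n$, ``impossible since $z\in J(f)$'' (i.e.\ by the blow-up property of the Julia set), whereas you pass to a locally uniform limit of the inverse branches and then try to contradict the existence of the limit $h$ via repelling periodic points.

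The final step of your contradiction argument has a genuine gap. From a repelling periodic point $p\in V$ of minimal period $m$ you correctly deduce that $n_k\pmod m$ is eventually constant, but it is \emph{not} true that having this for arbitrarily large $m$ forces $n_k$ to be eventually constant. For example $n_k=k!$ satisfies $n_k\equiv 0\pmod m$ for all $k\geq m$ and every fixed $m$, yet $n_k\to\infty$; so even if $V$ contained repelling cycles of every minimal period, your conclusion would not follow. The cleaner way to finish your own argument is to drop the high-period/congruence bookkeeping and use a single repelling periodic point $p\in V$ of period $m$ and multiplier $\lambda$, $|\lambda|>1$: writing $n_k=q_km+r_k$ with $0\leq r_k<m$, the chain rule gives $|(f^{n_k})'(p)|=|(f^{r_k})'(p)|\cdot|\lambda|^{q_k}\to\infty$ (the first factor is bounded below since $p$ is repelling, hence not precritical, and $q_k\to\infty$), while locally uniform convergence $f^{n_k}\to h$ on $V$ forces $(f^{n_k})'(p)\to h'(p)$, which is finite. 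This also removes the need to invoke density of high-period repelling cycles; density of repelling periodic points alone suffices.
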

\begin{proof}
 Let $\delta'$ be the constant from Definition \ref{defn:radialjulia},
  and let $n_k$ be a sequence such that $\D_{\delta'}(f^{n_k}(z))$ 
  can be pulled back
  univalently to $z$. Choosing $\zeta$ to be a limit point of the sequence
  $f^{n_k}(z)$ and letting $\delta < \delta'/2$ proves the first claim. 

 The fact that the
  $f^n$ have uniformly bounded distortion on $D_n$ follows from Koebe's
  distortion theorem \cite[Theorem 1.3]{pommerenke}. 
  If $\diam(D_n)$ did not tend to zero, then
  there would be a disk $U$ around $z$ with $f^n(U)\subset D$ for
  infinitely many $n$, which is impossible since $z\in J(f)$. 
\end{proof} 

\begin{thm}[Ergodicity]  \label{thm:ergodic}
 Suppose that $f$ is measurably transitive; that is,
  $J_r(f)$ has positive measure. Then $J_r(f)$ has full
  measure in $\Ch$ (in particular, $J(f)=\C$), and almost
  every point  
  $z\in\C$ 
  has a dense orbit. Furthermore, any set that is forward
  invariant under $f$ has either full or zero Lebesgue measure; in particular,
  the action of $f$ on $\C$ is ergodic. 
\end{thm}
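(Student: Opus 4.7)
The plan is to adapt McMullen's argument \cite[Theorem 3.9]{mcmullenrenormalization} for the rational case to our setting, combining the Lebesgue density theorem, Koebe's distortion theorem applied on the disks of univalence from Lemma \ref{lem:pullbacksshrink}, and the Picard--Montel density of backward orbits in $J(f)$.

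The central technical step I would first establish is a density-transfer lemma: if $E\subseteq\C$ is measurable with $f(E)\subseteq E$ modulo null and $z\in E\cap J_r(f)$ is a Lebesgue density point of $E$, then the disk of univalence $D=\D^{\#}_\delta(\zeta)$ for $z$ from Lemma \ref{lem:pullbacksshrink} satisfies $|D\setminus E|=0$. Indeed, the shrinking pullbacks $D_{n_k}\ni z$ satisfy $|E\cap D_{n_k}|/|D_{n_k}|\to 1$ (density at $z$); since $f^{n_k}:D_{n_k}\to D$ is univalent with uniformly bounded distortion (Koebe, using the larger disk $\D^{\#}_{2\delta}(\zeta)$ from Lemma \ref{lem:pullbacksshrink}) and $f^{n_k}(E\cap D_{n_k})\subseteq E$ by forward invariance, this transfers to $|E\cap D|/|D|=1$.

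I would then deduce $J(f)=\C$ by applying the transfer lemma with $E=J(f)$: any density point of $J(f)$ in $J_r(f)\subseteq J(f)$ yields a disk $D$ with $|D\cap F(f)|=0$, and since $F(f)$ is open this forces $D\subseteq J(f)$, so $J(f)$ has nonempty interior and therefore equals $\C$ by the standard dichotomy for meromorphic Julia sets. For ergodicity, let $E$ be forward invariant with $|E|>0$; passing to $\tilde E:=\bigcup_n f^{-n}(E)$ we may assume $E$ is completely invariant. Since $|J_r(f)|>0$, at least one of $E$ or $E^c$ (also c.i.) meets $J_r(f)$ in positive measure; the transfer lemma then gives a disk $D\subseteq E$ mod null, and complete invariance upgrades this to $W:=\bigcup_n f^{-n}(D)\subseteq E$ mod null. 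Because $J(f)=\C$, the Picard--Montel theorem forces $W$ to be open and dense in $\C$. If $|W^c|>0$, applying the transfer lemma to the closed forward-invariant set $W^c$ produces a disk $D'\subseteq W^c$ mod null, contradicting the fact that the open dense set $W$ meets $D'$ in positive measure. Hence $E$ has full measure; applying this with $E=J_r(f)$ (completely invariant modulo the null critical set) in particular yields $|J_r(f)|=|\C|$. Finally, for density of orbits, fix a countable basis $\{B_k\}$ for the topology of $\C$ and set $A_k:=\bigcup_n f^{-n}(B_k)$: this is open, nonempty and backward invariant, so its forward-invariant complement has measure zero by ergodicity, and $\bigcap_k A_k$ has full measure and consists precisely of points with dense forward orbit.

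The main obstacle is guaranteeing, in the step applying the transfer lemma to $W^c$, that $W^c$ admits a density point lying \emph{inside} $J_r(f)$, so that Lemma \ref{lem:pullbacksshrink} is available; this requires $|W^c\cap J_r(f)|>0$, which is not obvious a priori and is essentially equivalent to the conclusion $|J_r(f)|=|\C|$ itself. McMullen's rational argument resolves this by a careful simultaneous bootstrap of the two assertions rather than a sequential proof, and in the transcendental setting the noncompactness of $\C$ (with $\infty$ possibly appearing as an accumulation point of orbits or of the centers $\zeta$ of disks of univalence) forces additional care at this point, though no fundamentally new ideas.
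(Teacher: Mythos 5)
Your density-transfer lemma (shrinking univalent pullbacks from Lemma \ref{lem:pullbacksshrink}, Koebe bounded distortion, density transported from $D_n$ to $D$ via forward invariance) is exactly the core of the paper's argument and is correct. Where you diverge from the paper is in the globalization step, and there the divergence is not benign: your route has a genuine gap that you correctly flag, but the remedy is considerably simpler than the ``simultaneous bootstrap'' you attribute to McMullen, and your backward-orbit strategy actually introduces two separate problems.

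First, the reduction ``passing to $\tilde E=\bigcup_n f^{-n}(E)$ we may assume $E$ is completely invariant'' does not preserve the statement to be proved. Since $E$ is forward invariant, $f^{-n}(E)$ increases to $\tilde E\supseteq E$, so full measure of $\tilde E$ gives no information about the measure of $E$; yet the theorem asserts the zero/full dichotomy for arbitrary \emph{forward}-invariant sets. Second, the observation that $W=\bigcup_n f^{-n}(D)$ is open and dense is a category-theoretic statement with no measure-theoretic content (an open dense set can have arbitrarily small measure), so you are forced into the second application of the transfer lemma to $W^c$, and there, as you say, you cannot a priori locate a density point of $W^c$ inside $J_r(f)$.

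The paper sidesteps both problems with one move: it applies the transfer lemma not to $F$ but to $X:=F\cap J_r(f)$. Since $J_r(f)$ is itself forward invariant, so is $X$; by construction every density point of $X$ already lies in $J_r(f)$, so Lemma \ref{lem:pullbacksshrink} is available without any circularity; and taking $F=J_r(f)$ (so that $X=J_r(f)$ has positive measure by hypothesis) immediately bootstraps $|J_r(f)|$ to full measure, after which $|F\cap J_r(f)|>0$ is equivalent to $|F|>0$ for any $F$. For globalization one then pushes \emph{forward} rather than pulling back: $X$ has full measure in $D$, analytic maps carry null sets to null sets, so $f^n(X\cap D)\subseteq X$ has full measure in $f^n(D)$, and the blow-up property of $J(f)$ (which $D$ intersects, because $f^n(z)\in D$ for some $n$ and $z\in J(f)$) gives $\bigcup_n f^n(D)=\Ch$ minus a finite exceptional set. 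This uses only forward invariance of $X$ --- no passage to $\tilde E$, no Baire category, and no second density point --- which is precisely why the gap you identify does not arise in the paper's proof.
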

\begin{proof}
 Notice that $J_r(f)$ is forward invariant. 
  Let $F$ be any forward-invariant set such that $X := F\cap J_r(f)$ 
  has positive
  measure. We will show that $X$ has full Lebesgue measure in the 
  sphere. 

 Let $z$ be a Lebesgue density point of $X$, and let
  $D$ be a disk of univalence for $z$. Then
     \[ \frac{\meas(X\cap D_n)}{\meas(D_n)}\to 1 \]
  (where $D_n$ is the pullback from
  Lemma \ref{lem:pullbacksshrink}). Since the distortion of
  $f^n|_{D_n}$ is bounded independently of $n$, it follows that
    \[ \frac{\meas(D\setminus X)}{\meas(D)} \leq
         K\cdot \frac{\meas(D_n\setminus X)}{\meas(D_n)} \to 0 \]
   for some constant $K$. Hence 
   $\meas(X\cap D)=\meas(D)$; i.e.,  $X$ has
   full measure in $D$. 
   As $X$ is forward-invariant and $D$ is an open disk
   intersecting the Julia set, 
   it follows that $X$ has full Lebesgue measure in the plane.

 In particular, $J_r(f)$ itself
  has full measure, and any forward invariant set
  has either full or zero measure in the plane. Note furthermore
  that the set of points whose orbits never enter a given open set 
  $U\subset\Ch$ is forward invariant. Since this set
  is disjoint from the positive measure set $U$, it must have zero Lebesgue
  measure. Hence almost every point has a dense orbit. 
\end{proof}

\subsection*{Remarks about the definition of the radial Julia set}

Radial Julia sets, as the sets where it is possible to go from small
 to large scales via univalent iterates, were originally introduced
 for rational functions. The concept of measurable transivity
 appears to have made its first (implicit) appearance in
 \cite{mishatypical}, while the radial Julia set was explicitly
 introduced in \cite{urbanskietalconformal,mcmullenradial}. 
 The name ``conical Julia set'' is also sometimes
 used (there are in fact a number
 of different definitions of conical Julia sets; see
 \cite{przytyckiconical} for a discussion and interesting results about their
 relation to each other); the
 term ``radial Julia set'' was coined by McMullen 
 \cite{mcmullenradial} who used it 
 in analogy to his studies of Kleinian groups. 

The radial Julia set plays a fundamental role in the study of measurable
 properties of conformal dynamical systems, particularly in the
 transcendental case, where $J_r(f)$ may frequently be much smaller
 than $J(f)$. E.g.\ $J(f)$ may have positive measure, while $J_r(f)$ has
 Hausdorff dimension strictly less than two, see below. 
 (Work by Avila and Lyubich \cite{avilalyubichfeigenbaum2}
  suggests that this may also happen for rational functions, even
  quadratic polynomials. On the other hand, it follows from recent
  work of Buff and Ch\'eritat \cite{buffcheritatarea} that there are
  also quadratic polynomials for which $J(f)$ has positive measure and
  $J_r(f)$ has Hausdorff dimension $2$.) 
  In the rational case, it is known 
  \cite{urbanskietalconformal} that
  the Hausdorff dimension of $J_r(f)$ coincides with several other
  dynamically 
  important quantities, including Shishikura's \emph{hyperbolic dimension},
  the supremum over the dimensions of all hyperbolic subsets of
  $J(f)$.

In the transcendental setting, the fact that 
 the hyperbolic dimension need not equal the dimension of the 
 full Julia set is implicit already in the work of Stallard 
 \cite{stallardhyperbolicmero}. (Stallard discusses 
 \emph{critical Poincar\'e exponents} rather than the hyperbolic dimension, 
 but it is easy to see that the former is an upper bound for the latter.) 
 A closer investigation of this phenomenon 
 was initiated 
 by Urba\'nski and Zdunik \cite{urbanskizdunik1}, who considered 
 the case where $f$ is a hyperbolic exponential map 
 $f(z)=\lambda \exp(z)$ with a single attracting basin. 
 Here, 
 $J_r(f)$ is $J(f)$ minus the set 
\[ I(f) := \{z\in\C:f^n(z)\to\infty\} \] 
 of escaping points, and Urba\'nski and Zdunik prove a number of 
 fundamental results. In particular, they show that the Hausdorff-dimension
 $h$ of $J_r(f)$ lies strictly between $1$ and $2$ (while 
 $J(f)$ has dimension $2$ by a result of McMullen) and agrees with the
 hyperbolic dimension of $f$.
 They also construct natural
 conformal measures and invariant measures supported on the set $J_r(f)$.   

 In \cite{urbanskizdunik2}, a similar program was carried out for
 the \emph{non-hyperbolic} 
 case of an exponential map whose singular value $0$ escapes to infinity 
 ``sufficiently fast'' under iteration; here $J_r(f)$ is the set of points
 that do not accumulate on the singular orbit. 
 This program has subsequently been adapted to a variety of situations where
 there is good control over the postsingular set, and it seems likely that
 a good understanding of
 the set $J_r(f)$ will be the key to studying the measurable dynamics of
 more general transcendental functions. It is possible to show
 \cite{hypdim} that, 
 as in the rational case, the hyperbolic dimension of $f$
 and the Hausdorff dimension of
 $J_r(f)$ always coincide. In this context, let us note the following
 simple fact, which we will use to 
 deduce Theorem \ref{thm:manemain}
 from the
 results of the previous section.

\begin{lem}[Hyperbolic sets and $J_r(f)$] \label{lem:hyperbolicsets}
 Let $K\subset\C$ be a hyperbolic set. Then $K\subset J_r(f)$. Conversely,
  every compact, forward invariant set $K$ with $K\subset J_r(f)$ is
  a hyperbolic set. 
\end{lem}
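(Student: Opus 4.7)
The plan is to prove the two implications separately: the forward direction is essentially constructive, while the converse requires a compactness argument to promote pointwise expansion to uniform expansion on $K$.

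\textbf{Forward direction.} Suppose $K$ is a hyperbolic set, so that $|(f^k)'|>\eta>1$ on $K$ for some $k\in\N$ and $\eta>1$. Then $K\cap\Crit(f^k)=\emptyset$, and by continuity and compactness I can pick an open neighborhood $V$ of $K$, disjoint from $\Crit(f^k)$, on which $|(f^k)'|\geq\eta'>1$. Compactness of $K$ also yields a uniform spherical radius $\delta>0$ such that for every $w\in K$ the inverse branch of $f^k$ mapping $f^k(w)$ to $w$ is defined on $\D^{\#}_\delta(f^k(w))$ and takes values inside $V$ (the expansion bound forces its image into a spherical disk of comparable size about $w$, so these inverse branches can be composed indefinitely). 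For any $z\in K$ and $n\in\N$, forward invariance gives $f^{jk}(z)\in K$ for all $j$, so composing $n$ of these inverse branches pulls $\D^{\#}_\delta(f^{nk}(z))$ univalently back to a neighborhood of $z$, and hence $z\in J_r(f)$.

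\textbf{Converse.} Suppose $K\subset J_r(f)$ is compact and forward invariant. Then $K\cap\Crit(f)=\emptyset$ (a critical point admits no univalent pullback at the final backward step), and since $K\subset\C$ is compact with $f(K)\subset K$, the map $f$ has no poles on $K$, so $M:=\sup_K|f'|_\#$ is finite (and at least $1$, as will be seen). For each $z\in K$, Lemma \ref{lem:pullbacksshrink} gives a spherical disk of univalence $D_z$; the pullbacks of $D_z$ along the orbit of $z$ are univalent with bounded spherical distortion and shrink to $z$, so by Koebe $|(f^m)'(z)|_\#\to\infty$ along some sequence $m=m_j(z)\to\infty$. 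Pick $n(z)\in\N$ with $|(f^{n(z)})'(z)|_\#>3$; by continuity this persists with constant $>2$ on an open neighborhood $U(z)\ni z$, and a finite subcover $U(z_1),\ldots,U(z_r)$ yields associated iterates $n_1,\ldots,n_r$ bounded by some $L:=\max_i n_i$. Any forward orbit starting in $K$ decomposes into consecutive blocks of lengths drawn from $\{n_1,\ldots,n_r\}$, each contributing a factor greater than $2$ via the chain rule, so after $N$ blocks of total length $T=\sum_{\ell\leq N} n_{i_\ell}$ one has $|(f^T)'(z)|_\#\geq 2^N$. Given $k\in\N$, stopping at the first $N$ with $T\geq k$ gives $0\leq T-k<L$ and hence
\[ |(f^k)'(z)|_\#\;=\;\frac{|(f^T)'(z)|_\#}{|(f^{T-k})'(f^k(z))|_\#}\;\geq\;\frac{2^N}{M^L}\;\geq\;\frac{2^{k/L}}{M^L}, \]
where I have used forward invariance and the bound $M$ on the spherical derivative along $K$. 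For $k$ large enough this exceeds any prescribed $\tilde\eta>1$, uniformly in $z\in K$. Since Euclidean and spherical derivatives are comparable on the compact set $K\subset\C$, the same holds with the Euclidean derivative after adjusting $\tilde\eta$, so $K$ is a hyperbolic set.

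\textbf{Main obstacle.} The substantive difficulty is extracting a \emph{single} iterate $k$ that expands uniformly on $K$: the sequences $m_j(z)$ supplied by the definition of $J_r(f)$ depend on $z$ and are not a priori synchronized. The block-decomposition and sub-multiplicativity argument, combined with the boundedness of $|f'|_\#$ on the compact forward-invariant set $K$, resolves this. The remaining ingredients --- Koebe distortion, persistence of expansion by continuity, and comparability of Euclidean and spherical derivatives on compact subsets of $\C$ --- are routine.
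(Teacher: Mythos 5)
Your proof is correct and follows essentially the same strategy as the paper: for the forward direction, use compactness and the expansion estimate to obtain a uniform radius $\delta$ so that the pullback of $\D_\delta(f^k(w))$ lands inside $\D_\delta(w)$ and can be iterated; for the converse, extract a finite cover with local expansion $>2$ on each piece, decompose orbits into blocks, and apply the chain rule. The only cosmetic difference is in handling the block remainder --- the paper multiplies by $\theta^p$ with $\theta=\min_K|f'|$ and $p<N$, while you overshoot past $k$ and divide by $M^{T-k}$ with $M=\sup_K|f'|_\#$ --- and you work with spherical derivatives before converting at the end, whereas the paper stays Euclidean throughout; both variants are equally valid.
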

\begin{proof}
 Let $K$ be a hyperbolic set. Then we can pick $k\in\N$ and a bounded open
  neighborhood $W$ of $K$ such that $|(f^k)'(z)|>\eta>1$ for all
  $z\in W$. 
%By passing to an iterate, we may assume without loss of generality that $k=1$.
  For every $w\in f^k(K)$, pick an  open
  neighborhood 
  $U(w)$ small enough so that every component of $f^{-k}(U(w))$ that
  intersects $K$ is completely contained in $W$. 

 The sets $U(z)$ form an open covering of $f^k(K)$; let $\delta$
  be its Lebesgue covering number.  Now let 
  $z_0\in K$ and $w_0:= f^k(z_0)$, and let $V$ be the component of
  $f^{-k}(\D_{\delta}(w_0))$ containing $z$. Then $V\subset W$ and 
  hence there
  is a branch $\phi:\D_{\delta}(w_0)\to V$ of $f^{-k}$. Since
  $|\phi'(w)|<1/\eta$ for all $w\in\D_{\delta}(w_0)$, it follows that
  $V\subset \D_{\delta}(z_0)$.

 It now follows by induction that $\D_{\delta}(w_0)$ can be pulled back
  univalently along any backward orbit of $w_0$ that is contained in 
  $K$. Hence $K\subset J_r(f)$, as claimed.

\smallskip

 For the converse direction, let $K\subset J_r(f)$ be compact
  and forward invariant. We may assume that
  $\theta := \min_{z\in K} |f'(z)| \leq 1$, as otherwise there
  is nothing to prove.

  Let $z_0\in K$. It follows from Lemma
  \ref{lem:pullbacksshrink} 
  that $\limsup |(f^n)'(z_0)|=\infty$, so we can pick some
  $n(z_0)$ and some open neighborhood $U(z_0)$ of $z_0$ such that
  $|(f^{n(z_0)})'(z)|\geq 2$ for all $z\in U(z_0)$. 

 Since $K$ is compact, it is covered by finitely many such neighborhoods;
   let us call them $U_i$ and denote the corresponding numbers
   $n(z_0)$ by $n_i$. 
   Define $N:= \max_i n_i$, and let $m$ be sufficiently large
   that 
      \[ 2^m > \frac{2}{\theta^{N}}. \]
  Now we set $n := m\cdot N$; we claim that
   $|(f^n)'(z)|>2$ for all $z\in K$. 

 To prove this claim, define sequences
   $j_k$ and $z_k$ inductively by setting $z_0 := z$, choosing 
   $j_k$ such that $z_k \in U_{j_k}$, and defining
   $z_{k+1} := f^{n_{j_k}}(z_k)$. 

  Let $k$ be maximal with
   $p := n - n_{j_0}-n_{j_1}-\dots-n_{j_k} \geq 0$. Then $p<N$ and $k\geq m$ by choice of $n$. 
    We can now write
    \[ f^n(z) = f^p(z_k) = f^p(f^{n_{j_k}}(z_{k-1})) = \dots = 
        f^p(f^{n_{j_{k}}}(f^{n_{j_{k-1}}}(\dots (f^{n_{j_0}}(z))\dots))). \]
   So by the chain rule, 
     \[ |(f^n)'(z)|\geq 
           \theta^p \cdot 2^k \geq \theta^N\cdot 2^m > 2 \]
   by choice of $m$.
\end{proof}

\begin{proof}[Proof of Theorem \ref{thm:manemain}]
 We recall the setting of the theorem: $f$ is a nonconstant, nonlinear
  meromorphic function with $\P(f)\cap\C$ bounded,
  $K\subset\C$ is compact and forward invariant 
  and contains no critical points or parabolic periodic points. Also,
  no point of $K$ is the accumulation point of a recurrent critical orbit,
  or of a singular orbit contained in wandering domains. 

 We note that, since $\P(f)$ is bounded, the set of recurrent critical
  points is finite, and a singular value can be recurrent
  only if it is the image of a recurrent critical point. Furthermore,
  every nonrecurrent singular value is also strongly nonrecurrent. 
  Hence every $z_0\in K$ satisfies the assumptions of Theorem
  \ref{thm:maneexplicit}, and therefore is regular in the sense of
  Definition \ref{defn:regularity}. Let $\eps>0$ be the Euclidean distance
  between $K$ and the set of critical points of $f$; by Lemma
  \ref{lem:pullbacks}, there is a disk $U$ around $z_0$ 
  such that every 
  component of $f^{-n}(U)$ that intersects $K$ is mapped properly
  by $f^n$ and does
  not pass through any critical points. Hence every such pullback
  is univalent, and any point $z\in K$ that enters the disk $U$ infinitely
  many times belongs to $J_r(f)$.

 Since every orbit in $K$ will accumulate on some $z_0\in K$, we see that
  $K\subset J_r(f)$. The claim that $K$ is hyperbolic now follows
  from the preceding lemma.
\end{proof}

\subsection*{Branched versions of $J_r(f)$}
 In view of Ma{\~n}\'e's theorem, we might want to consider
  analogs of the definition of $J_r(f)$ that do not require univalent
  pullbacks, but allow a bounded degree of branching. (Compare also
  \cite{martinmayer}, where this is the definition used for the conical
  Julia set.) 

 \begin{defn}[$\Delta$-branched radial Julia set]
  Let $\Delta\in\N$. We denote by $J_r^{\Delta}(f)$ the set of points
   $z\in J(f)$ with the following property. There is a number $\delta>0$
   such that, for infinitely many $n$, the component of
   $f^{-n}(\D^{\#}_{\delta}(f^n(z)))$ containing $z$ is simply
   connected and mapped by
   $f^n$ as a proper map of degree at most $\Delta$. 
 \end{defn}
 Note that, if $z_0$ is a regular point in the
  sense of Definition \ref{defn:regularity} and the orbit of 
  $z$ is bounded and accumulates on 
  $z_0$, then, by Lemma \ref{lem:pullbacks},  $z\in J_r^{\Delta}(f)$ for some
  $\Delta\in\N$. 

 At least in the cases of interest to us, the extra 
  generality does not gain us much in the set $J_r^{\Delta}(f)$.

 \begin{lem}[$J_r^{\Delta}(f)$ and $J_r(f)$] \label{lem:JrD}
  Suppose that $J_r^{\Delta}$ has positive measure for some $\Delta\in\N$.
   Then $J(f)=\C$ and almost every point in $\C$ has a dense orbit. 

 In particular, if $\P(f)\neq \Ch$, then 
  $J_r^{\Delta}(f)\setminus J_r(f)$ has zero measure.
 \end{lem}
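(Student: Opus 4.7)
The plan is to adapt the proof of Theorem~\ref{thm:ergodic} to the bounded-degree setting. First I would prove a branched analogue of Lemma~\ref{lem:pullbacksshrink}: for each $z \in J_r^{\Delta}(f)$, there exist a spherical disk $D = \D^{\#}_{\delta}(\zeta)$ (with $\zeta$ a limit point of the forward orbit of $z$) and an infinite sequence of $n$ with $f^n(z) \in D$ such that the component $U_n \ni z$ of $f^{-n}(\D^{\#}_{2\delta}(\zeta))$ is simply connected and $f^n|_{U_n}$ is proper of degree at most $\Delta$. Letting $D_n \ni z$ denote the component of $f^{-n}(D)$ inside $U_n$, Lemma~\ref{lem:diam} gives $\diam_{U_n}(D_n) \le C(\Delta, 1/2)$; combining this with $z \in J(f)$ (which rules out the possibility that a fixed Euclidean disk around $z$ is mapped into $D$ by infinitely many $f^n$) forces $\diam^{\#}(D_n) \to 0$.

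The key technical step is a uniform area-distortion estimate: there is $K = K(\Delta)$ such that for every measurable $A \subset D$,
\[ K^{-1} \cdot \frac{\meas(A)}{\meas(D)} \;\leq\; \frac{\meas(f^{-n}(A) \cap D_n)}{\meas(D_n)} \;\leq\; K \cdot \frac{\meas(A)}{\meas(D)}. \]
Writing $\phi_n : \D \to U_n$ for a Riemann map with $\phi_n(0) = z$, the composition $g_n := f^n \circ \phi_n$ is a proper holomorphic map from $\D$ onto $\D^{\#}_{2\delta}(\zeta)$ of degree at most $\Delta$, hence (after an affine identification of the target with $\D$, which is harmless once $\zeta$ is placed in $\C$ by a M\"obius change of coordinates) a Blaschke product. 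Because $\phi_n^{-1}(D_n)$ has bounded hyperbolic diameter in $\D$ and contains $0$, Koebe distortion for $\phi_n$ together with uniform derivative-ratio bounds for Blaschke products of degree at most $\Delta$ on hyperbolically compact subsets of $\D$ gives the displayed estimate by a direct change of variables. The same estimates show that $D_n$ is Euclidean-round at its natural scale $r_n \to 0$, so ordinary Lebesgue density with respect to Euclidean balls around $z$ transfers to density with respect to the $D_n$.

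The ergodicity argument of Theorem~\ref{thm:ergodic} then goes through essentially verbatim. If $F \subset \C$ is forward-invariant and $X := F \cap J_r^{\Delta}(f)$ has positive measure, pick a Lebesgue density point $z \in X$ equipped with a branched disk of univalence as above. Since the density of $X$ in $D_n$ tends to $1$, the area-distortion bound propagates this to full density of $X$ in $D$; forward invariance of $X$ together with the density of $\bigcup_n f^{-n}(D)$ in $\C$ then yields full measure in $\Ch$. Setting $F = J_r^{\Delta}(f)$ shows that $J_r^{\Delta}(f)$ has full measure and hence $J(f) = \C$; setting $F$ equal to the complement of the full backward orbit of a given open set shows that almost every orbit is dense.

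For the last statement, suppose $\P(f) \neq \Ch$ and fix any spherical disk $U \subset \Ch \setminus \P(f)$. By the dense-orbit conclusion, almost every $z \in \C$ satisfies $f^n(z) \in U$ for infinitely many $n$, and for each such $n$ a fixed smaller sub-disk of $U$ pulls back univalently along the orbit of $z$ because $U$ is disjoint from $\P(f)$; so $z \in J_r(f)$, and $J_r^{\Delta}(f) \setminus J_r(f)$ has measure zero. The hardest point is the area-distortion estimate in the second paragraph: proper maps of bounded degree do not enjoy the full Koebe distortion theorem that univalent maps do, so some care is needed in extracting the estimate from the Blaschke product factorization; once it is in hand, the rest is a direct transcription of the argument for Theorem~\ref{thm:ergodic}.
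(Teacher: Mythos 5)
Your proposal matches the paper's approach exactly: the paper's proof is a two-sentence sketch saying to carry out the argument of Theorem~\ref{thm:ergodic} using a branched analogue of Lemma~\ref{lem:pullbacksshrink}, and then to pull back a disk disjoint from $\P(f)$ univalently for the final claim --- which is precisely your plan, and the details you fill in are correct where they are needed.

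One flaw worth flagging: the \emph{upper} inequality in your displayed ``key technical step''
\[ K^{-1} \cdot \frac{\meas(A)}{\meas(D)} \;\leq\; \frac{\meas(f^{-n}(A) \cap D_n)}{\meas(D_n)} \;\leq\; K \cdot \frac{\meas(A)}{\meas(D)} \]
is false in general. The Jacobian of $f^n$ degenerates at critical points inside $D_n$, so areas near a critical value are \emph{expanded} by pullback. Take $g_n(w)=w^2$, target disk $\D_\rho$, so the corresponding $E=\D_{\sqrt\rho}$; with $A=\D_\eps(0)$ the left-hand density is $\eps/\rho$ while the right-hand is $\eps^2/\rho^2$, and the ratio blows up as $\eps\to 0$. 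Fortunately, what the ergodicity argument needs is only the \emph{lower} bound applied to $A=D\setminus X$ (using that $f^{-n}(D\setminus X)\cap D_n\subset D_n\setminus X$ by forward invariance), and that direction is correct: it follows from the degree identity $\int_{f^{-n}(A)\cap D_n}|(f^n)'|^2=\deg\cdot\meas(A)\ge\meas(A)$ together with an upper bound on $|(f^n)'|$ over $D_n$ coming from the bounded derivative of a Blaschke product of degree $\le\Delta$ on a hyperbolically compact subset of $\D$ and the Koebe lower bound on $|\phi_n'|$. So your argument goes through, but the two-sided claim should be replaced by this one-sided estimate. A second, smaller point: you assert without proof that the pullback component $U_n$ of $\D^{\#}_{2\delta}(\zeta)$ is simply connected; this does hold, since a preimage component of a simply connected subdomain under a proper map between simply connected domains is always simply connected, but it deserves a remark.
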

 \begin{proof}
  We can carry through
   the proof of Theorem
   \ref{thm:ergodic} analogously (replacing Lemma \ref{lem:pullbacksshrink}
   by a suitable version for branched pullbacks). It follows that
   almost every point in $\C$ has a dense orbit. 

  If $\P(f)\neq\Ch$, then almost
   every orbit 
   enters some disk in $\Ch\setminus \P(f)$ infinitely many times,
   and this disk can be pulled back univalently as in the definition of
   $J_r(f)$. This proves the claim. 
 \end{proof}
 \begin{remark}
  The condition $\P(f)\neq\Ch$ is likely not necessary, but we only
   require the result in this case. 
 \end{remark}

\section{Line fields} \label{sec:linefields}

 A \emph{line field} $\nu$ on a set $A$ is a measurable choice of a tangent
  line in each point of $A$. Equivalently, a line field is a measurable
  Beltrami differential $\mu\, d\bar{z}/dz$ with $|\mu|=1$.
  (See \cite[Section 3.5]{mcmullenrenormalization}.) 
  Such a line field $\mu$ is \emph{invariant} under $f$ if
  $f^*(\mu)=\mu$ almost everywhere. We say that
  $f$ \emph{supports an invariant line field on the set $X$} if there
  is an invariant line field on some positive measure subset $A$ of $X$.

\begin{defn}[Univalent line field]
 The line field $\mu$ is called
  \emph{univalent} if for each point $z\in A$ 
  there is a neighbourhood  $U$ such that
  $\mu|_{U} = \phi^*(d\bar{z}/dz)$ for
  some univalent function $\phi\colon U\to \C$. 
  (Such a function $\phi$
  will be called a \emph{linearizing coordinate} for $\mu$.)
\end{defn}
\begin{remark}
  Univalence of a line field
   is a \emph{local} property; in particular, we do not
   require that the function $\phi$ is defined on a neighborhood of
   $A$. 
\end{remark}

 If the standard line field $d\bar{z}/dz$
   is invariant under some analytic function
  $f$, then $f'(z)$ must be a real constant, and hence $f$ is affine. 
  It follows that the preimages of straight horizontal and vertical lines
  under linearizing coordinates $\phi$ form two (transverse) analytic
  foliations on any set where $\mu$ is univalent. If $\mu$ is invariant
  under an analytic function $f$, 
  then these foliations are also invariant. 

 Let us establish some preliminaries regarding line fields that
  are univalent everywhere; in particular we will show that an entire
  function cannot have an invariant line field that is univalent at every
  point of $\C$.
  Both of the following statements are implicit in
  \cite[Proof of Theorem 2]{graczykkotusswiatek}, but we will state and prove
  them here explicitly for the reader's convenience. 

 \begin{lem}[Univalent line fields on simply connected surfaces]
    \label{lem:globalunivalent}
  Let $X$ be a simply connected Riemann surface (i.e.,
   $X$ is either the plane, the sphere or the disk).
   Suppose that $\mu$ is a line field that is
   locally univalent on all of $X$. Then
   $\mu$ is the pullback of the standard line field
    $d\bar{z}/dz$ under a nonconstant
    analytic function $g:X \to\C$ with no
    critical points. In particular, $X$ cannot be the Riemann sphere.
 \end{lem}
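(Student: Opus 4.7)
The plan is to construct the map $g$ by globalizing the local linearizing coordinates $\phi_z$, exploiting the simple connectedness of $X$. The starting observation is that a holomorphic self-map of $\C$ preserving the standard line field $d\bar z/dz$ must, as noted immediately before the lemma, have real derivative and hence be a real affine map $w\mapsto aw+b$ with $a\in\R^*$, $b\in\C$. Consequently, any two linearizing coordinates $\phi_1,\phi_2$ for $\mu$ on overlapping neighborhoods satisfy $\phi_2=a\phi_1+b$ with $a\in\R^*$, and in particular the logarithmic derivative $\phi_z''/\phi_z'$ is independent of the choice of linearizing coordinate.

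For $X=\Ch$ the ``in particular'' statement can be dispensed with up front: a univalent line field is locally real-analytic, hence continuous; but $\Ch\cong S^2$ admits no continuous line field, since any line field on a simply connected surface lifts to a nowhere-zero continuous vector field, contradicting the hairy ball theorem. Thus for $X=\Ch$ the hypothesis is vacuous, and we may assume $X\in\{\C,\D\}$, which carries a global holomorphic coordinate $z$.

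In this coordinate $\eta(z):=\phi_z''/\phi_z'$ is a globally defined holomorphic function on $X$. Use simple connectivity twice: choose a primitive $F$ of $\eta$, and then let $g:X\to\C$ be a primitive of $e^F$. The resulting $g$ is holomorphic with $g'=e^F$ nowhere zero, so $g$ has no critical points. The identity $F'=\phi_z''/\phi_z'$ gives, on each patch, $g'=c\phi_z'$ and therefore $g=c\phi_z+d$ for some $c\in\C^*$, $d\in\C$, so that locally $g^*(d\bar w/dw)=(\bar c/c)\mu$.

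It then remains to check that $\bar c/c$ is globally constant and can be absorbed into $g$. Tracking how $c$ transforms across overlaps via $\phi_2=a\phi_1+b$ with $a\in\R^*$ shows that $c$ is multiplied by a real factor when passing between patches, so $\bar c/c$ is a single unimodular constant $\lambda\in S^1$. Replacing $g$ by $e^{i\theta}g$ with $e^{-2i\theta}=\bar\lambda$ yields the desired global linearizing map with $g^*(d\bar w/dw)=\mu$. The main obstacle I anticipate is this bookkeeping: identifying precisely which group (the real affine group) acts on the local linearizing coordinates, and then confirming that the natural invariant $\phi_z''/\phi_z'$ really patches into a globally defined holomorphic function whose two successive integrations on the simply connected $X$ produce a single-valued $g$ with the correct phase.
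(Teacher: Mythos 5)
Your proof is correct, but it takes a genuinely different route from the paper's. The paper extends a single local linearizing coordinate $\phi_1$ by analytic continuation: it shows that on an overlap $U_1\cap U_2$ the transition map $\phi_1\circ\phi_2^{-1}$ is a global real affine map $A$, so that $A\circ\phi_2$ glues with $\phi_1$; the monodromy theorem on the simply connected $X$ then gives a single-valued global $g$, and the sphere case is excluded at the end because a nonconstant holomorphic $g:\Ch\to\C$ cannot exist. Your argument instead identifies the pre-Schwarzian $\eta=\phi''/\phi'$ as an \emph{invariant} of the real affine group acting on linearizing charts, so that $\eta$ is a globally defined holomorphic function from the start; two integrations on the simply connected $X$ then reproduce $g$, and a separate bookkeeping step shows the residual unimodular factor $\bar c/c$ is a global constant that can be rotated away. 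You also dispose of $X=\Ch$ up front by a topological argument (a univalent line field is real-analytic, lifts to a nowhere-vanishing continuous vector field on the simply connected $S^2$, contradicting the hairy ball theorem) rather than deriving it as a corollary of the existence of $g$. Both routes are valid; the paper's is the more elementary (no auxiliary invariant, no phase correction), while yours isolates the correct global holomorphic object directly and makes the role of simple connectivity (two integrations) very transparent. One small point worth making explicit in your write-up: the reason two linearizing coordinates on a connected overlap differ by a real affine map is that $\phi_2'/\phi_1'$ is both holomorphic and real-valued, hence constant --- you use this implicitly both to show $\eta$ is well-defined and in the later $\bar c/c$ bookkeeping.
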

 \begin{proof}
  Let $U_1, U_2\subset X$ be simply connected. Assume that
   $U:=U_1\cap U_2$ is connected, and that
   there are functions $\phi_j:U_j\to\C$ satisfying
  \begin{equation}
    \mu = \phi_j^*(d\bar{z}/dz). \label{eqn:pullback}
  \end{equation}
  We claim that $\phi_1$ extends holomorphically to $U_1\cup U_2$,
   with the extension still satisfying (\ref{eqn:pullback}). 

  Indeed, consider the map 
   $\psi:= \phi_1\circ \phi_2^{-1}$, which maps
   $\phi_2(U)$ to $\phi_1(U)$ conformally. Then
   $\psi^*(d\bar{z}/dz)=d\bar{z}/dz$. This means that
   $\psi'$ is a real constant on $\phi_2(U_1\cap U_2)$
   and therefore wherever
   $\psi$ is defined it is equal to some affine map $A$
   of the form $A(z) = \lambda z + c$ 
   (with $c\in\C$ and $\lambda\in\R$). 
   It follows that the map
     \[ \phi(z) := \begin{cases}
                       \phi_1(z) & z\in U_1 \\
                       A(\phi_2(z)) & z\in U_2 \end{cases} \]
   is the desired holomorphic extension of $\phi_1$. 

  It now follows that $\phi_1$ can be extended analytically
   along any path $\gamma\subset X$, with the extension  satisfying
   (\ref{eqn:pullback}). (We cover $\gamma$ by neighborhoods
   $U_1, U_2,\dots, U_n$, with associated maps  $\phi_j$, 
   such that $U_j\cap U_{j+1}$ is connected. Then we apply our observation
   above.) By the monodromy theorem, this extension is defined on all of
   $X$, and we are done. 
 \end{proof}

\begin{cor}[Entire functions with invariant univalent line fields]
  \label{cor:entireunivalent}
  Let $f:\C\to\C$ be a nonconstant entire
  function, and suppose
   there is a
   univalent line field $\mu$ on $\C$ 
   that is invariant under
   $f$. 

  Then $f$ is an affine map. 
\end{cor}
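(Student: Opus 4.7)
The plan is to invoke Lemma~\ref{lem:globalunivalent} with $X=\C$ to get a global linearizing coordinate for $\mu$, convert the $f$-invariance into a functional equation relating $g$ and $f$, and then force $f$ to be affine by a growth comparison.

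By Lemma~\ref{lem:globalunivalent} there is a nonconstant entire function $g:\C\to\C$ with $g'$ nowhere zero and $\mu = g^*(d\bar z/dz)$. The Beltrami coefficient of $\mu$ is $\overline{g'}/g'$, so the invariance $f^*\mu=\mu$ translates into
\[
\frac{\overline{(g\circ f)'(z)}}{(g\circ f)'(z)} \;=\; \frac{\overline{g'(z)}}{g'(z)}
\]
almost everywhere on $\C$. Equivalently, the function $h(z):=(g\circ f)'(z)/g'(z)$ (entire, since $g'$ has no zeros) is real-valued almost everywhere, hence everywhere by analyticity, and therefore equal to some real constant $\lambda$. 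Integrating $(g\circ f)'=\lambda g'$ yields the functional equation
\[
g\circ f \;=\; \lambda\, g + c \qquad (\lambda\in\R,\ c\in\C).
\]
Because $f$ is nonconstant, $\lambda\neq 0$; and because $(g\circ f)'=\lambda g'$ vanishes nowhere, $f$ has no critical points either. In particular $f$ is either affine or transcendental entire, since a polynomial of degree $\ge 2$ must have a critical point.

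If $g$ is itself a polynomial, then the zero-free polynomial $g'$ is a nonzero constant, so $g$ is affine; substituting $g(z)=\alpha z+\beta$ into the functional equation then makes $f$ affine by a one-line calculation. If $g$ is transcendental, I would suppose for contradiction that $f$ is also transcendental and compare growth. Writing $M(r,\cdot)$ for the maximum of the modulus on $\{|z|=r\}$, the functional equation immediately gives
\[
M(r, g\circ f) \;\le\; |\lambda|\, M(r, g) + |c|,
\]
so $M(r,g\circ f)/M(r,g)$ remains bounded in $r$. But by the classical theorem of P\'olya on compositions of entire functions, when both $g$ and $f$ are transcendental entire one has $M(r,g\circ f)/M(r,g)^k\to\infty$ for every $k>0$; in particular the ratio $M(r,g\circ f)/M(r,g)$ is unbounded, contradicting the previous line. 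Hence $f$ must be affine.

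I expect the main obstacle to be the growth comparison in the transcendental case: one needs P\'olya's lower bound on $M(r,g\circ f)$, and one must verify that it applies to any transcendental $g$ (including those of infinite order) rather than only to $g$ of finite order. A self-contained alternative would be to exploit the functional equation directly, noting that if $g$ has a Picard exceptional value $w_0$ then the equation forces $c=(1-\lambda)w_0$, whence $g-w_0=e^h$ for some entire $h$ with $h'$ nowhere zero, reducing the problem to the simpler equation $h\circ f = h + C$; but ruling out a transcendental $g$ with no exceptional values at all would still require some kind of growth comparison.
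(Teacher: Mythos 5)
Your proof is correct, but it takes a genuinely different route from the paper's. Both arguments reduce to the same semiconjugacy $g\circ f = \lambda g + c$, but you and the paper obtain it and exploit it in quite different ways. You derive the functional equation directly and algebraically: expressing $\mu$ as $\overline{g'}/g'\,d\bar z/dz$, reading off from $f^*\mu=\mu$ that $h:=(g\circ f)'/g'$ is entire and real a.e., hence a real constant $\lambda$, and integrating. This is cleaner than the paper's route, which first picks a repelling periodic point $z_0$ (invoking density of such points for non-affine entire maps), normalizes, forms $A=g\circ f\circ\phi$ with $\phi$ a local branch of $g^{-1}$, checks $A$ preserves $d\bar z/dz$ hence is affine, and then spreads the local identity $g\circ f=A\circ g$ to all of $\C$ by the identity theorem. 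To finish, the two approaches diverge: you rule out transcendental $f$ by a growth comparison (P\'olya's lower bound $M(r,g\circ f)\ge M(cM(r/2,f),g)$, which is valid without any order restriction on $g$, combined with $M(2r,g)/M(r,g)\to\infty$ for transcendental $g$), whereas the paper exploits the dynamics: since $|\lambda|=|f'(z_0)|>1$, every periodic point $z_1$ of $f$ satisfies $\lambda^n g(z_1)=g(z_1)$, forcing $g(z_1)=0$; but $g^{-1}(0)$ is discrete while periodic points of a non-affine entire $f$ are dense in the perfect set $J(f)$. The trade-off is that your route imports a classical value-distribution theorem but avoids dynamics entirely (and avoids the normalization fuss), while the paper's route stays self-contained within the dynamical toolkit already in play but needs the existence of a repelling cycle with $|f'|>1$, which is what makes the final discreteness argument bite. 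Your concern about whether P\'olya's bound applies in infinite order is unfounded --- it holds for all nonconstant entire $f,g$ --- so the growth step is solid as written.
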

\begin{proof}
 By the previous lemma, $\mu=g^*(d\bar{z}/dz)$ for some nonconstant
  entire function
  $g$ with no critical points. We may suppose that $f$ has a repelling
  periodic point
  $z_0$ with $g'(z_0)\neq 0$. (Otherwise, it follows directly that
  $f$ is affine.) By changing coordinates and
  passing to an iterate, we may  
  suppose that $z_0=0$, $f(0)=0$ and $g(0)=0$.  
  Let $\phi$ be the branch of $g^{-1}$ with
  $\phi(0)=0$. 
  Then the locally defined function
     \[ A := g\circ f \circ \phi \]
  preserves the standard line field, and hence is a global affine map:
     \[ A(z) = \lambda z, \quad |\lambda| = |f'(0)|>1, \quad\lambda\in\R. \]
  By the identity theorem, $g\circ f = A\circ g$ on all of $\C$, so 
  $f$ and $A$ are semiconjugate.

  If $z_1$ is another periodic point of $f$, say of period $n$, then
   $A^n(g(z_1))=g(z_1)$, so  $g(z_1)=0$. Hence the set of periodic points
   of $f$ is contained in the discrete set $g^{-1}(0)$, and therefore
   $f$ is affine.
\end{proof}

  Finally, we require
   some facts about push-forwards and pull-backs of univalent line fields
   by maps with a critical point. 
   We begin with the following simple observation.
 \begin{observation}[Critical pullbacks] \label{obs:pullbacks}
   Let $z,w\in\Ch$, and let $f$ be holomorphic near $z$, with
    $f(z)=w$.
   Let $\nu$ be a line field near $w$, and let
    $\mu$ be the pullback of $\nu$ under $f$.

   Suppose that
    $\mu$ is univalent near $z$. 
    Then either $f'(z)\neq 0$, and
    $\nu$ is univalent near $w$, or 
    $z$ is a simple critical point of $f$, and $\nu$ is univalent on a 
    punctured neighborhood of $w$, but not in $w$ itself.
 \end{observation}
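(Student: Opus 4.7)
I would argue by case analysis on whether $f'(z) = 0$. If $f'(z) \neq 0$, then $f$ is locally invertible at $z$, and a local inverse $f^{-1}$ defined near $w$ is a biholomorphism. Pulling back $\mu$ under this biholomorphism yields $\nu$, which inherits univalence directly: if $\phi$ is a linearizing coordinate for $\mu$ near $z$, then $\phi \circ f^{-1}$ linearizes $\nu$ near $w$. This disposes of the first alternative in the statement.

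Suppose then that $z$ is a critical point of $f$ of local degree $d \geq 2$. After pre- and post-composing with biholomorphisms, which preserve univalence of line fields, I would pass to local coordinates with $z = w = 0$ and $f(\zeta) = \zeta^d$. Away from $0$ the map $f$ is locally invertible, so by the reasoning above $\nu$ is univalent on the punctured neighborhood of $w = 0$ via local inverse branches of $f$. It then remains to establish two things: that $\nu$ is \emph{not} univalent at $w$, and that $d$ must equal $2$.

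For the non-univalence at $w$, I would argue by contradiction. If $\nu$ had a linearizing coordinate $\psi$ defined at $0$, then $\psi \circ f$ would be a holomorphic function near $z$ satisfying $(\psi \circ f)^*(d\bar{z}/dz) = f^* \nu = \mu$, yet $(\psi \circ f)'(0) = \psi'(0) f'(0) = 0$. Comparing with a univalent linearizing coordinate $\phi$ for $\mu$ through equality of Beltrami coefficients, the ratio $\phi'/(\psi \circ f)'$ is real-valued and holomorphic off the isolated zero at $0$, hence a real constant $\lambda$ by the identity theorem. Evaluating at $0$ gives $\phi'(0) = \lambda \cdot (\psi \circ f)'(0) = 0$, contradicting the univalence of $\phi$.

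To prove $d = 2$, I would exploit the multivaluedness of $f^{-1}$. The $d$ branches of $f^{-1}$ near a regular point $\eta_0 \neq 0$ are $f^{-1}_k(\eta) = \omega^k \eta^{1/d}$ with $\omega = e^{2\pi i/d}$; each composition $\phi \circ f^{-1}_k$ is a linearizing coordinate for $\nu$ in its sector, and since all of them represent the same line field $\nu$, they differ pairwise by real-affine maps. Substituting $\eta = \zeta^d$ translates this into a functional equation $\phi(\omega \zeta) = \lambda_1 \phi(\zeta) + c_1$ with $\lambda_1 \in \R$, valid on a punctured neighborhood of $0$ and extending to all of it by analyticity. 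Matching the linear Taylor coefficient at $0$ yields $\omega \phi'(0) = \lambda_1 \phi'(0)$; since $\phi'(0) \neq 0$, this forces $\omega = \lambda_1 \in \R$, and because $\omega$ has order $d \geq 2$, necessarily $\omega = -1$ and $d = 2$. The main technical step is this Taylor-coefficient matching, which couples the multivaluedness of $f^{-1}$ with the rigidity, namely the reality of derivative ratios, of the linearizing-coordinate structure.
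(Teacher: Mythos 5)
Your proof is correct and at its core uses the same idea as the paper's terse argument: after normalizing $f$ to $\zeta\mapsto\zeta^d$, the rotation $\zeta\mapsto e^{2\pi i/d}\zeta$ is a deck transformation of $f$ and so must preserve $\mu=f^*\nu$, which (as your functional equation $\phi(\omega\zeta)=\lambda_1\phi(\zeta)+c_1$ makes explicit, since it says precisely that the rotation is real-affine in linearizing coordinates) forces $e^{2\pi i/d}\in\R$ and hence $d\in\{1,2\}$. You spell out two points the paper's one-line proof leaves tacit — the justification that a rotation preserving a univalent line field must be by $\pm1$, and the verification that $\nu$ fails to be univalent \emph{at} $w$ when $d=2$ — so this is the same approach carried out in fuller detail.
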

\begin{remark}
 In the case where $f$ is not univalent, so that
  $\nu$ is the 
  push-forward of a univalent line field under a double cover, 
  we say that $\nu$ has 
  a \emph{one-pronged singularity} near $w$. 
\end{remark}
\begin{proof} By changing coordinates, we may assume that
  $z_0=w_0=0$, 
   $\mu = d\bar{z}/dz$, and $f(z)=z^d$ for some $d\geq 1$. 
  The line field $d\bar{z}/dz$ must then be invariant under
  $z\mapsto e^{2\pi i/d} z$, which is only possible if $d\in\{1,2\}$,
  as required.
\end{proof}

 We will also be using the following well-known fact:

 \begin{lem}  \label{lem:exceptional}
  There is no line field on the Riemann sphere that is univalent
   everywhere except (possibly) at a single one-pronged singularity.
 \end{lem}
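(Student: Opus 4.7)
The plan is to reduce to Lemma \ref{lem:globalunivalent} by a careful local analysis at the singularity. After a M\"obius change of coordinates I may assume the (possible) singularity lies at $\infty$, so that $\mu|_{\C}$ is univalent everywhere. If there is no singularity at all, applying Lemma \ref{lem:globalunivalent} with $X = \Ch$ gives an immediate contradiction. Otherwise, that lemma applied with $X = \C$ produces a nonconstant entire function $g\colon\C\to\C$ with no critical points such that $\mu = g^*(d\bar z/dz)$ on $\C$.

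To exploit the one-pronged singularity at $\infty$, I would pass to the branched double cover $\Psi\colon\zeta \mapsto 1/\zeta^2$, which in the chart $w = 1/z$ around $\infty$ becomes the standard double cover $\zeta \mapsto \zeta^2$. By the definition of one-pronged singularity (the remark following Observation \ref{obs:pullbacks}), the pulled-back line field $\Psi^*\mu = G^*(d\bar z/dz)$, where $G(\zeta) := g(1/\zeta^2)$, extends to a univalent line field on a full disk $U$ around $\zeta = 0$. The crux of the argument, and the step where the real work lies, is to upgrade this univalence of the line field into holomorphic extension of $G$ itself across $\zeta = 0$. For that I would compare $G$ with a local linearizing coordinate $\phi$ on $U$ for the extended line field. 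On the punctured disk $U\setminus\{0\}$, both $\phi$ and $G$ are linearizing coordinates with nonvanishing derivative (since $g$ has no critical points and $\Psi$ is unramified away from $0$). Running the compatibility argument from the proof of Lemma \ref{lem:globalunivalent} on a small simply connected piece yields $\phi = \lambda G + c$ for some $\lambda \in \R\setminus\{0\}$ and $c \in \C$, and by the identity theorem this identity propagates throughout the connected set $U\setminus\{0\}$. Consequently, $G = (\phi - c)/\lambda$ extends holomorphically to all of $U$ with $G'(0) \neq 0$.

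The contradiction is then immediate from the Laurent expansion. Writing $g(w) = \sum_{n\geq 0} a_n w^n$, one obtains $G(\zeta) = \sum_{n\geq 0} a_n \zeta^{-2n}$, which can extend holomorphically at $\zeta = 0$ only if $a_n = 0$ for every $n\geq 1$, contradicting that $g$ is nonconstant.
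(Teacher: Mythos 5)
Your proof is correct and takes essentially the same route as the paper: after placing the singularity at $\infty$, Lemma \ref{lem:globalunivalent} produces a nonconstant entire linearizing coordinate $g$ on $\C$, and the one-pronged singularity at $\infty$ then forces a contradiction. The paper dispatches that final step in one line (``it is impossible to define the square root continuously on a punctured neighborhood of zero''); your version, which pulls back through the local double cover $\Psi\colon\zeta\mapsto 1/\zeta^2$, uses the affine rigidity of linearizing coordinates to force $G = g\circ\Psi$ to extend across $\zeta=0$, and then reads off the contradiction from the Laurent expansion, is a more explicit way of making that same monodromy obstruction precise.
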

 \begin{proof}
  By Lemma \ref{lem:globalunivalent}, 
   there is no globally univalent line field on 
   the sphere. 
   So assume $\mu$ is a line field with 
   a single one-pronged singularity.
   Without loss of generality, we may assume that this singularity is at 
   $\infty$. Then the line field is univalent on
   $\C$, and has as its linearizing coordinate a global entire function
   $\phi:\C\to\C$. However, this is impossible, since there cannot be
   a linearizing coordinate defined on a complete
   punctured neighborhood of a one-pronged singularity (i.e.,
   it is impossible to define the square root continuously on 
   a punctured neighborhood of zero). 
 \end{proof}

\section{Exceptional values}  \label{sec:exceptional}

 The big Picard theorem states that a holomorphic function cannot
  omit more than two values in the neighborhood of an essential singularity,
  and Montel's theorem states that any family of functions that
  omit the same three values is normal. We will require the
  following well-known fact from Nevanlinna theory
  (compare e.g.\ \cite{walterbloch}), which generalizes both theorems.

 \begin{thm}[Branched Values] \label{thm:nevanlinna}
   Let $a_1,\dots,a_r\in\Ch$ be distinct points, 
   and let $\nu_1,\dots,\nu_r\geq 1$ be
    integers such that
     \[ \sum_{i=1}^r\left( 1-\frac{1}{\nu_i}\right) > 2. \]
   \begin{enumerate}
    \item Let $f:\C\to \Ch$ be a
      meromorphic function such that
      each $a_i$ has only finitely many preimages of multiplicity
      less than $\nu_i$. Then $f$ is rational. \label{item:essential}
    \item Let $U\subset\C$ be a domain.
      The family of all meromorphic functions $f:U\to \Ch$
      for which 
      each preimage of
     each $a_i$ has multiplicity at least $\nu_i$ is a normal family
      in the sense of Montel.  \label{item:normal} \qedd
   \end{enumerate}
  \end{thm}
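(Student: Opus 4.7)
The plan is to establish part \ref{item:essential} via Nevanlinna's second main theorem, and then to bootstrap part \ref{item:normal} from part \ref{item:essential} using Zalcman's rescaling lemma and the Riemann--Hurwitz formula. Throughout I write $T(R,f)$ for the Nevanlinna characteristic and $\overline{N}(R,a_i)$ for the number of distinct preimages of $a_i$ in $\{|z|\leq R\}$, counted without multiplicity.

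For part \ref{item:essential}, I would apply the truncated second main theorem
\[
(r-2)\,T(R,f) \;\leq\; \sum_{i=1}^{r} \overline{N}(R,a_i) + S(R,f),
\]
with $S(R,f)=o(T(R,f))$ outside an exceptional set of radii of finite measure. For each $i$ I split $\overline{N}(R,a_i)$ according to whether a preimage has multiplicity less than $\nu_i$ or not. By hypothesis only finitely many preimages contribute to the first part, which is therefore $O(\log R)$, while the second part satisfies
\[
\overline{N}_{\geq\nu_i}(R,a_i) \;\leq\; \tfrac{1}{\nu_i}\,N(R,a_i) \;\leq\; \tfrac{1}{\nu_i}\,T(R,f) + O(1)
\]
by the first main theorem. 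Substituting and rearranging gives
\[
\Bigl(\sum_{i=1}^r\bigl(1-\tfrac{1}{\nu_i}\bigr) - 2\Bigr) T(R,f) \;\leq\; O(\log R) + S(R,f),
\]
whose left-hand coefficient is strictly positive by hypothesis. Hence $T(R,f)=O(\log R)$, which is the classical characterization of rational functions.

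For part \ref{item:normal} I would argue by contradiction via Zalcman's lemma. If the family $\mathcal{F}$ fails to be normal at some $z_0\in U$, there exist $f_n\in\mathcal{F}$, $z_n\to z_0$ and $\rho_n\to 0^+$ such that $g_n(\zeta):=f_n(z_n+\rho_n\zeta)$ converges, locally uniformly in the spherical metric on $\C$, to a non-constant meromorphic function $g\colon\C\to\Ch$. By Hurwitz's theorem every preimage of $a_i$ under $g$ arises as the limit of one or more preimages under the $g_n$; its multiplicity equals the sum of those approximating $g_n$-multiplicities, each of which is at least $\nu_i$, so the limit multiplicity is at least $\nu_i$ as well. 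Thus $g$ satisfies the hypothesis of part \ref{item:essential} and is rational, of some degree $d\geq 1$. For each $i$ the set $g^{-1}(a_i)$ has at most $d/\nu_i$ elements, so the ramification of $g$ over $a_i$ is at least $d-d/\nu_i = d(1-1/\nu_i)$; summing and invoking Riemann--Hurwitz gives
\[
2d-2 \;\geq\; \sum_{i=1}^r d\bigl(1-\tfrac{1}{\nu_i}\bigr) \;>\; 2d,
\]
which is absurd.

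The main technical point is the bookkeeping around the exceptional set of radii in the bound $S(R,f)=o(T(R,f))$: one must verify that its presence does not obstruct the deduction $T(R,f)=O(\log R)$ nor the classical conclusion that such growth forces $f$ to be rational. Both facts are standard in Nevanlinna theory but deserve careful citation. The remaining ingredients, namely Hurwitz's theorem, the Riemann--Hurwitz formula, and the correct formulation of Zalcman's rescaling lemma for \emph{meromorphic} (rather than merely holomorphic) families, may then be invoked off the shelf.
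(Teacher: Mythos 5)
The paper does not prove this theorem: it is stated as a ``well-known fact from Nevanlinna theory'' and cited to the literature (with the no-proof symbol $\qedd$), so there is no internal argument to compare against. Your blind proof is the standard textbook argument and is essentially correct.

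A few small points worth noting. In part (1), what you call $\overline{N}(R,a_i)$ should be the \emph{integrated} truncated counting function, not the raw count of distinct preimages in $\{|z|\leq R\}$; the second main theorem is stated in terms of the former. The subsequent manipulations are correct once this is understood, and the conclusion $T(R,f)=O(\log R)$ does force rationality. One does need to verify, as you flag, that the exceptional set of radii in the error term does not interfere, but since $T(R,f)$ is nondecreasing and the exceptional set has finite measure, the bound $T(R,f)=O(\log R)$ propagates from a cofinal set of radii to all large $R$. In part (2), the Hurwitz argument should be phrased for a single large $n$ rather than across the sequence: for fixed $\zeta_0$ with $g(\zeta_0)=a_i$ of multiplicity $m\geq 1$ and a suitable small circle, each $g_n$ with $n$ large has exactly $m$ $a_i$-points (counted with multiplicity) inside, at least one of them, and each of multiplicity at least $\nu_i$ since $\zeta\mapsto z_n+\rho_n\zeta$ is a local biholomorphism; hence $m\geq\nu_i$. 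The Riemann--Hurwitz contradiction $2d-2\geq d\sum_i\bigl(1-\tfrac{1}{\nu_i}\bigr)>2d$ then rules out a rational limit, while part (1) rules out a transcendental one, so both are genuinely used. The overall plan is sound and is the argument one finds in standard references on Bloch's principle.
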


Recall that the \emph{backward orbit} of $z\in\Ch$ under $f$ is the
 set 
  \[ O^-(z) = \{w\in\C: f^n(w)=z\text{ for some } n\geq 0 \} \]
 of iterated preimages of $z$ under $f$. If $w_0\in O^-(z)$, then 
 the \emph{multiplicity} of $w_0$ as an iterated preimage of $z$ is
 its multiplicity as a zero of the function $w\mapsto f^n(w)-z$
 (where $n$ is chosen as small as possible). If the 
 multiplicity of $w_0$ is $1$, we say that the preimage $w_0$ is
 \emph{unbranched}; otherwise, $w_0$ is \emph{branched}. 

A value $z\in\Ch$ is \emph{Fatou exceptional}
 if its backward orbit  is a finite set.
 We denote
 the set of all Fatou exceptional values by $E_F(f)$.
 Let us also introduce a related concept: we say that $z$ is
 \emph{branch exceptional} if 
 $z$ has only finitely many
 unbranched iterated preimages. 
 We denote the
 set of all such values by $E_B(f)$. (This concept seems to appear implicitly
 in Schwick's simplified proof of the density of repelling periodic
 cycles in the Julia set \cite{schwick}. Also, it is 
 somewhat related to that of \emph{univalently omitted values} 
 introduced in
 \cite{graczykkotusswiatek}.)

 If $z\in E_B(f)$, 
 let us define 
  the \emph{exceptional multiplicity} 
of $z$ as the largest
  number $2\leq \nu\leq\infty$ 
  such that $O^-(z)$ contains at most finitely many points of
  multiplicity 
  less than $\nu$. 
  (In particular, the Fatou exceptional values
  are exactly the branched exceptional values of
  exceptional multiplicity $\infty$.)

The following lemma is well-known for Fatou exceptional points
 \cite[Lemma 4.9 and Theorem 4.10]{jackdynamicsthird}, but
 the corresponding statement about branch exceptional points does not
 seem to appear explicitly in the literature. 
\begin{lem}[Unbranched preimages of non-exceptional points are dense]
   \label{lem:unbranched}
 Let $f:\C\to\Ch$ be a nonconstant, nonlinear meromorphic function. For
  every branch exceptional value $v\in E_B(f)$, let $2\leq \nu(v)\leq\infty$ 
  be its exceptional multiplicity. Then
    \[ \sum_{v\in E_B(f)} \left(1-\frac{1}{\nu(v)}\right) \leq 2. \]
  In particular, $\# E_B(f)\leq 4$. 

  Furthermore, let $v\in\Ch$, and let $z\in J(f)$. If $v\notin E_F(f)$,
   then every neighborhood $U$ of $z$ contains a point $w\in O^-(v)$.
   If additionally $v\notin E_B(f)$, then
   $w$ can be chosen to be unbranched. 
\end{lem}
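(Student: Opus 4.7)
The plan is to prove the three claims in order. Part~1 follows from Theorem~\ref{thm:nevanlinna}(\ref{item:essential}) combined with a Riemann--Hurwitz counting argument, Part~2 is a direct application of Montel's theorem, and Part~3 invokes Theorem~\ref{thm:nevanlinna}(\ref{item:normal}).

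For Part~1, suppose some finite subcollection $v_1,\dots,v_r\in E_B(f)$ admits integers $\nu_i\leq\nu(v_i)$ with $\sum_i(1-1/\nu_i)>2$. Every level-one preimage of $v_i$ whose $f$-multiplicity is less than $\nu_i$ is, in particular, an iterated preimage of iterated multiplicity below $\nu(v_i)$, so there are only finitely many such points. Theorem~\ref{thm:nevanlinna}(\ref{item:essential}) then forces $f$ to be rational, handling the transcendental case. For rational $f$ of degree $d\geq 2$ I would apply Riemann--Hurwitz to $f^n$: since the $f^n$-multiplicity of a point in the backward orbit is at least its iterated multiplicity, the same observation gives a uniform bound $K_i$ on the number of points in $f^{-n}(v_i)$ with $f^n$-multiplicity below $\nu(v_i)$. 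Since the multiplicities of all level-$n$ preimages of $v_i$ sum to $d^n$, we get $|f^{-n}(v_i)|\leq K_i+d^n/\nu(v_i)$, so the ramification of $f^n$ at preimages of $v_i$ is at least $d^n(1-1/\nu(v_i))-K_i$. Summing over $i$, comparing with the total ramification $2d^n-2$, and letting $n\to\infty$ yields $\sum_i(1-1/\nu(v_i))\leq 2$. The bound $\#E_B(f)\leq 4$ is then immediate since each summand is at least $1/2$.

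For Part~2, I argue by contradiction: suppose $U\cap O^-(v)=\emptyset$. If some $u\in U$ had $f^n(u)\in O^-(v)$, then $u\in O^-(v)$ as well; hence $f^n(U)$ is disjoint from $O^-(v)$ for every $n\geq 0$. Since $v\notin E_F(f)$ the set $O^-(v)$ is infinite, so we may choose three distinct points $w_1,w_2,w_3\in O^-(v)$. The family $\{f^n|_U\}$ omits all three for every $n$, hence is normal by Montel's theorem, contradicting $z\in J(f)\cap U$.

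For Part~3, assume $v\notin E_B(f)$ and, for contradiction, that $U$ contains no unbranched iterated preimage of $v$. Since $v$ has infinitely many unbranched iterated preimages, I pick four of them, $a_2,\dots,a_5$, distinct from each other and from $v=:a_1$; say $f^{n_i}(a_i)=v$ with $f^{n_i}$-multiplicity $1$ at $a_i$. For any $u\in U$ with $f^n(u)=a_i$ we have $f^{n+n_i}(u)=v$, and the $f^{n+n_i}$-multiplicity at $u$ factors as the $f^n$-multiplicity at $u$ times the $f^{n_i}$-multiplicity at $a_i=f^n(u)$, the latter being $1$. Since $u\in O^-(v)\cap U$ is branched by assumption, the $f^n$-multiplicity at $u$ must be at least $2$. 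Hence every preimage of each $a_i$ in $U$ under every $f^n$ has multiplicity at least $2$, and with $\nu_i=2$ we have $\sum_{i=1}^5(1-1/\nu_i)=5/2>2$; Theorem~\ref{thm:nevanlinna}(\ref{item:normal}) then makes $\{f^n|_U\}$ normal, contradicting $z\in J(f)$. The main subtlety is that the strict inequality in Theorem~\ref{thm:nevanlinna}(\ref{item:normal}) forces at least \emph{five} distinct values when $\nu_i=2$ throughout, and pairing $v$ with four unbranched iterated preimages---whose existence is guaranteed precisely by $v\notin E_B(f)$---is exactly what closes the argument.
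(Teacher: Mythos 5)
Your Part 1 is correct and takes a genuinely different route from the paper: you handle the rational case by a direct Riemann--Hurwitz count on the iterates $f^n$ (letting $n\to\infty$), whereas the paper proves a single unified Claim --- picking a point $z\in J(f)$ and a neighborhood $U$ avoiding the finitely many low-multiplicity preimages of the $v_i$, then applying Theorem~\ref{thm:nevanlinna} --- from which all three assertions follow at once. Both work; yours is perhaps more self-contained for the rational case, the paper's avoids ever splitting on rational vs.\ transcendental.

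However, Parts 2 and 3 have a real gap. When $f$ is a transcendental meromorphic function with at least one pole, the iterates $f^n$ are not all defined as meromorphic functions on $U$: the set of pre-poles of $f$ is dense in $J(f)$, so any neighborhood $U$ of a point $z\in J(f)$ contains a pre-pole $z_1$ with $f^m(z_1)=\infty$, and then $f^{m+1}$ has an essential singularity at $z_1$ and is not meromorphic on $U$. Your Part 2 invokes Montel's theorem on the family $\{f^n|_U\}$, and your Part 3 invokes Theorem~\ref{thm:nevanlinna}(\ref{item:normal}) on the same family; neither is applicable in this situation, since both require a family of functions meromorphic on all of $U$. The paper handles exactly this case separately: it shrinks $U$ to a small neighborhood $V$ of the pre-pole $z_1$ containing no other critical points or poles of $f^m$, so that $W:=f^m(V)$ is a punctured neighborhood of $\infty$, and then applies the \emph{essential singularity} form, Theorem~\ref{thm:nevanlinna}(\ref{item:essential}), to $f$ restricted near $\infty$ (checking that every point of $f^{-1}(a_i)\cap W$ has $f$-multiplicity at least $\nu_i$ by pulling back along $f^m$). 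To close your argument you would need to add this pre-pole case to both Parts 2 and 3.
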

\begin{remark}
The first part of the theorem can also be inferred directly from
 Theorem \ref{thm:nevanlinna} for transcendental functions, while for
 rational functions it follows from elementary combinatorial considerations
 (see e.g.\ \cite[Lemma 2.3]{localcompactness}).
 Instead, we provide a simple unified proof of both statements, using
 Theorem \ref{thm:nevanlinna} in all cases. 
\end{remark}
\begin{proof}
 The theorem will be deduced from the following claim.
\begin{claim}
 Let $z\in J(f)$, and let $U$ be a neighborhood of $z$.
  Let $a_1,\dots,a_r\in\Ch$ be distinct points
  and suppose that there are
  $\nu_1,\dots,\nu_r\in \{2,3,\dots,\infty\}$ such that
  $O^-(a_i)\cap U$ contains no point of multiplicity less than
  $\nu_i$. 
  Then 
  $\sum_{i=1}^r (1 - 1/\nu_i) \leq 2$. In particular, $r\leq 4$. 
\end{claim}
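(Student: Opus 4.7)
The plan is to apply Theorem~\ref{thm:nevanlinna}(\ref{item:normal}), i.e., the Nevanlinna-type extension of Montel's normality criterion, to the family $\mathcal{F} := \{f^n|_U\}_{n \geq 0}$ of meromorphic maps $U \to \Ch$. I argue by contradiction: assume $\sum_{i=1}^r (1-1/\nu_i) > 2$. Once one checks that every $f^n|_U$-preimage of $a_i$ has multiplicity at least $\nu_i$ (as a zero of $f^n - a_i$), Theorem~\ref{thm:nevanlinna}(\ref{item:normal}) will force $\mathcal{F}$ to be a normal family on $U$. Since $z \in U \cap J(f)$ is by definition a non-normality point for the iterates of $f$, this is the desired contradiction and yields $\sum (1-1/\nu_i) \leq 2$. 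The final assertion $r \leq 4$ is then immediate: each $\nu_i \geq 2$ contributes $1 - 1/\nu_i \geq 1/2$, so $r/2 \leq 2$.

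The multiplicity check is a short bookkeeping exercise. Fix $n \geq 1$ and a point $w \in U$ with $f^n(w) = a_i$, and let $n_0 \leq n$ be the \emph{least} index with $f^{n_0}(w) = a_i$. The order of vanishing of $f^m - a_i$ at $w$ equals $\prod_{k=0}^{m-1}\deg_{f^k(w)}(f)$, the product of the local degrees of $f$ along the finite orbit $w, f(w), \dots, f^{m-1}(w)$. The Claim's hypothesis says precisely that this product is at least $\nu_i$ when $m = n_0$; and since the additional factors $\deg_{f^k(w)}(f)$ for $n_0 \leq k < n$ are positive integers, the product for $m = n$ is no smaller. (In fact $n_0 = n$ unless $a_i$ happens to be periodic.) Hence every preimage of $a_i$ under $f^n|_U$ has multiplicity at least $\nu_i$, which is exactly the hypothesis of Theorem~\ref{thm:nevanlinna}(\ref{item:normal}).

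The step that requires the most care is the bookkeeping just described: one must ensure that the Claim's notion of ``multiplicity of an iterated preimage'' (defined via the minimal $n_0$) transfers faithfully to the vanishing order of $f^n - a_i$ at $w$ for every $n$ with $f^n(w) = a_i$. Beyond this routine verification, the argument is a direct application of Theorem~\ref{thm:nevanlinna}(\ref{item:normal}) together with the defining non-normality of the iterates at points of the Julia set, so I do not expect any genuine obstacle.
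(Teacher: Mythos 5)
Your argument correctly handles the case where all iterates $f^n$ can be defined as meromorphic functions on $U$ (e.g.\ when $f$ is rational or entire), and the multiplicity bookkeeping is sound. However, there is a genuine gap: for a transcendental meromorphic $f$ with at least one pole, the pre-poles of $f$ are dense in $J(f)$, so the given neighborhood $U$ of $z\in J(f)$ necessarily contains a point $z_1$ with $f^{n_0}(z_1)=\infty$ for some $n_0\geq 1$. Since $\infty$ is an essential singularity of $f$, the iterate $f^{n_0+1}$ is not even defined at $z_1$, and $\{f^n|_U\}_{n\geq 0}$ is \emph{not} a family of meromorphic maps $U\to\Ch$. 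Thus Theorem~\ref{thm:nevanlinna}(\ref{item:normal}) cannot be applied directly, and the appeal to ``$z$ is a non-normality point for the iterates'' quietly presupposes that the iterates are defined near $z$, which in this case they are not.

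The paper treats exactly this case separately. If $U$ contains such a pre-pole $z_1$, one picks a small neighborhood $V\subset U$ of $z_1$ containing no critical points or poles of $f^{n_0}$ other than $z_1$ itself; then $W:=f^{n_0}(V)$ is a neighborhood of $\infty$, and the Claim's hypothesis transfers to show that every $w\in f^{-1}(a_i)\cap W$ has multiplicity at least $\nu_i$. Now Theorem~\ref{thm:nevanlinna}(\ref{item:essential}) --- the Picard-type statement applied to the \emph{single} map $f$ near the essential singularity at $\infty$, not to a family of iterates --- forces $\sum_{i=1}^r(1-1/\nu_i)\leq 2$. Your proof needs to be supplemented with this second branch to cover transcendental meromorphic $f$.
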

\begin{subproof}
 Let us first suppose that all iterates $f^n$ can be defined 
  as meromorphic functions on $U$
  (this is always the case if $f$ is rational
  or entire). Since $z\in J(f)$, these iterates do not
  form a normal family, and the claim follows directly from
  Theorem \ref{thm:nevanlinna} (\ref{item:normal}).

 Otherwise, $f$ is a transcendental meromorphic function, and $U$ contains
  a point $z_1$ with $f^n(z_1)=\infty$ for some $n\geq 0$. We can pick a
  small neighborhood $V\subset U$ of $z_1$ that contains no
  critical points or poles of $f^n$
  apart from $z_1$. Let $W:= f^n(V)$; then $W$ is a neighborhood of $\infty$.
  By assumption, every point in $f^{-1}(a_i)\cap W$
  has   multiplicity at least $\nu_i$. 
% Furthermore,
%   by compactness and analyticity, $f^{-1}(a_i)\setminus W$ is finite.
%%%LR   \lrcomment{The comment about finiteness does not make sense to me}
  Hence, the claim now follows 
  from Theorem \ref{thm:nevanlinna} (\ref{item:essential}). 
\end{subproof}
To deduce the first part of the theorem, let 
 $a_1,\dots,a_r\in E_B(f)$. Since the Julia set is uncountable,
 we can choose some $z\in J(f)$ that is not one of the finitely many 
 iterated preimages of $a_i$ of multiplicity less than $\nu(a_i)$. 
 We can now pick a small neighborhood $U$ of $z$ and apply the Claim.

To prove the second part, suppose $z\in J(f)$ and $v\notin E_F(f)$ (resp.\
   $v\notin E_B(f)$). We can apply the Claim to
  five distinct iterated preimages (resp.\ unbranched iterated preimages)
  $v_1,\dots,v_5$ of $v$. It follows that at least one of these
  must have an unbranched iterated preimage in $U$, as desired. 
\end{proof}

\section{Univalent line fields} \label{sec:univalent}
 With these preliminaries, we are ready to prove the 
  main fact required for the proof of Theorem \ref{thm:mainlinefields}; compare
  \cite[Lemma 3.16]{mcmullenrenormalization}.

 \begin{thm}[Maps supporting a univalent line field]
    \label{thm:nounivalentlinefields}
  Let $f:\C\to\Ch$ be a nonconstant, nonlinear meromorphic function,
   and suppose that there is an invariant line field $\nu$ on $\Ch$
   that is univalent on an open set $U$ intersecting $J(f)$.

  Then $f$ is conjugate to one of the following:
   \begin{enumerate}
    \item $z\mapsto z^k$, $k\in\Z$, $|k|\geq 2$,
    \item $z\mapsto T_k(z)$ or $z\mapsto -T_k(z)$, where $k\geq 2$ and
      $T_k$ is the $k$-th Chebyshev polynomial, or
    \item a Latt\`es map.
   \end{enumerate}
  (In particular, $f$ is a rational function.)
 \end{thm}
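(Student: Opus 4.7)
The plan is to show that $\nu$ must in fact be univalent on all of $\Ch$ outside a finite set contained in $E_B(f)$, to translate this into a meromorphic quadratic differential $q$ on $\Ch$ satisfying $f^{*}q = c\,q$, and to use this functional equation both to rule out transcendental $f$ and to invoke (a version of) McMullen's classification in the rational case.

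First I would let $X \subset \Ch$ denote the maximal open set on which $\nu$ is univalent, so $U \subset X$ and $X \cap J(f) \neq \emptyset$, and set $F := \Ch \setminus X$. For any $v \in F$, if $v \notin E_B(f)$ then Lemma \ref{lem:unbranched} supplies an unbranched iterated preimage $w$ of $v$ arbitrarily close to some chosen $z_0 \in U \cap J(f)$; in particular one can arrange $w \in U$. Since $w$ is unbranched, $f^n$ is a local biholomorphism at $w$, and the identity $(f^n)^{*}\nu = \nu$ transports a linearizing coordinate for $\nu$ at $w$ into one at $v$, contradicting $v \notin X$. Hence $F \subset E_B(f)$, and Lemma \ref{lem:unbranched} gives $|F| \leq 4$; note that this argument treats $v \in \C$ and $v = \infty$ uniformly by working on the sphere.

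Next I would view $\nu$ as the direction field of a meromorphic quadratic differential $q$ on $\Ch$ (with the allowable singularity types governed by Observation \ref{obs:pullbacks} and similar local computations), where $q$ is holomorphic and nonvanishing on $X$ with zeros and poles only at $F$. Since $\sum_p \operatorname{ord}_p(q) = -4$ on $\Ch$ while $|F| \leq 4$, the divisor of $q$ is very restricted. Invariance $f^{*}\nu = \nu$ forces the meromorphic function $(f^{*}q)/q$ to be positive real-valued, hence equal to a constant $c > 0$; writing $q = \psi(w)\,dw^2$ with $\psi$ rational, this is the functional equation
\[
\psi(f(z))\,(f'(z))^2 \;=\; c\,\psi(z).
\]

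I would then rule out the transcendental case. If $q$ had any zero or pole at a point outside $E_F(f)$, then $f^{-1}$ of that point would be infinite, producing infinitely many singularities of $f^{*}q$ but only finitely many of $c\,q$; hence the whole divisor of $q$ lies in $E_F(f)$. For transcendental meromorphic $f$ one has $|E_F(f)| \leq 2$, so the divisor of $q$ is supported on at most two points. The subcases $|F| \leq 1$ are excluded using Corollary \ref{cor:entireunivalent} together with its routine extension to meromorphic $f$ via the semiconjugacy $g \circ f = A \circ g$ (which would compel $f$ to have no poles and then to be affine), while the subcase $|F| = 2$ is excluded by analyzing the displayed functional equation near the essential singularity of $f$ at $\infty$. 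Hence $f$ is rational, and the pair $(f,q)$ is the classical setup of McMullen's Lemma~3.16: the divisor of $q$ is an $f$-invariant subset of $E_B(f)$ of total degree $-4$, and a finite case analysis on the admissible divisors yields precisely the three listed models --- $z\mapsto z^k$ (with $q=dz^2/z^2$), $\pm T_k$ (with simple poles at the two critical values together with a higher-order pole at $\infty$), and Latt\`es maps (with four simple poles descending from the standard differential on a torus quotient). The main obstacle, as in McMullen's original argument, will be this final case analysis; the secondary obstacle is the transcendental exclusion, where the constraint $|E_F(f)| \leq 2$ must be coordinated carefully with the essential singularity of $f$ at $\infty$ to preclude a consistent invariant quadratic differential.
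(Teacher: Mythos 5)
Your overall strategy (pass to a quadratic differential $q$ and use the functional equation $f^{*}q = cq$) is genuinely different from the paper's, which instead constructs the Thurston orbifold $S=\Ch\setminus K_F$ with branch set $K_B$, lifts $\nu$ and $f$ to the universal cover $\pi\colon\C\to S$, and shows that the lift of $f$ is affine. Both routes are in the spirit of McMullen, but the paper deliberately avoids the differential in order to handle the transcendental case by analysing \emph{leafwise asymptotic values}; this is the heart of the paper's argument and there is no counterpart in your sketch. That said, I see two concrete problems in the proposal as written.

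\textbf{(1) Globalizing to a quadratic differential is a genuine gap.} A univalent line field only determines the local quadratic differentials $(\phi')^{2}\,dz^{2}$ up to multiplication by a positive real constant, so analytic continuation around a loop in $X = \Ch\setminus F$ produces a monodromy homomorphism $\pi_1(X)\to\R_{>0}$. Nothing in the definition forces this monodromy to vanish: on $\C^{*}$, for instance, the continuations of $z^{i\beta}\,dz^{2}$ (with $\beta\in\R$, $\beta\neq 0$) give a perfectly good univalent line field which is not the direction field of any global meromorphic differential. You quietly assume the monodromy is trivial when you write ``view $\nu$ as the direction field of a meromorphic quadratic differential $q$ on $\Ch$.'' This has to be argued, and the dynamics must be used: invariance of $\nu$ under $f$ together with the finiteness of $F$ does eventually kill the monodromy, but that is a real step, not a formality. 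The paper sidesteps this entirely because the universal cover $\pi$ simultaneously trivializes the monodromy.

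\textbf{(2) The divisor estimate is wrong.} You claim ``the whole divisor of $q$ lies in $E_F(f)$'' because a zero or pole outside $E_F(f)$ would propagate under $f^{*}$ to infinitely many singularities of $q$. But for a \emph{simple} pole $p$ of $q$, a preimage $w$ of local degree $d(w)$ contributes order $d(w)\cdot(-1)+2(d(w)-1)=d(w)-2$ to $f^{*}q$; for $d(w)=2$ this is $0$, no singularity at all. So a simple pole can have infinitely many iterated preimages provided almost all of them are critical of order~$2$ --- and this is exactly what happens for a Latt\`es map, where $q$ has four simple poles in $E_B(f)\setminus E_F(f)$. The correct conclusion is $\operatorname{div}(q)\subset E_B(f)$ (with higher-order poles and zeros landing in $E_F(f)$), which matches the bound $\#E_B(f)\leq 4$, not $\leq 2$. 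Your transcendental exclusion then collapses, since it is built on the $\leq 2$ bound; you would need to redo it by separating the simple-pole points (which force order-$2$ critical points, incompatible with essential singularities) from the $E_F$ points, and carry out the promised ``analysis near the essential singularity at $\infty$'' in earnest. The paper does exactly this extra work, via the leafwise-asymptotic-value argument, which is why that passage is the longest part of its proof.
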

 \begin{proof}
  Let $\wt{U}$ be the set of points at which the line field
   $\nu$ is univalent, and set $K := \Ch\setminus \wt{U}$. 
   Any point that has an unbranched iterated preimage in 
   $U$ must clearly belong to
   $\wt{U}$. So it follows from Lemma \ref{lem:unbranched} that
   $K\subset E_B(f)$. 

  Let us divide $K$ into two sets, by letting $K_B$ consist of all
   points of $K$ that have some iterated preimage under $f$ that belongs to 
   $\wt{U}$, and setting
   $K_F = K\setminus K_B$. Since $K$ is finite, we have $K_F\subset E_F(f)$.
   Now let $w\in K_B$ and let 
   $z\in \wt{U}$ be such that $f^n(z)=w$ for some $n\ge 1$.
   By 
   Observation    \ref{obs:pullbacks}, \, $f^n$ has a simple critical point 
    at $z$ and $\nu$ has a one-pronged singularity 
    near $w$.
    The same argument, applied to $f^{n+1}$,
    also shows that $f(K_B\setminus \{\infty\})
    \subset K_B$.     

  We set $S := \Ch\setminus K_F$ and summarize what we know so far:
   \begin{enumerate}
    \item The line field $\nu$ is univalent at every point of
      $S\setminus K_B$.
      \item $f(K_B\setminus \{\infty\})\subset K_B$.
    \item The line field $\nu$ has a one-pronged 
        singularity in every point of $K_B$.
% and is the pushforward
%       of  either \begin{itemize}
%       \item a univalent line field under a double branched cover
%       or \item of another one-pronged singularity under a univalent map.
%       \end{itemize}
    \item $f:S\setminus \{\infty\}\to S$ is holomorphic and preserves
      the line field $\nu$. In particular, $f$ has only simple critical
      points in $S$, and these are exactly the points in
       $f^{-1}(K_B)\setminus K_B$.   
   \end{enumerate}

 Points in $K_F$ have exceptional multiplicity $\infty$, while
  points in $K_B$ have exceptional multiplicity $2$. Hence, 
  according to Lemma~\ref{lem:unbranched}, we only have the following 
   possibilities:
    \begin{enumerate}
     \item  $S$ is the
      punctured plane, and $K_B$ is empty;
     \item $S$ is the plane, and $0\leq \# K_B \leq 2$;
     \item $S$ is the sphere, and $2\leq \# K_B \leq 4$. 
    \end{enumerate}
   (Recall that the cases $S=\Ch$ and $\# K_B = 0,1$ cannot occur by
    Lemma \ref{lem:exceptional}.)

    This amounts to saying that $S$ is an affine
    Thurston
   \emph{orbifold} with only simple branch points, and 
     $f:S\setminus\{\infty\}\to S$ is an analytic map between orbifolds.
     (Compare \cite[Appendix A]{mcmullenrenormalization} or 
     \cite[Appendix E]{jackdynamicsthird}.) 
     Because of the above classification, the
     \emph{orbifold Euler characteristic} 
     is nonnegative (where we use the terminology of 
     \cite[Appendix E]{jackdynamicsthird}), and therefore
     the universal covering $X$ is conformally $\C$ or
     $\Ch$.

  In other words, there is a ``universal cover''
    $X\in\{\C,\Ch\}$ and an analytic function
    $\pi:X\to S$ that is completely ramified of degree $2$
    over all points of
    $K_B$, and a covering elsewhere (compare
    \cite[Appendix E]{jackdynamicsthird} for the complete list of these
    covering maps). 
    Since $\nu$ lifts to
    a univalent line field $\tilde{\nu}$ on all of $X$, 
    it follows from Lemma \ref{lem:globalunivalent}
    that we must have $X=\C$. According to
    \cite[Appendix E]{jackdynamicsthird} this leaves only four cases:
    $S$ is the punctured plane and $\pi$ is the exponential map;
    $S$ is the plane and $\pi$ is a cosine map,
    $S$ is the sphere and $\pi$ is a Weierstra{\ss} $\wp$-function,
    or $S$ is the plane and $\pi$ is the identity. By Corollary
    \ref{cor:entireunivalent} and the assumption that $f$ is not affine,
    the last case does not occur.

   \begin{claim}
     $f$ lifts to an affine function under $\pi$.
     That is, there is an affine function $A:\C\to\C$ such that
      $f\circ\pi = \pi\circ A$.
   \end{claim}
  \begin{subproof}
   If $\infty\notin S$ (in particular, when $f$ is entire), then
    $f$ is a self-map of $S$, and hence can be lifted via the
    universal cover $\pi$ to a holomorphic function
    $\hat{f}:\C\to\C$ with $f\circ\pi = \pi\circ \hat{f}$. This lift 
    $\hat{f}$
    preserves the univalent line field $\tilde{\nu}$ on $\C$, and hence
    is affine by Corollary \ref{cor:entireunivalent}.

   So suppose that $\infty\in S$.
    The main problem is to show that $f$ has no asymptotic values in $S$. 
    To do so, recall that a univalent line field gives rise to 
    \emph{horizontal} and \emph{vertical} foliations
    (corresponding to straight horizontal and vertical lines under
    any linearizing coordinate of the line field $\mu$). A map that
    preserves the line field also preserves these foliations. Let us say that
    $a\in S$ is a \emph{leafwise asymptotic value} of $f$ if there is a
    piece $\gamma:[0,1)\to S\setminus\{\infty\}$ of a horizontal or
    vertical leaf with $\gamma(t)\to\infty$ and $f(\gamma(t))\to a$ 
    as $t\to 1$.    Since we assumed that $\infty\in S$, $f$ has at most four 
    leafwise asymptotic values of $f$ (one for each direction in
    which a horizontal or vertical leaf can approach $\infty$).

   We claim that any asymptotic value in $\wt{U}$ must be a leafwise 
    asymptotic value. Indeed, let $a\in\wt{U}$ be arbitrary and suppose
    that $a$ is not a leafwise asymptotic value. Let $Q\subset\wt{U}$ 
    be a neighborhood
    of $a$ that corresponds to a square under the linearizing coordinates
    of the linefield $\mu$ near $a$ and that contains no leafwise asymptotic
    values. (Recall that $\wt{U}$ also contains no critical values of $f$.) 

   The definition of a leafwise asymptotic value ensures that any branch
    of $f^{-1}$ at a point in $Q$ can be continued analytically along any
    vertical or horizontal leaf of the foliation in $Q$. But any germ
    that can be continued analytically along every horizontal and
    vertical line in a square can be continued analytically to the
    entire square. Hence every branch of $f^{-1}$ defined
    at a point of $Q$
    can be extended to the entire neighborhood 
    $Q$. This means that $Q$ contains no singular values of $f$.
    In particular, $a$ is not a singular value of $f$, as claimed. 

   So $f$ has at most finitely many asymptotic values. Suppose that
    $a\in S$ was an asymptotic value, and let $W\subset S$ 
    be a small simply connected neighborhood
    of $a$ that contains no other critical or asymptotic values of $f$. 
    Then every component of $f^{-1}(W)$ is either mapped to $W$ by a proper
    map with at most one critical point (in which case this component is 
    bounded), or to $W\setminus\{a\}$ by a universal covering. Since we 
    assumed that $a$ is an asymptotic value, there is at least one
    component $V$ of the latter kind. However, if $\gamma$ is 
    a curve in $W$ tending to $a$ along a (horizontal or vertical) leaf,
    then this curve has countably many preimages in $W$, all tending to 
    $\infty$. Since each of these preimages must itself be contained in 
    a leaf of the foliation, this is clearly incompatible with
    the structure of
    the line field near infinity.

   So $f$ has no asymptotic values. 
     Hence $f\circ\pi$ is a covering map branched exactly over $K_B$, with
     all branched points being of degree $2$. In other words, $f\circ\pi$ is
     a covering from its domain of definition to
     the orbifold $S$. Therefore the map $\pi:\C\to S$ lifts to a map
     $B:\C\to\C$ with $f\circ \pi \circ B = \pi$. This map $B$ must 
     then preserve the univalent line field $\pi^*(\mu)$ on $\C$, and hence
     is affine. $A=B^{-1}$ is the desired map. (Note that, in particular,
     $f$ must be a rational map.)
  \end{subproof}

To complete the proof of Theorem \ref{thm:nounivalentlinefields}, 
 it suffices to examine the three possible choices of $\pi$. If
 $\pi$ is an exponential map, then $f$ is a power map; if
 $\pi$ is a cosine map, then $f$ is a Chebyshev polynomial, and if
 $\pi$ is a $\wp$-function, then $f$ is a Latt\`es map.%
 \end{proof}

\section{Proofs of the main theorems} \label{sec:proofs}

\begin{proof}[Proof of Theorem \ref{thm:mainlinefields}]
 Let $z\in J_r(f)$ be a density point of the line field,
  and let $D$ be a disk of univalence
  as in Lemma \ref{lem:pullbacksshrink}. 
  By \cite[Theorem 5.16]{mcmullenrenormalization}, the line field is
  univalent on $D$. (Roughly speaking, the pushforward of the line field
  on $D_n$ to $D$ will be very close to a univalent line field, and the claim
  follows from a compactness argument.) 

 The claim now follows from Theorem \ref{thm:nounivalentlinefields}.
\end{proof}

For future reference, we also note the following restatement of Theorem
 \ref{thm:mainlinefields}:
 \begin{cor}[No line fields on $J_r(f)$] \label{cor:nolinefieldsradial}
  Let $f:\C\to\Ch$ be a nonconstant, nonlinear entire or meromorphic 
   function, and suppose that $f$ is not a Latt\`es map.

  Then $f$ supports no invariant line fields on its radial Julia set
   $J_r(f)$.
 \end{cor}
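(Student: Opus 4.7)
The plan is to reduce this corollary directly to Theorem~\ref{thm:mainlinefields}. Suppose for contradiction that $f$ supports an invariant line field on $J_r(f)$; by the definition of ``support'' given at the start of Section~\ref{sec:linefields}, this means there is an invariant line field $\mu$ defined on a positive-measure subset $A \subset J_r(f)$.

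First I would observe that, since $A$ has positive Lebesgue measure and $A \subset J_r(f)$, the radial Julia set $J_r(f)$ itself has positive measure, so $f$ is measurably transitive. Theorem~\ref{thm:mainlinefields}, together with the hypothesis that $f$ is not a Latt\`es map, then yields that $f$ supports no invariant line field on its full Julia set $J(f)$. But $A \subset J_r(f) \subset J(f)$ is a positive-measure set carrying the invariant line field $\mu$, contradicting this conclusion.

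The corollary is therefore a purely formal consequence of Theorem~\ref{thm:mainlinefields} combined with the definition of ``supporting an invariant line field'', and no new technical content is needed. Alternatively, and perhaps more transparently, one could rerun the proof of Theorem~\ref{thm:mainlinefields} verbatim: pick a Lebesgue density point $z$ of $\mu$ inside $A$, which already lies in $J_r(f)$, so Lemma~\ref{lem:pullbacksshrink} applies directly, without needing to invoke ergodicity (Theorem~\ref{thm:ergodic}) to relocate a density point into $J_r(f)$. The remaining steps --- pushing $\mu$ forward to a disk of univalence $D$, applying \cite[Theorem 5.16]{mcmullenrenormalization} to conclude that $\mu|_D$ is univalent, and invoking Theorem~\ref{thm:nounivalentlinefields} to classify $f$ --- all go through unchanged. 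I do not anticipate any genuine obstacle: the corollary really is a restatement of Theorem~\ref{thm:mainlinefields}.
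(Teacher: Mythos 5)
Your argument is correct and is essentially the same as the paper's proof, which simply observes the dichotomy: either $J_r(f)$ has measure zero (so nothing is supported there by definition) or $f$ is measurably transitive and Theorem~\ref{thm:mainlinefields} applies. Your contrapositive phrasing is logically identical, and your remark that one could instead rerun the proof of Theorem~\ref{thm:mainlinefields} starting from a density point already in $J_r(f)$ is also accurate but offers no real savings.
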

 \begin{proof}
  If $J_r(f)$ has measure zero, then $f$ supports no invariant line fields
   on $J_r(f)$ by definition. Otherwise, $f$ is measurably transitive,
   and the claim follows from Theorem \ref{thm:mainlinefields}. 
 \end{proof}

To begin the proof of Theorem \ref{thm:nonrecurrent}, 
 we note that, under fairly general hypotheses, the escaping set of
 a meromorphic function does not support invariant line fields. 
\begin{thm}[Invariant line fields on escaping sets] 
  \label{thm:nolinefieldsescaping}
 Let $f:\C\to\Ch$ be a transcendental entire or meromorphic function.
  Suppose that all poles of $f$ have degree at most $\Delta\in\N$, and that 
  the set $S(f)\setminus\{\infty\}$ 
  is bounded.

 Then the escaping set
    \[ I(f) := \{z\in\C: f^n(z)\to\infty\} \]
 supports no invariant line fields. 
\end{thm}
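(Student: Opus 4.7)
Since $S(f)\setminus\{\infty\}$ is bounded, we fix a neighborhood $V=\{|w|>M\}$ of $\infty$ that is disjoint from all finite singular values of $f$. Every connected component of $f^{-1}(V)$ is then either (i) a bounded neighborhood of a pole of $f$, on which $f$ is proper of degree at most $\Delta$; or (ii) an unbounded logarithmic tract, on which $f$ acts as a universal covering of $V\setminus\{\infty\}$, because such a tract contains no finite singular values. Correspondingly, for $z\in I(f)$ we eventually have $f^n(z)\in V$, and we partition $I(f)=I_{\mathrm{log}}\sqcup I_{\mathrm{pole}}$, where $I_{\mathrm{log}}$ consists of points whose forward orbit eventually stays in tracts of type (ii) and $I_{\mathrm{pole}}$ is its complement (visiting type-(i) components infinitely often).

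\textbf{Logarithmic escapes.} For $z\in I_{\mathrm{log}}$, the orbit escapes to infinity through a logarithmic singularity over $\infty$, so the theorem of B\"ottcher \cite{boettcher} cited in the introduction directly rules out any invariant line field supported on $I_{\mathrm{log}}$.

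\textbf{Pole escapes.} The remaining task is to exclude an invariant line field on $I_{\mathrm{pole}}$. Suppose for contradiction that a positive-measure set $A\subset I_{\mathrm{pole}}$ carries an invariant line field $\mu$; pick a Lebesgue density point $z_0\in A$ together with a subsequence $n_k\to\infty$ for which $f^{n_k-1}(z_0)$ lies in a pole neighborhood of a pole $p_k$ of order $d_k\leq\Delta$. The key observation is that a small spherical disk $D_k\ni f^{n_k}(z_0)$ contained in $V\setminus\{\infty\}$ pulls back through the pole passage to a single univalent disk at $f^{n_k-1}(z_0)$: one simply selects one of the $d_k$ univalent branches of the proper cover $f:U_{p_k}\setminus\{p_k\}\to V\setminus\{\infty\}$. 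The analogous univalence holds for pullbacks through log tracts (as universal covers of $V\setminus\{\infty\}$), and, because $S(f)\cap\C$ is bounded and the orbit escapes, the whole forward orbit from $f(z_0)$ up to $f^{n_k}(z_0)$ eventually avoids the critical points of $f$. Chaining the branches produces univalent pullbacks $\phi_k:D_k\to W_k\ni z_0$ of $f^{n_k}$ with uniformly Koebe-controlled distortion. A renormalization in the style of \cite[Theorem 5.16]{mcmullenrenormalization}, carried out in a chart at $\infty$ (e.g.\ $w=1/z$) so that the targets remain of macroscopic size, then forces $\mu$ to be locally univalent at $z_0$. Theorem~\ref{thm:nounivalentlinefields} supplies the contradiction: the only meromorphic maps that support a univalent invariant line field on a neighborhood of a Julia-set point are power maps, Chebyshev polynomials, and Latt\`es maps, none of which are transcendental.

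\textbf{Main obstacle.} The heart of the argument lies in the pole-escape case. The subtlety is that the spherical radii of the targets $D_k$ must shrink as $k\to\infty$, because $f^{n_k}(z_0)\to\infty$ spherically while $D_k$ must avoid $\infty$, so one has to rescale carefully in order to extract a nondegenerate linearizing coordinate for $\mu$ at $z_0$ from the univalent pullbacks. Keeping both the Koebe distortion bounds and the limit of the rescaled Beltrami differentials under control simultaneously through this rescaling is the bulk of the technical effort; once achieved, the classification of univalent-line-field maps from Section~\ref{sec:univalent} closes the argument.
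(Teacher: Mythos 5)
Your decomposition of $I(f)$ into orbits escaping through logarithmic tracts versus orbits hitting pole neighborhoods infinitely often is exactly the paper's, as is the appeal to \cite{boettcher} for the logarithmic piece. The gap is in your treatment of the pole-escape part, and it is a genuine one that the paper's argument is specifically designed to avoid.

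You try to run McMullen's renormalization directly: pull a disk $D_k\ni f^{n_k}(z_0)$, chosen \emph{inside} $V\setminus\{\infty\}$ so that the pullback is univalent, back to a density point $z_0$, and conclude $\mu$ is univalent near $z_0$. But McMullen's \cite[Theorem 5.16]{mcmullenrenormalization} relies crucially on the target being a \emph{fixed} disk $D$: the pushforwards $(f^{n_k})_*(\mu|_{W_k})$ are then all equal to $\mu|_D$, and the bounded-distortion/density-point estimate shows $\mu|_D$ is within $\eps_k\to0$ of a univalent line field on $D$, whence univalent. In your situation $f^{n_k}(z_0)\to\infty$ spherically while $D_k$ must avoid $\infty$, so the spherical radius of $D_k$ is forced to zero; the disks $D_k$ move off to infinity and are eventually pairwise disjoint. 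Rescaling each $D_k$ to unit size by a Möbius map $\psi_k$ produces a sequence of Beltramis $(\psi_k\circ f^{n_k})_*(\mu|_{W_k})$ on a model disk that does converge to a univalent line field, but this limit is a blow-up at infinity, not the restriction of $\mu$ to any fixed open set. Knowing that $\mu$ is \emph{approximately} constant on each of the disjoint, escaping disks $D_k$ is no more than the density-point property of a measurable Beltrami and gives no contradiction; you never produce an open set on which $\mu$ is actually univalent, which is what Theorem~\ref{thm:nounivalentlinefields} requires. Your own closing paragraph flags this issue but treats it as a technical nuisance rather than the missing core idea.

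The paper sidesteps the problem entirely by \emph{allowing} branching: it fixes a disk $D$ around $\infty$ \emph{containing} $\infty$, so the pullbacks $f^{n_k}\colon V_0\to D$ are proper of degree at most $\Delta$ rather than univalent. Together with a hyperbolic-diameter estimate in $D_0\setminus\{\infty\}$ (Lemma~\ref{lem:diam} applied in the universal cover), this gives the inclusion $I_R\setminus I_\ell(f)\subset J_r^{\Delta}(f)$. Then, instead of attempting a bounded-degree renormalization, the paper simply invokes Lemma~\ref{lem:JrD}: if $J_r^{\Delta}(f)$ had positive measure, almost every orbit would be dense in $\C$, forcing $\meas I(f)=0$ — contradiction. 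So $I(f)\setminus I_\ell(f)$ has measure zero and trivially carries no line field. You should replace your renormalization step with this measure-theoretic argument (or find a genuinely new way to run a blow-up renormalization at infinity, which your sketch does not supply).
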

\begin{proof}
 Let $K>0$ be sufficiently large so that $|s|<K$ for all 
  $s\in S(f)\setminus\{\infty\}$ and set
  $D_0 := \{|z|>K\}\cup\{\infty\}$.
  If $U$ is
  a component of $f^{-1}(D_0)$, then either $U$  is
  a \emph{logarithmic tract},
  i.e.\ $f:U\to D_0\setminus\{\infty\}$ is a universal covering, or
  $U$ is bounded and $f:U\to D_0$ is a branched covering with at
  most one critical point (this critical point, if it exists, being
  a multiple pole). By assumption, the degree of the map
  $f:U\to D_0$ is bounded by $\Delta$.

 It is proved in \cite{boettcher} that there are no invariant
  line fields supported on the set of escaping points 
  $z\in I(f)$ for which $f^n(z)$ is contained in a logarithmic tract 
  for all sufficiently large $n$. We will denote this set by
  $I_{\ell}(f)$. 

 For $R>0$, let us denote by $I_R$ the set of escaping points $z\in I(f)$ that
  satisfy $|f^j(z)|\geq R$ for all $j\geq 0$. We claim that, for sufficiently
  large $R$, 
  \begin{equation} I_R \setminus I_{\ell}(f)\subset J_r^{\Delta}(f).
   \label{eqn:inclusion}
  \end{equation}

 To prove this, let $D=\{|z|>2K\}\cup\{\infty\}\subset D_0$ be
  a second disk around $\infty$, and let $C=C(\Delta,1/2)$ be the
  constant from Lemma \ref{lem:diam}. In the following, we will use
  the hyperbolic metric $\dist_{D_0^*}$ in the multiply-connected
  domain $D_0^* := D_0\setminus\{\infty\}$. If $V\subset D_0^*$ is
  a simply-connected domain, we denote by $\wt{\diam}_{D_0^*}(V)$ the
  diameter of the preimage components of $V$ in the universal cover
  of $D_0^*$. More precisely, let $\tilde{V}$ be a component of
  $\exp^{-1}(V)$; then $\wt{\diam}_{D_0^*}(V)$ is the diameter of
  $\tilde{V}$ in the hyperbolic metric of the half plane $\{\re z > \log K\}$. 

 \begin{claim}
  Let $R>0$ be chosen large enough. Suppose that $|z|\geq R$ and
   $f(z)\in D$. Let $W$ be the component of $f^{-1}(D)$ containing
   $z$. 
  \begin{enumerate}
   \item Suppose that $W$ is bounded. Then $\wt{\diam}_{D_0^*}(W)\leq C$. 
     \label{item:boundedcomponents}
   \item Suppose that $W$ is unbounded, and let $V\ni f(z)$ be a simply
    connected domain with $\wt{\diam}_{D_0^*}(V)\leq C$. Then the component
    $V'$ of $f^{-1}(V)$ containing $z$ also satisfies
    $\wt{\diam}_{D_0^*}(V')\leq C$.
      \label{item:logarithmictracts}
  \end{enumerate}
 \end{claim}
 \begin{subproof}
  If $R$ is sufficiently large, then every bounded component $W'$ of 
   $f^{-1}(D_0)$ that contains a point of modulus at least $R$ is contained
   in $D_0$. (This is  because only finitely many components of 
   $f^{-1}(D_0)$ can intersect the compact set $\C\setminus D_0$.) 

  So, if $W$ is as in (\ref{item:boundedcomponents}), then the component
   $W'$ of $f^{-1}(D_0)$ that contains $W$ satisfies $W'\subset D_0$.
   By Lemma \ref{lem:diam}, the hyperbolic diameter of $W$ in $W'$
   is bounded by $C$, and the claim follows. 

  In the case of (\ref{item:logarithmictracts}), the map
   $f:W\to D$ is a universal covering. It follows from the standard
   estimate on the hyperbolic metric in a simply connected domain
   \cite[Corollary A.8]{jackdynamicsthird} 
   that the derivative of $f$ in $z$, measured in the
   hyperbolic metric of $D_0$, tends to $\infty$ as $|z|\to\infty$ in
   $W$. (Compare \cite[Lemma 1]{alexmisha} and 
   \cite[Formula (2.4)]{boettcher}.) 
   The claim follows. 
 \end{subproof}

 Suppose in the following
  that $R$ is chosen sufficiently large according to the claim,
  and additionally that every point $z$ with $|z|>R$ has 
  $\dist_{D_0^*}(z,\partial D)>C$. 

 Now, to prove (\ref{eqn:inclusion}),
  let $z\in I_R\setminus I_{\ell}(f)$. Then there are infinitely many
  $n$ such that $f^n(z)$ is not contained in a logarithmic tract. Let
  $V_{n+1} := D$, and let $V_j$, for $j=0,\dots,n$,  be inductively defined as
  the component of $f^{-1}(V_{j+1})$
  containing $f^j(z)$. By assumption, $V_n$ is bounded, and hence 
  $\wt{\diam}_{D_0^*}(V_n)\leq C$ by
  (\ref{item:boundedcomponents}). In particular, $V_n\subset D$. 

  It now follows inductively from the claim that
   $\wt{\diam}_{D_0^*}(V_j)\leq C$ for $j=n,n-1,\dots,0$. The 
   map $f^n:V_0\to V_n$ is a conformal isomorphism, so
   $f^{n+1}:V_0\to V_{n+1}$ has degree at most $\Delta$. 

 In summary, there are infinitely many $n$ such that 
  $f^{n+1}$ takes some simply connected neighborhood of $z$ to 
  $D$ as a proper map of degree at most $\Delta$. By definition, this
  means that $z\in J^{\Delta}_r(f)$, as desired. 

 Since every escaping point $z\in I(f)$ will eventually map to a point in
  $I_R$, we see that every point of 
  $z\in I(f)\setminus I_{\ell}(f)$ either belongs to 
  $J_r^{\Delta}(f)$ or is on the backward orbit of a critical point. 

  It follows that $I(f)\setminus I_{\ell}(f)$ has zero Lebesgue measure.
  Indeed, otherwise the set $J_r^{\Delta}(f)$ would have positive
  measure, and by Lemma \ref{lem:JrD} $I(f)$ then has 
  zero measure --- a contradiction. 
  This completes the proof. 
\end{proof} 
  \begin{remark}[Remark 1]
   If we do not assume that there is a bound on the
    degree of the poles of $f$, then it need no longer be true that
    $I(f)\setminus I_{\ell}(f)$ has zero Lebesgue measure. Indeed,
    Bergweiler and Kotus \cite[Theorem 1.4]{bergweilerkotus}
    construct a transcendental
    meromorphic function $f$ for which $S(f)\setminus\{\infty\}$ is bounded 
    such that 
    $\infty$ is not an asymptotic value of $f$ but $I(f)$ has positive
    measure.
  \end{remark}
  \begin{remark}[Remark 2]
   We note that the proof of Theorem~\ref{thm:nolinefieldsescaping}
    can be significantly simplified if we assume the hypotheses of Theorem 1.1
    (which is the setting in which we will apply it).
  \end{remark}
 For future reference, we also note the following generalization of
  Theorem~\ref{thm:nolinefieldsescaping}.

\begin{thm}[Invariant line fields on generalized escaping sets] 
  \label{thm:nolinefieldsescapinggeneral}
 Let $f:\C\to\Ch$ be a transcendental entire or meromorphic function,
  and let $P$ be a finite set of pre-poles of $f$, together with $\infty$,
  such that $f(P\setminus\{\infty\})\subset P$. 
  For $p\in P$, let $k_p\geq 0$ be such that $f^{k_p}(p)=\infty$, and 
  set $k := \max_p k_p$.

 Suppose that $S(f)\setminus P$ is compact. Suppose
  furthermore that the local degree of $f$ near any point of
  $f^{-1}(P)$ is uniformly bounded by $\Delta\in\N$.

 Then the set
    \[ I^P := I^P(f) := \{z\in\C: \dist^{\#}(f^n(z),P)\to 0\} \]
 supports no invariant line fields. 
\end{thm}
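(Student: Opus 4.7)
The plan is to follow the proof of Theorem~\ref{thm:nolinefieldsescaping} with $\infty$ replaced by the finite set $P$, after a preliminary observation that reduces accumulation on $P$ to frequent returns to any fixed spherical neighborhood of $\infty$. First I would observe that, since $P$ is finite, each $f^{k_p}\colon\Ch\to\Ch$ is spherically continuous at $p\in P$ with $f^{k_p}(p)=\infty$, so any orbit with $\dist^{\#}(f^n(z),P)\to 0$ admits a subsequence $(f^{m_j}(z))$ with bounded gaps $m_{j+1}-m_j\le k+1$ along which $f^{m_j}(z)\to\infty$. In particular, every $z\in I^P$ visits any sufficiently small spherical neighborhood of $\infty$ infinitely often. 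Since $S(f)\setminus P$ is compact, I would then choose small pairwise disjoint spherical disks $D_p$ around each $p\in P$ with $\mathcal{D}:=\bigcup_{p\in P} D_p$ disjoint from $S(f)\setminus P$, and, replacing $z$ by a sufficiently late iterate, assume $f^j(z)\in\mathcal{D}$ for all $j\ge 0$.

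Next I would analyse the components of $f^{-1}(\mathcal{D})$: each is either a \emph{logarithmic tract} on which $f$ is a universal covering onto $D_p\setminus\{p\}$ for some asymptotic value $p\in P$, or a \emph{bounded component} on which $f$ is a proper branched covering onto some $D_p$ of degree at most $\Delta$ (by the hypothesis on the local degree near $f^{-1}(P)$). Let $I^P_\ell\subset I^P$ consist of those $z$ whose orbit is eventually trapped in logarithmic tracts of $\mathcal{D}$. Since a tract over a finite $p\in P$ feeds, after at most $k_p$ further iterates, into a tract over $\infty$, one can deduce from \cite{boettcher} (applied to an iterate of $f$ if needed) that $I^P_\ell$ supports no invariant line field.

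It then remains to show that $I^P\setminus I^P_\ell$ has zero Lebesgue measure. For this I would mimic the proof of Theorem~\ref{thm:nolinefieldsescaping}: fix a repelling periodic orbit disjoint from $P$ and remove a sufficiently high iterated preimage, together with the closure of the forward orbit of $S(f)\setminus P$, to obtain a backward-invariant domain $\Omega\subset\Ch$ disjoint from $P$. Working with the hyperbolic metric of $\Omega$ and an ``unwound'' diameter $\wt{\diam}$ defined via the universal cover near each $p\in P$, exactly as $\wt{\diam}_{D_0^*}$ in the original proof, Lemma~\ref{lem:diam} with degree bound $\Delta$ propagates a uniform bound on $\wt{\diam}$ along the backward orbit; combined with the classification of preimage components, this shows that for $z\in I^P\setminus I^P_\ell$, infinitely many $n$ produce pullbacks of some $D_{p_{n+1}}$ to a simply connected neighborhood of $z$ with degree bounded by $N:=\Delta^{\#P}$. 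Hence $z\in J_r^N(f)$, and if this set had positive measure, Lemma~\ref{lem:JrD} would force almost every orbit in $\C$ to be dense, contradicting that orbits in $I^P$ accumulate only on $P\neq\Ch$.

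The main difficulty I expect is the hyperbolic bookkeeping in the last step when some $p\in P$ is itself a finite asymptotic value, or when the pullback chain passes successively through several $D_p$'s containing singular points of $P$: one must verify that the compounded branching remains uniformly bounded and that the ``tract portions'' of such chains are absorbed into $I^P_\ell$ by the argument of \cite{boettcher}. This case analysis is the extra technical content beyond Theorem~\ref{thm:nolinefieldsescaping}, but the overall structure parallels that proof closely.
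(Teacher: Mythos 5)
Your overall strategy---track orbits in $I^P$ via their frequent visits to neighborhoods of $\infty$, decompose preimages into tracts and bounded covers, invoke \cite{boettcher} for the tract part, and push the branching bookkeeping of Theorem~\ref{thm:nolinefieldsescaping} for the rest---is in the right spirit, but there is a concrete gap exactly where you flag a ``main difficulty,'' and the paper resolves it with an idea you do not supply. The paper does not argue directly with iterates of $f$. Instead it builds a \emph{renormalization}: choosing a small spherical disk $D_0$ around $\infty$, pulling it back to disks $D_p$ around each $p\in P\setminus\{\infty\}$ under $f^{k_p}$, and defining on $G:=\bigcup_p\bigl(\text{components of }f^{-1}(D_p)\text{ inside }D_0\bigr)$ a single map $g\colon G\to D_0$ by $g:=f^{k_p+1}$ on each $G_p$. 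This $g$ is a first-return-type map, \emph{not} an iterate of $f$, and that is the point: every component of $G$ is either a logarithmic tract of $g$ over $\infty$ or a bounded, simply connected component mapped properly with degree $\le\Delta\cdot\Delta'$ (where $\Delta'$ bounds the orders of the pre-poles in $P$). With $g$ in hand, one simply reruns the proof of Theorem~\ref{thm:nolinefieldsescaping} verbatim for $I(g)$, using that \cite{boettcher} applies to non-globally-defined maps, and then notes that a line field on a positive-measure subset of $I^P(f)$ restricts to one on a positive-measure subset of $I(g)$.

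The gap in your write-up is the parenthetical ``applied to an iterate of $f$ if needed.'' No fixed iterate of $f$ works, because the integer $k_p+1$ varies with $p$, and an orbit in $I^P$ may cycle through several different $p$'s. A tract of $f$ over a finite $p$ becomes a tract over $\infty$ only after composing with $f^{k_p}\colon D_p\setminus\{p\}\to D_0\setminus\{\infty\}$, and the exponent depends on which $p$ you just landed near; the only way to make this uniform is to pass to the piecewise-defined return map $g$. Likewise, your ``hyperbolic bookkeeping'' step, where the pullback chain threads through several $D_p$'s and you must control compounded branching and unwound hyperbolic diameters simultaneously near each $p$, is precisely what becomes trivial once the orbit is viewed under $g$: there is then just one distinguished point $\infty$ and one reference disk $D_0$, and Theorem~\ref{thm:nolinefieldsescaping} applies as written. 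So your proposal would go through once you replace ``iterate of $f$'' by the construction of $g$, but as stated that step fails.
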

 \begin{remark}
  For an example of a family where a suitable $I^P(f)$ has full measure
   but $I(f)$ has zero measure, see \cite{kotusurbanskipoles}.
 \end{remark}
\begin{proof}
 The proof is analogous to that of the previous theorem: indeed,
  we may consider it as an application of that theorem to an appropriate
  renormalization of $f$. 

 More precisely, let $D_0$ be a spherical disk around $\infty$.
  For $p\in P\setminus\{\infty\}$, let 
  $D_p$ be the component of $f^{-k_p}(D_0)$ containing $p$. If $D_0$ was
  chosen sufficiently small, then $f^{k_p}:D_p\to D_0$ is a proper
  map, unbranched except possibly over infinity, $D_p\setminus\{p\}$ 
  contains no
  singular values of $f$, and $D_p\cap D_0=\emptyset$. 

 Now let $G_p$ be the union of all components of $f^{-1}(D_p)$ that
  are contained in $D_0$, and $G:=\bigcup_p G_p$. We define
  $g:G\to D_0$ by 
    \[ g|_{G_p} := f^{k_p+1}|_{G_p}. \]
  Then every component $U$ of $G$ is simply connected, and
   either $g:U\to D_0$ is a proper map, unbranched except possibly over
   $\infty$, or $g:U\to D_0\setminus\{\infty\}$ is a universal covering. 
   Note that, in the former case, the degree of this proper map is
   uniformly bounded by $\Delta'' := \Delta\cdot \Delta'$, where
   $\Delta'$ is the maximal order of a pre-pole in $E$. 

  We can now apply the same argument as in the previous theorem
   to show that the escaping set $I(g)$ --- i.e., those orbits that
   tend to infinity under iteration --- supports no invariant
   line fields. (Note that the results of
   \cite{boettcher} do not require the function to be globally defined;
   see \cite[Corollary 4.3]{boettcher}.)

  An invariant line field for $f$ on a positive measure subset of
   $I^P$ would restrict to an invariant line field for $g$ on a positive
   measure subset of $I(g)$, so the theorem is proved.   
\end{proof}

We now prove Theorem \ref{thm:nonrecurrent}. Let us
 restate it here with the  slightly weaker hypotheses of prohibiting
 only ``bad'' wandering domains in the sense of Section \ref{sec:mane}.
 (Recall the remark
 in the introduction following Theorem \ref{thm:nonrecurrent}.)

\begin{thm}[Absence of line fields for nonrecurrent maps]
   \label{thm:nonrecurrent2}
 Let $f:\C\to\Ch$ be a nonlinear and nonconstant meromorphic function. If $f$
  is nonrecurrent and has no pre-poles of arbitrarily
  high order or bad wandering domains,
  then 
  the Julia set of $f$ supports no invariant line fields. 
\end{thm}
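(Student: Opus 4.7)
The plan is to argue by contradiction, assuming $f$ supports an invariant line field on a positive-measure subset $A\subset J(f)$. The overall strategy splits almost every orbit into two alternatives---being in $J_r(f)$, or accumulating at $\infty$---and rules out invariant line fields on each.

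First I would deploy the Mañé machinery of Section \ref{sec:mane}. Under our hypotheses, $\P(f)\cap\C$ is bounded and every singular value in it is strongly nonrecurrent (a recurrent singular value in $\C$ would arise from a recurrent critical point, excluded by the nonrecurrence assumption). Together with the exclusion of bad wandering domains, Theorem \ref{thm:maneexplicit} then implies that every non-parabolic $z_0\in J(f)\cap\C$ is regular in the sense of Definition \ref{defn:regularity}. Using Lemma \ref{lem:pullbacks}, any $z\in J(f)$ whose forward orbit remains in a fixed bounded subset of $\C$ and accumulates at a non-parabolic point lies in $J_r^{\Delta}(f)$ for some $\Delta$. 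Since $\P(f)\cap\C$ is bounded there are only finitely many parabolic cycles, and the set of Julia points attracted to them is of measure zero by a standard petal argument. Invoking Lemma \ref{lem:JrD}, I obtain the dichotomy: either $J_r(f)$ has full Lebesgue measure, or almost every orbit in $J(f)$ has $\infty$ in its $\omega$-limit set.

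In the first branch, $A\cap J_r(f)$ has positive measure, so Corollary \ref{cor:nolinefieldsradial} finishes---provided $f$ is not a Lattès map. But a Lattès map has periodic (in particular recurrent) critical points, since its critical points are the $2$-torsion points of the underlying elliptic curve, permuted by the affine lift; this contradicts the nonrecurrence hypothesis, so the Lattès case is excluded.

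It remains to treat the branch where almost every orbit accumulates at $\infty$. For the subset consisting of genuinely escaping orbits $I(f)$, Theorem \ref{thm:nolinefieldsescaping} applies directly, since $S(f)\setminus\{\infty\}$ is bounded and pole degrees are uniformly bounded by hypothesis. The remaining piece---and what I expect to be the main obstacle---is the set of $z\in J(f)\setminus I(f)$ with $\infty\in\omega(z)$, whose orbits re-enter a bounded region infinitely often. My plan is to reduce these to an escaping-set problem for a renormalized map: fix a spherical disk $D$ around $\infty$ disjoint from $\P(f)\cap\C$, identify a finite set $E$ of pre-poles on which the bounded excursions of such orbits concentrate, and apply Theorem \ref{thm:nolinefieldsescapinggeneral} with $P=\{\infty\}\cup E$, using that the results of \cite{boettcher} apply also to locally defined maps. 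An invariant line field for $f$ on this set restricts to one for the renormalization on its escaping set, which the generalized escaping-set theorem forbids. Combining all cases yields the desired contradiction.
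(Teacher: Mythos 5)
Your overall decomposition mirrors the paper's, and the first two branches are handled correctly. But the final branch—orbits that are unbounded yet non-escaping, with $\dist^{\#}(f^n(z),\P(f))\to 0$—is where your plan has a genuine gap. You propose to apply Theorem \ref{thm:nolinefieldsescapinggeneral} by choosing a finite set $E$ of pre-poles on which ``the bounded excursions concentrate,'' and setting $P=\{\infty\}\cup E$. However, the set $I^P(f)$ in that theorem consists of points whose orbits converge spherically to the \emph{finite} set $P$. The orbits you need to handle only accumulate on $\P(f)\cap\C$, which is a bounded but typically infinite (possibly uncountable) set, and there is no reason for the bounded excursion points to cluster near any finite collection of pre-poles. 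So Theorem \ref{thm:nolinefieldsescapinggeneral} does not apply to this set, and the reduction collapses. The paper's route is different and crucial here: it shows directly that such $z$ lie in $J_r^{\Delta}(f)$ for $\Delta$ the maximal pre-pole order, by taking a spherical disk $D$ about $\infty$ disjoint from $\P(f)\cap\C$, using the returns $|f^{n_k}(z)|<R$ to ensure the pullback component $D_k\ni z$ of $f^{-(n_k+1)}(D)$ is bounded, and observing that $f^{n_k+1}:D_k\to D$ is branched only over $\infty$ (since $D\cap\P(f)\cap\C=\emptyset$), hence has degree bounded by the pre-pole order. This single pullback argument, not Theorem \ref{thm:nolinefieldsescapinggeneral}, is what closes the case.

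A smaller point: your exclusion of the Latt\`es case is based on the claim that a Latt\`es map has periodic, hence recurrent, critical points because ``its critical points are the $2$-torsion points of the elliptic curve.'' This is not correct. The critical points of $f=\pi\circ A\circ\pi^{-1}$ are the points in $f^{-1}(\pi(E_2))\setminus\pi(E_2)$, which map into the forward-invariant finite set $\pi(E_2)$ after one step and never return; in particular they are \emph{pre}-periodic, not periodic, and not recurrent in the sense used here. The Latt\`es case must be excluded on other grounds (e.g.\ by restricting attention to transcendental $f$, as the abstract of the paper does), and you should check carefully whether your stated hypotheses really rule it out rather than asserting a property of Latt\`es maps that they do not have.
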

 \begin{proof}
 We know that there are no invariant line fields on 
  $J_r(f)$ and $I(f)$
  by Corollary~\ref{cor:nolinefieldsradial} and 
  Theorem~\ref{thm:nolinefieldsescaping}.
 We also know that, for all $\Delta\in\mathbb{N}$, the set
  $J_r^{\Delta}(f)\setminus J_r(f)$ has zero measure
 by Lemma \ref{lem:JrD}.
 Hence the theorem will be established once we prove 
  the following claim, which shows 
  that $I(f)$ and $J_r^{\Delta}(f)$ cover all of $J(f)$ except for
  a countable set.

 \begin{claim}
  If $z\in J(f)$ is not a pre-pole or eventually mapped to a parabolic periodic
   orbit, then either $z\in I(f)$ or
   $z\in J_r^{\Delta}(f)$ for some
   $\Delta\in \N$. 
 \end{claim}  
 \begin{subproof}   
   It suffices to consider the case where 
    $\dist^{\#}(f^n(z),\P(f))\to 0$ (otherwise, 
             we have 
    $z\in J_r(f)$ by definition). 

  First consider the case where the orbit of $z$ is unbounded, but
   $z\notin I(f)$. Set $R:=\max_{z\in\P(f)\setminus\{\infty\}}|z|+1$. We can
   choose
   a small disk $D\subset\{z\in\C:|z|>R\}\cup\{\infty\}$ around $\infty$
   such that no unbounded component of $f^{-1}(D)$
  contains points of modulus less than $R$.
  
  By assumption, there is a sequence $(n_k)$ such that
   $f^{n_k+1}(z)\in D$ and 
   $f^{n_k+1}(z)\to\infty$, but $|f^{n_k}(z)|< R$ for all $k$.
   If $D_k$ is the component of $f^{-(n_k+1)}(D)$ containing $z$, then 
   $D_k$ is bounded and 
   $f^{n_k+1}:D_k\to D$ is a branched covering map, unbranched except
   possibly over $\infty$. Furthermore, since $f$ does not have
   pre-poles of arbitrarily high order, the degree of this map is
   uniformly bounded. So $z\in J_R^{\Delta}(f)$, for a suitable $\Delta$,
   as claimed. 

  Now suppose that the orbit of $z$ is bounded, and let
   $K$ denote the $\omega$-limit set of $z$; i.e.\ $K$ is the
   set of limit points of the sequence $(f^n(z))$. 
   Since $z$ does not map
    to a parabolic periodic point by assumption, it follows that
    $K$ contains some point $z_0$ that is not a parabolic
    periodic point. (To see this, note that no parabolic
    point can be
    an isolated point of the $\omega$-limit set of $z$, and that the set of
    all parabolic points is countable and hence contains no perfect set.) 
    The point $z_0$ satisfies the assumptions of
    Theorem
    \ref{thm:maneexplicit}. Since the orbit of $z$ is bounded, it follows from
    Lemma \ref{lem:pullbacks} that $z\in J_r^{\Delta}$ for some $\Delta\in\N$.
 \end{subproof} 
\end{proof}

\bibliographystyle{hamsplain}
\small{%
\bibliography{/Latex/Biblio/biblio}
}

\end{document}